\let\margin\marginpar
\newcommand\myMargin[1]{\margin{\raggedright\scriptsize #1}}
\renewcommand{\marginpar}[1]{\myMargin{#1}}
\newtheorem{lemma}{Lemma}[section]
\newtheorem{theorem}[lemma]{Theorem}
\newtheorem{corollary}[lemma]{Corollary}
\newtheorem{prop}[lemma]{Proposition}
\theoremstyle{definition}
\newtheorem{definition}[lemma]{Definition}
\newtheorem{example}[lemma]{Example}
\newtheorem{remark}[lemma]{Remark}
\theoremstyle{remark}
\newtheorem*{proof*}{Proof}
\numberwithin{equation}{section}
\def\RHom{{\mathbf{R}\mathrm{Hom}}}
\def\D{\mathrm{D}}
\def\Hom{{\mathrm{Hom}}}
\def\Bimod{{-\mathrm{Bimod}}}
\def\CC{{\mathbb C}}
\def\cG{{\mathcal{G}}}
\def\cO{{\mathcal{O}}}
\def\cL{{\mathcal{L}}}
\def\fm{{\mathfrak{m}}}
\def\fn{{\mathfrak{n}}}
\def\fd{{\mathfrak{d}}}
\def\lg{{\langle}}
\def\rg{{\rangle}}
\def\lgg{{\langle\langle}}
\def\rgg{{\rangle\rangle}}
\def\ol{\overline}
\def\ot{{\otimes}}
\def\cot{{\widehat{\otimes}}}
\def\wh{\widehat}
\def\Lm{{\Lambda}}
\def\p{{\prime}}
\def\pp{{\prime\prime}}
\def\1{{\bf{1}}}
\def\cy{{\mathrm{cyc}}}
\def\Aut{{\mathrm{Aut}}}
\def\id{{\mathrm{Id}}}
\def\der{{\mathrm{Der}}}
\def\cder{{\mathrm{cDer}}}
\def\dder{{\mathrm{\mathbb{D}er}}}
\def\d{{\mathrm{d}}}
\def\im{{\mathrm{im}}}
\def\fD{{\mathfrak{D}}}
\def\per{{\mathrm{per}}}
\def\rJ{{\mathrm{J}}}
\def\hD{\wh{\fD}}
\title{Noncommutative Mather-Yau theorem and its applications to Calabi-Yau algebras}
\date{}
\author[1]{Zheng Hua\thanks{huazheng@maths.hku.hk}}
\author[21]{Gui-Song Zhou \thanks{zhouguisong@nbu.edu.cn}}
\affil[1]{Department of Mathematics, the University of Hong Kong, Hong Kong SAR, China}
\affil[2]{School of Mathematics and Statistics, Ningbo University, Ningbo, China}
\begin{document}
\maketitle
\begin{abstract}
In this article, we prove that for a finite quiver $Q$ the equivalence class of a potential up to formal change of variables of the complete path algebra $\wh{\CC Q}$, is determined by its Jacobi algebra together with the class in its 0-th Hochschild homology represented by the potential assuming the Jacobi algebra is finite dimensional. This is an noncommutative analogue of the famous theorem of Mather and Yau on isolated hypersurface singularities. We also prove that the right equivalence class of a potential is determined by its sufficiently high jet assuming the Jacobi algebra is finite dimensional. These two theorems can be viewed as a first step towards the singularity theory of noncommutative power series. As an application, we show that if the Jacobi algebra is finite dimensional then the corresponding complete Ginzburg dg-algebra, and the (topological) generalized cluster category thereof, are determined by the isomorphic type of the Jacobi algebra together with the class in its 0-th Hochschild homology  represented by the potential. 
\end{abstract}

\section{Introduction}
\newtheorem{maintheorem}{\bf{Theorem}}
\renewcommand{\themaintheorem}{\Alph{maintheorem}}
\newtheorem{mainconjecture}[maintheorem]{\bf{Conjecture}}
\renewcommand{\themainconjecture}{}

Let $f\in \cO_{\CC^n,0}$ be a germ of holomorphic function at $0\in \CC^n$. The \emph{Tyurina algebra} $T_f$ associate to the function $f$ is defined to be the quotient algebra $\cO_{\CC^n,0}/(f, \frac{\partial f}{\partial x_1},\ldots, \frac{\partial f}{\partial x_1})$. In the 80s, Mather and Yau proved that if $f$ has an isolated critical point then the isomorphism type of the hypersurface singularity $\{f=0\}$ is determined by the isomorphism type of $T_f$ (see Section 1 of \cite{MY}). The Mather-Yau theorem is one of the most important results in singularity theory. In the past few decades, various generalizations of it have been obtained. In \cite{GH85}, Gaffney and Hauser proved a vast generalization of the Mather-Yau theorem, where in particular the isolatedness assumption is removed. Recently, Greuel and Pham generalizes Mather-Yau theorem to the case of formal power series \cite{GP}. They also prove a weaker version of the theorem for the positive characteristic case. 

The main result of this paper is a Mather-Yau type theorem in noncommutative geometry. Before stating the theorem, we recall some background of noncommutative geometry in the context of quiver with potential. For a finite quiver $Q$, denote by $\wh{\CC Q}$ its complete path algebra (see definition in Section \ref{sec:Pre}). A \emph{potential} $\Phi$ is an element of $\wh{\CC Q}$ modulo cyclic permutation of paths, i.e. $\Phi\in \wh{\CC Q}_{\ol{\cy}}:=\wh{\CC Q}/[\wh{\CC Q},\wh{\CC Q}]^{cl}$. The \emph{Jacobi algebra} $\wh{\Lm}(Q,\Phi)$ associate to the pair $(Q,\Phi)$ is defined to be quotient algebra of $\wh{\CC Q}$ modulo the closure of the two sided ideal generated by all the cyclic derivatives of $\Phi$ (see Definition \ref{superpotential-complete}). The natural projection $\wh{\CC Q}\to \wh{\Lm}(Q,\Phi)$ induces a map $\wh{\CC Q}_{\ol{\cy}}\to \wh{\Lm}(Q,\Phi)_{\ol{\cy}}$, which sends $\Phi$ to a class $[\Phi]$. Note that $\wh{\Lm}(Q,\Phi)_{\ol{\cy}}$ is the 0-th  Hochschild homology group of $\wh{\Lm}(Q,\Phi)$ when it is finite dimensional. Two potentials $\Phi$ and $\Psi$ are called \emph{right equivalent} if they differ by a formal change of variables (see Definition \ref{right-equivalent}).

\begin{maintheorem}(Theorem \ref{ncMY})\label{mainthm-MY}
Fix a finite quiver $Q$. Let $\Phi,\Psi\in \wh{\CC Q}_{\ol{\cy}}$ be two potentials of order $\geq 3$, such that the Jacobi algebras $\wh{\Lm}(Q,\Phi)$ and $\wh{\Lm}(Q,\Psi)$ are both finite dimensional. Then the following two statements are equivalent:
\begin{enumerate}
\item[(1)] There is an $\CC Q_0$-algebra isomorphism $\gamma: \wh{\Lm}(Q,\Phi)\cong\wh{\Lm}(Q,\Psi)$ so that $\gamma_*([\Phi])=[\Psi]$, where $\gamma_*: \wh{\Lm}(Q,\Phi)_{\ol{\cy}}\to\wh{\Lm}(Q,\Psi)_{\ol{\cy}}$ is induced by $\gamma$.
\item[(2)] $\Phi$ and $\Psi$ are right equivalent.
\end{enumerate}
\end{maintheorem}
To make an analogue with the classical Mather-Yau theorem, $\Phi$ can be viewed as a formal function on the noncommutative space associate to $Q$. The Jacobi algebra is the noncommutative analogue of the Milnor algebra $\cO_{\CC^n,0}/(\frac{\partial f}{\partial x_1},\ldots, \frac{\partial f}{\partial x_1})$. The finite dimensional condition on $\wh{\Lm}(Q, \Phi)$ can be interpreted as a noncommutative version of isolatedness of singularity.  A basic example will be the quiver with one node and $n$ loops. In this case, the complete path algebra is simply the complete free algebra of $n$ generators and a potential is a formal series of necklaces. 

In recent years, the study of singularity theory of noncommutative functions has attracted more and more attention due to the motivations from algebraic geometry and representation theory. In his milestone paper \cite{Ginz}, Ginzburg defined the notion of Calabi-Yau algebra (see definitions in Section \ref{sec:CY-Ginzdg}) and provided a construction using quiver with potential. Ginzburg conjectured that all 3-dimensional Calabi-Yau algebras should arise as Jacobi algebras. This conjecture has been confirmed by Van den Bergh for all complete Calabi-Yau algebras \cite{VdB15}, while the general case has been disproved by Ben Davison \cite{Da}.

In \cite{Ginz}, Ginzburg only considered Calabi-Yau algebras that are homologically smooth, whereas Jacobi algebras are not necessarily smooth. Indeed it is more natural to define Calabi-Yau algebras under the framework of dg algebras. For every finite quiver $Q$ and potential $\Phi$, Ginzburg defined a natural complete dg-algebra $\hD(Q,\Phi)$ associating to it, where  $H^0(\hD(Q,\Phi))\cong \wh{\Lm}(Q,\Phi)$ (see Definition \ref{Ginzalg}). Keller proved that $\hD(Q,\Phi)$ always satisfies the (smooth) Calabi-Yau property, regardless of whether $\wh{\Lm}(Q,\Phi)$ is smooth or not \cite{KV09}. A geometric proof for this statement was given by Van den Bergh in the Appendix of \cite{KV09}. The result of Keller and Van den Bergh suggests that we should view $\hD(Q,\Phi)$ as a ``derived thickening" of the the Jacobi algebra $\wh{\Lm}(Q,\Phi)$. This point of view, already contained in Ginzburg's original paper, is motivated by Donaldson-Thomas theory. In Donaldson-Thomas theory, the moduli space of stable sheaves on Calabi-Yau threefold is locally isomorphic to a critical set of a function. Indeed, the moduli space admits an enhancement as a derived scheme (stack) of virtual dimension zero in the sense of \cite{BCFHR}. Although the moduli space itself is usually highly singular, the derived moduli space is a better behaved object. It is well-known that the derived structure is determined by the moduli space together with its obstruction theory.

If we make a heuristic comparison between the Jacobi algebra and the moduli space of stable sheaves in Donaldson-Thomas theory, then it is natural to ask the following question:{\bf{ to what extent the complete Ginzburg dg-algebra is determined by the algebra structure on the Jacobi algebra?}}  In this paper, we give an answer to this question using Theorem \ref{mainthm-MY}.

\begin{maintheorem}(Theorem \ref{rigidity-Ginzburg})\label{mainthm-rigidity}
Fix a finite quiver $Q$. Let $\Phi,\Psi\in \wh{\CC Q}_{\ol{\cy}}$ be two potentials of order  $\geq 3$, such that the Jacobi algebras
$\wh{\Lm}(Q,\Phi)$ and $\wh{\Lm}(Q,\Psi)$ are both finite dimensional. Assume there is an $\CC Q_0$-algebra isomorphism $\gamma: \wh{\Lm}(Q,\Phi)\to\wh{\Lm}(Q,\Psi)$  so that
$\gamma_*([\Phi])=[\Psi]$.  Then there exists a dg-$\CC Q_0$-algebra isomorphism
\[
\xymatrix{
\Gamma: \hD(Q,\Phi)\ar[r]^{\cong} &\hD(Q,\Psi)
}
\]
such that $\Gamma(t_i)=t_i$ for all nodes $i$ of $Q$. 
\end{maintheorem}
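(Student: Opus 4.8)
The plan is to deduce the theorem from the Noncommutative Mather–Yau theorem stated in the abstract (the paper's main result, say Theorem~\ref{NCMY}, which I may assume): given that the Jacobi algebras are finite dimensional and that $\gamma\colon\wh{\Lm}(Q,\Phi)\xrightarrow{\ \sim\ }\wh{\Lm}(Q,\Psi)$ is a $\CC Q_0$-algebra isomorphism with $\gamma_*([\Phi])=[\Psi]$, the Mather–Yau theorem upgrades $\gamma$ to a \emph{formal change of variables} of $\wh{\CC Q}$, i.e.\ an $\CC Q_0$-algebra automorphism-type isomorphism $\varphi\colon\wh{\CC Q}\xrightarrow{\sim}\wh{\CC Q}$ with $\varphi_*([\Phi])=[\Psi]$ in $\wh{\CC Q}_{\ol\cy}$ (equivalently $\varphi(\Phi)$ and $\Psi$ differ by a cyclically exact term, so that their cyclic derivatives generate the same ideal). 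The point of Theorem~A is then that such a $\varphi$ functorially induces a dg-isomorphism on the Ginzburg dg-algebras fixing the idempotents $t_i$.

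First I would recall the explicit description of $\hD(Q,\Phi)$: as a graded algebra it is the complete path algebra of the doubled quiver $\ol Q$ augmented with a loop $t_i$ at each vertex in degree $-2$, with the arrows $a$ of $Q$ in degree $0$, the reversed arrows $a^*$ in degree $-1$; the differential is determined by $d a^* = \partial_a\Phi$ (the cyclic derivative) and $d\bigl(\sum_i t_i\bigr) = \sum_{a}[a,a^*]$. Everything here is natural in the pair $(Q,\Phi)$ under formal changes of variables of $\wh{\CC Q}$, essentially because the cyclic-derivative ideal, the bimodule of noncommutative differentials, and the contraction/de Rham-type operators underlying Ginzburg's construction are all functorial. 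Concretely, I would extend $\varphi\colon\wh{\CC Q}\to\wh{\CC Q}$ to the degree-$0$ part and then define its action on the $a^*$'s via the transpose of the Jacobian (the induced map on $\Omega^1_{nc}$), and on the $t_i$'s as the identity; then one checks this intertwines the two differentials. This is where the condition $\varphi_*([\Phi])=[\Psi]$ is used, together with the chain rule for cyclic derivatives under a change of variables.

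The main technical point — and the step I expect to be the real obstacle — is handling the discrepancy between ``$\varphi(\Phi)=\Psi$ on the nose'' and ``$\varphi(\Phi)$ and $\Psi$ are cyclically equivalent'' (the best the Mather–Yau theorem can give, since $[\Phi]$ lives in $\wh{\CC Q}_{\ol\cy}$). Two potentials that are cyclically equivalent have the \emph{same} cyclic-derivative ideal, hence literally the same Jacobi algebra, but the Ginzburg differentials $da^*=\partial_a\Phi$ and $da^*=\partial_a\Psi$ need not be equal — only cohomologous in an appropriate sense. So I would need a lemma saying that cyclically equivalent potentials (of order $\geq 3$) produce isomorphic complete Ginzburg dg-algebras via an isomorphism fixing the $t_i$; this is a standard but slightly delicate ``integration'' argument, realizing the path from $\varphi(\Phi)$ to $\Psi$ by a one-parameter family of changes of variables and exponentiating the corresponding derivation. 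Composing this with the functorial isomorphism induced by $\varphi$ yields the desired $\Gamma$. Finally I would verify $\Gamma(t_i)=t_i$: it holds by construction at each stage, since both the functorial step (which sends $t_i\mapsto t_i$ because $\varphi$ fixes vertices and the degree-$(-2)$ generators are built from the idempotents) and the integration step are arranged to fix the degree-$(-2)$ part, using that the relation $d(\sum t_i)=\sum[a,a^*]$ is preserved because $\varphi$ is a $\CC Q_0$-algebra map.

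Secondarily, I would point out a subtlety worth isolating in its own lemma: completeness/convergence of the infinite sums defining $\varphi$ on $a^*$ and the exponential in the integration step. Since $\Phi,\Psi$ have order $\geq 3$, the Jacobians are of the form identity plus higher-order terms, so the relevant series converge in the $\fm$-adic topology; and since the Jacobi algebras are finite dimensional (hence the $\fm$-adic filtration on them is finite), all induced maps on $\hD$ are honest continuous dg-algebra homomorphisms. With these lemmas in place the theorem follows by composition, so the write-up reduces to (i) invoking the Noncommutative Mather–Yau theorem, (ii) the functoriality of Ginzburg's construction under formal changes of variables, and (iii) the cyclic-equivalence lemma — with (iii) being the part that needs genuine care.
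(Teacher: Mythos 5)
Your overall strategy coincides with the paper's: apply the noncommutative Mather--Yau theorem to produce $H\in\Aut_{\CC Q_0}(\wh{\CC Q},\wh{\fm})$ with $H(\Phi)=\Psi$, then extend $H$ to a dg-isomorphism $\Gamma$ by sending $\theta_a$ to the transpose-of-Jacobian expression $\sum_{\beta}H\bigl((\tfrac{\partial h_\beta^{-1}}{\partial a})^{\pp}\bigr)\cdot\theta_\beta\cdot H\bigl((\tfrac{\partial h_\beta^{-1}}{\partial a})^{\p}\bigr)$ and $t_i\mapsto t_i$, and verify compatibility with $\fd$ via the chain rule. Two corrections to where you locate the difficulty. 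First, the step you single out as ``the real obstacle'' is a phantom: potentials are by definition elements of $\wh{\CC Q}_{\ol{\cy}}=\wh{\CC Q}/[\wh{\CC Q},\wh{\CC Q}]^{cl}$, and cyclic derivations kill the closed commutator subspace (Lemma~\ref{cycprop}(2) and its complete analogue), so cyclically equivalent representatives give \emph{literally the same} Ginzburg differential $\fd\theta_a=\Phi_*(D_a)$ --- not merely cohomologous ones. No integration/exponentiation lemma is needed; the equality $H(\Phi)=\Psi$ in $\wh{\CC Q}_{\ol{\cy}}$ already yields $\Gamma(\fd\theta_a)=\fd(\Gamma(\theta_a))$ on the nose from the chain rule (Lemma~\ref{chain-rule}). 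Second, the genuine computation is the one you pass over with ``preserved because $\varphi$ is a $\CC Q_0$-algebra map'': showing $\Gamma(\fd t_i)=\fd t_i$, i.e.\ that $\sum_{a}e_i[a,\theta_a]e_i$ is invariant under the induced map on the degree $-1$ part. Under the identification $\hD^{-1}(Q,\Phi)\cong\wh{\dder}_l(\wh{\CC Q})$ this reduces to the identity
\[
e_{s(a)}\ot h^{-1}_a-h^{-1}_a\ot e_{t(a)}=\sum_{\beta\in Q_1}  \Bigl(\frac{\partial h_a^{-1}}{\partial \beta}\Bigr)^{\p}\ot \beta\Bigl(\frac{\partial h_a^{-1}}{\partial \beta}\Bigr)^{\pp}- \Bigl(\frac{\partial h_a^{-1}}{\partial \beta}\Bigr)^{\p} \beta \ot \Bigl(\frac{\partial h_a^{-1}}{\partial \beta}\Bigr)^{\pp},
\]
checked term by term on paths; this, together with writing down the explicit inverse $\Gamma^{-1}$ (again via the chain rule), is the bulk of the paper's argument and cannot be dismissed as formal functoriality. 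With these adjustments your plan is the paper's proof.
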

As an immediate corollary, we show that the (topological) \emph{generalized cluster category} is determined by the Jacobi algebra together with the class in its 0-th Hochschild homology represented by the potential (Corollary \ref{clustercat}). This result has some interesting application in algebraic geometry. In \cite{HuaKeller}, the first author and Keller use it to show that the singularity underlying a three dimensional flopping contraction is determined by the noncommutative deformation of the exceptional curve.

Now we briefly summarize the ideas used in the proof of Theorem \ref{mainthm-MY}.
In a series of classical papers \cite{Ma68}\cite{Ma69}, Mather studied various equivalence relations on germ of smooth and analytic functions. He has observed that the tangent spaces of these equivalence classes can be interpreted as Jacobi ideal and its variations. Moreover, Mather has found infinitesimal criteria of checking whether two germs lie in the same equivalence class. The classical Mather-Yau theorem can be proved using it.

In noncommutative differential calculus, the notion of cyclic derivative was first defined by Rota, Sagan and Stein \cite{RRS}.  It plays an essential role in the construction of Ginzburg dg algebra.
The main difficulty to implement Mather's technique on noncommutative functions is that the tangent space of a right equivalence class of functions, which is a subspace of the push forward of the space of derivations, admits no module structure. So  in noncommutative world, there is no a priori relationship between the tangent space of right equivalence class and the Jacobi ideal. We overcome this problem by observing that both space of derivations and space of cyclic derivations are quotients of the space of double derivations, which does admit a bimodule structure. Moreover, the actions of derivations and cyclic derivations on the space of potentials have the identical orbits. Another difficulty lies in the fact that the group of formal automorphisms is infinite dimensional. To reduce it to a finite dimensional problem, we establish a finite determinacy result for noncommutative formal series, which is of independent interest.
\begin{maintheorem}(Theorem 3.16)
Let $Q$ be a finite quiver and  $\Phi\in \wh{\CC Q}_{\ol{\cy}}$. If the Jacobi algebra $\wh{\Lm}(Q,\Phi)$ is finite dimensional then $\Phi$ is finitely determined (see definition in Section \ref{sec:fd}). More precisely, if
$\wh{\rJ}(Q,\Phi) \supseteq \wh{\fm}^r$ for some integer $r\geq 0$ then $\Phi$ is $(r+1)$-determined. In particular, any potential with finite dimensional Jacobi algebra is right equivalent to a potential in $\CC Q_{\cy}$.
\end{maintheorem}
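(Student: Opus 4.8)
The plan is to run the noncommutative analogue of Mather's homotopy argument (the Moser trick) for finite determinacy, feeding it with the infinitesimal description of the right-equivalence orbit developed earlier in this section. Fix a potential $\Psi\in\wh{\CC Q}_{\ol{\cy}}$ of order $\geq 3$ with the same $(r+1)$-jet as $\Phi$, so that $\Xi:=\Psi-\Phi$ has order $\geq r+2$; write $\pi\colon\wh{\CC Q}\to\wh{\CC Q}_{\ol{\cy}}$ for the canonical projection. Consider the segment $\Phi_t:=\Phi+t\,\Xi$, $t\in[0,1]$. The goal is to build a family $(\phi_t)_{t\in[0,1]}$ of continuous $\CC Q_0$-algebra automorphisms of $\wh{\CC Q}$ with $\phi_0=\id$, unipotent linear part, and $(\phi_t)_*(\Phi_0)=\Phi_t$ in $\wh{\CC Q}_{\ol{\cy}}$ for all $t$; then $\phi_1$ is a formal change of variables taking $\Phi$ to $\Psi$, so $\Phi$ is $(r+1)$-determined.

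The first step is to see that the Jacobi ideal is constant along the segment. For every arrow $a$ one has $\partial_a\Phi_t-\partial_a\Phi=t\,\partial_a\Xi\in\wh{\fm}^{r+1}=\wh{\fm}\cdot\wh{\fm}^{r}\subseteq\wh{\fm}\cdot\wh{\rJ}(Q,\Phi)$, hence every generator $\partial_a\Phi$ of $\wh{\rJ}(Q,\Phi)$ lies in $\wh{\rJ}(Q,\Phi_t)+\wh{\fm}\,\wh{\rJ}(Q,\Phi)$, so $\wh{\rJ}(Q,\Phi)\subseteq\wh{\rJ}(Q,\Phi_t)+\wh{\fm}\,\wh{\rJ}(Q,\Phi)$. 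As $\wh{\rJ}(Q,\Phi)$ is a finitely generated two-sided ideal of the $\wh{\fm}$-adically complete algebra $\wh{\CC Q}$, with generators of order $\geq 2$ (here the hypothesis that $\Phi$ has order $\geq 3$ enters), and $\wh{\rJ}(Q,\Phi_t)$ is closed, iterating this inclusion and passing to the limit --- a noncommutative complete Nakayama lemma for two-sided ideals --- yields $\wh{\rJ}(Q,\Phi)\subseteq\wh{\rJ}(Q,\Phi_t)$; in particular $\wh{\rJ}(Q,\Phi_t)\supseteq\wh{\fm}^{r}$ for every $t\in[0,1]$.

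The second, central step is the infinitesimal inclusion: for every $t$ the space $\pi(\wh{\fm}^{r+2})$ lies in the tangent space $T_{\Phi_t}:=\{\,L_\theta(\Phi_t) : \theta\in\der_{\CC Q_0}(\wh{\CC Q}),\ \theta(a)\in\wh{\fm}^{2}\ \text{for every}\ a\in Q_1\,\}$ to the right-equivalence orbit through $\Phi_t$. One uses the identity $L_\theta(\Phi_t)=\sum_{a\in Q_1}\pi\bigl(\theta(a)\,\partial_a\Phi_t\bigr)$ for the action of derivations on potentials --- its availability rests on the identification, via the space of double derivations, of the orbits of ordinary and of cyclic derivations, exactly because the naive tangent space of the right-equivalence class carries no module structure of its own. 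Now $\wh{\fm}^{r+2}=\wh{\fm}^{2}\cdot\wh{\fm}^{r}\subseteq\wh{\fm}^{2}\cdot\wh{\rJ}(Q,\Phi_t)$, and a typical element of the latter is a convergent sum of terms $u\,x\,(\partial_a\Phi_t)\,y$ with $u\in\wh{\fm}^{2}$; by cyclic invariance $\pi(u\,x\,(\partial_a\Phi_t)\,y)=\pi\bigl((y\,u\,x)\,\partial_a\Phi_t\bigr)$ with $y\,u\,x\in\wh{\fm}^{2}$. Collecting terms arrow by arrow, any element of $\pi(\wh{\fm}^{r+2})$ takes the form $\sum_{a}\pi\bigl(w_a\,\partial_a\Phi_t\bigr)$ with each $w_a$ in the corner of $\wh{\fm}^{2}$ matching the idempotents of $\partial_a\Phi_t$, i.e.\ equals $L_\theta(\Phi_t)$ for the derivation $\theta$ with $\theta(a)=w_a$. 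In particular $\Xi\in T_{\Phi_t}$ for all $t$.

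For the final step I would integrate. Choosing, continuously in $t$ --- on each finite-dimensional quotient $\wh{\CC Q}/\wh{\fm}^{N}$ this is a linear system whose data depend polynomially on $t$, and the solutions may be taken compatible as $N$ grows --- a $\CC Q_0$-derivation $\theta_t$ with $\theta_t(a)\in\wh{\fm}^{2}$ and $L_{\theta_t}(\Phi_t)=\Xi$, one solves $\frac{\d}{\d t}\phi_t=\theta_t\circ\phi_t$, $\phi_0=\id$. Since $\theta_t(\wh{\fm}^{k})\subseteq\wh{\fm}^{k+1}$, this descends to a finite-dimensional flow on each $\wh{\CC Q}/\wh{\fm}^{N}$, and these flows assemble into a family of continuous $\CC Q_0$-algebra automorphisms of $\wh{\CC Q}$ with unipotent linear part. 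The standard Moser computation gives $\frac{\d}{\d t}\bigl[(\phi_t)_*(\Phi_0)\bigr]=L_{\theta_t}\bigl((\phi_t)_*(\Phi_0)\bigr)$ in $\wh{\CC Q}_{\ol{\cy}}$, whereas $\frac{\d}{\d t}\Phi_t=\Xi=L_{\theta_t}(\Phi_t)$; thus $(\phi_t)_*(\Phi_0)$ and $\Phi_t$ both solve the non-autonomous linear equation $\dot y=L_{\theta_t}(y)$ with initial value $\Phi_0$, hence coincide, and $(\phi_1)_*(\Phi_0)=\Phi_1=\Psi$. The last assertion follows: if $\wh{\Lm}(Q,\Phi)$ is finite dimensional its radical is nilpotent, so $\wh{\fm}^{r}\subseteq\wh{\rJ}(Q,\Phi)$ for some $r\geq 2$, and then the truncation of $\Phi$ modulo $\wh{\fm}^{r+2}$ lies in $\CC Q_{\cy}$, has order $\geq 3$, and shares the $(r+1)$-jet of $\Phi$, hence is right equivalent to it. I expect the main obstacle to be the interplay of the second and fourth steps: one must know the fixed direction $\Xi$ lies in the orbit tangent space of \emph{every} $\Phi_t$ simultaneously --- which forces both the Nakayama-stability of the Jacobi ideal and the passage, through double derivations, from the module-less tangent space of the right-equivalence class to a cyclic-derivative description --- and the flow of the time-dependent derivation, together with the Moser identity, must be produced inside the completed, pro-(finite-dimensional) algebra, where convergence is guaranteed only because $\theta_t$ strictly raises the $\wh{\fm}$-adic order, with the continuity of $t\mapsto\theta_t$ a further minor point to secure.
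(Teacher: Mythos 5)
Your outline is the paper's own strategy: the homotopy (Moser) method along the segment $\Phi_t=\Phi+t\,\Xi$, reduced to the infinitesimal statement that $\Xi$ lies in the orbit tangent space at every $\Phi_t$, which you obtain from $\wh{\fm}^{r}\subseteq\wh{\rJ}(Q,\Phi_t)$ together with the cyclic-permutation identity $L_\theta(\Phi_t)=\sum_{a}\pi\bigl(\theta(a)\,(\Phi_t)_*(D_a)\bigr)$ coming from double derivations. Steps 1 and 2 are sound, and Step 1 even does a bit more than is needed: the paper applies the Nakayama lemma (Lemma \ref{Nakayama}) once, directly to $\Theta_*(\wh{\cder}^+_L(\wh{KQ}))+\wh{\fn}^{r+2}\supseteq\wh{\fn}^{r+1}$, rather than first stabilizing the Jacobi ideal. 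Two small blemishes: the theorem carries no order~$\geq 3$ hypothesis, and your Nakayama step does not need one, so the parenthetical where you claim it ``enters'' is spurious; and you should not restrict the competitor $\Psi$ to have order $\geq 3$, since $(r+1)$-determinacy quantifies over all $\Psi$ with the same $(r+1)$-jet.

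The one genuine gap is in Step 3, and you have correctly located it but not closed it: the existence of a family $\theta_t$ with $L_{\theta_t}(\Phi_t)=\Xi$ depending on $t$ well enough to be integrated. Choosing $\theta_t$ separately for each $t$ and asserting that ``the solutions may be taken compatible as $N$ grows'' is not an argument: on a fixed finite-dimensional quotient the linear system $L_{\theta}(\Phi_t)=\Xi$ has coefficients polynomial in $t$, but its rank can jump at special values of $t\in[0,1]$, and a solution rational in $t$ can acquire poles precisely there, so no continuous (let alone integrable) selection follows from general principles. The paper's device for exactly this point is to change the base ring to $K$, the algebra of entire functions of $t$, from the outset: Lemma \ref{tangent-compare-1} transfers $\Phi_*(\wh{\cder}^+_l(\wh{\CC Q}))\supseteq\wh{\fm}^{r+1}$ to $\Phi_*(\wh{\cder}^+_L(\wh{KQ}))\supseteq\wh{\fn}^{r+1}$; Nakayama over $\wh{KQ}$ then yields $\Theta_*(\wh{\cder}^+_L(\wh{KQ}))\supseteq\wh{\fn}^{r+1}$ for $\Theta=\Phi+t(\Psi-\Phi)$, producing a single derivation whose coefficients are entire in $t$ and which works for all $t$ simultaneously; Lemma \ref{integrate-derivation} then solves the resulting analytic linear ODE for the coefficients of the automorphism, and Lemma \ref{thm-localtri} is your Moser identity. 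If you replace your pointwise selection by this base-change argument (or some other uniform-in-$t$ construction of $\theta_t$), the rest of your proposal, including the deduction of the final clause from nilpotency of the radical, goes through.
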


 The third difficulty is that in the formal world not all derivations come from tangents of automorphisms. To be more precise, we are not allowed to take translation of origin as in the smooth or analytic world. We resolve this problem by proving a bootstrapping lemma on Jacobi ideals (see Proposition \ref{Bootstrapping}).

The paper is organized as follows. In Section \ref{sec:Pre}, we collect basic notations and results on noncommutative differential calculus. In particular, the definition of Jacobi algebra for path algebra (resp. complete path algebra) are presented in Definition \ref{superpotential-free} (resp. in Definition \ref{superpotential-complete}). Section \ref{sec:Proof} is devoted to the proof of Theorem \ref{mainthm-MY}.  In Section \ref{sec:CY-Ginzdg}, we recall the definitions of complete Ginzburg dg-algebras and Calabi-Yau algebras, and prove Theorem \ref{mainthm-rigidity}.

\paragraph{Acknowledgments.} We are benefit from the valuable communication with Bernhard Keller, Gert-Martin Greuel, Martin Kalck and Sasha Polishchuk.  The research of the first author was supported by RGC General Research Fund no. 17330316,  no. 17308017 and Early Career grant no. 27300214. The research of the second author was supported by NSFC grant no. 11601480 and RGC Early Career grant no. 27300214.

\section{Preliminaries}\label{sec:Pre}

In this section, we collect basic notations and terminologies that are of concern. In particular,  we recall the definition of  Jacobi algebras of path algebras and of complete path algebras of finte quivers. In addition, for completeness and reader's convenience, some relevant well-known facts on complete path algebras are presented in full details in our own notations.
Throughout, we fix a base commutative ring $k$ with unit and  a finite dimensional separable $k$-algebra $l$. Unadorned tensor products are over $k$.

\subsection{Basic terminologies}
A $l$-algebra $A$ is a $k$-algebra $A$ equipped with a $k$-algebra monomorphism $\eta: l\to A$. Note that the image of $l$ is in general not central even when $l$ is commutative. We are mainly interested in the case when $l=k e_1+\ldots +k e_n$ for central orthogonal idempotents $e_i$.


Let $A$ be an associative ($l$-)algebra. Set $A_{ij}:=e_i A e_j$.  The multiplication map $\mu:A\ot A\to A$ is equivalent with its restriction 
\[
\mu: A_{ij}\ot A_{jk}\to A_{ik}~~~~\text{for all } i,j,k=1,\ldots,n.
\]
In other words, it factors through the map $A\ot_l A\to A$, which will be denoted by the same symbol $\mu$ by an abuse of notations.
The tensor product space $A\ot A$ carries two commuting $A$-bimodule structures, called the \emph{outer} (resp. \emph{inner}) bimodule structure, defined by
\[
a(b^\p\ot b^{\p\p})c:=ab^\p\ot b^{\p\p}c~~~~~(~\text{resp.} \quad a*(b^\p\ot b^{\p\p})*c:=b^\p c\ot ab^{\p\p} ~).
\]
We denote by $A\stackrel{out}{\ot}A$ (resp $A\stackrel{in}{\ot} A$) the bimodule with respect to the outer (resp. inner) structure. Because $l$ is a sub algebra of $A$, they are in particular $l$-bimodules.

The flip map $\tau:A\stackrel{out}{\ot}A \to A\stackrel{in}{\ot}A$, defined by $a^\p\ot a^\pp$ to $a^\pp\ot a^\p$,   is an isomorphism of $A$-bimodules, $\mu:A\stackrel{out}{\ot}A \to A$ is a homomorphism of $A$-bimodules but in general $\mu: A\stackrel{in}{\ot}A \to A$ is not. Unless otherwise stated, we simply view $A\ot A$ as the bimodule $A\stackrel{out}{\ot}A$.  Also, the category of $A$-bimodules is denoted by $A\Bimod$. The multiplication map $\mu$ is $l$-linear.

A \emph{(relative) $l$-derivation of $A$ in an  $A$-bimodule $M$} is defined to be a $l$-linear map $\delta:A\to M$  satisfies the Leibniz rule, that is $\delta(ab) =\delta(a)b+ a\delta(b)$ for all $a,b\in A$.  
It follows that $\delta(l)=0$ and $\delta(A_{ij})\subset M_{ij}$ with $M_{ij}:=e_i M e_j$.
Denote  by $\der_l(A,M)$ the set of all $l$-derivations of $A$ in $M$, which naturally carries a $k$-module structure.  The elements of
\[
\der_l ( A): =\der_l(A,A) ~~~~~ (~\text{resp.} \quad \dder_l (A):= \der_l(A,A\ot A)~ )
\]
are called the  \emph{$l$-derivations of $A$} (resp. \emph{double $l$-derivations of $A$}). For a general double derivation $\delta\in \dder_l A$ and $f\in A$, we shall  write in Sweedler's notation that
\begin{align}\label{Sweedler}
\delta(f)= \delta(f)^\p\ot\delta(f)^\pp.
\end{align}
The inner bimodule structure of $A\ot A$ naturally yields a bimodule structure on $\dder_l(A)$.  In contrast, $\der_l (A)$  doesn't  have canonical left nor  right $A$-module structures in general.  The multiplication map $\mu$ induces  a $k$-linear map $\mu\circ-: \dder_l(A)\to \der_l(A)$ given by $\delta\mapsto \mu\circ \delta$. We refer to $\mu\circ \delta$ the $l$-derivation corresponding to the double $l$-derivation $\delta$.

Let us put on the space of $k$-module endomorphisms $\Hom_k(A,A)$  the $A$-bimodule structure  defined by
\[
a_1fa_2: b\mapsto a_1 f(b) a_2, ~~~~~ f\in \Hom_k(A,A), ~ a_1, a_2, b\in A.
\]
Though  the map $\dder_l(A) \xrightarrow{\mu\circ- } \Hom_k(A,A)$ doesn't preserves  bimodule structures,  the map
\[
\mu\circ \tau\circ- : \dder_l (A) \to \Hom_k(A,A)
\]
is clearly a homomorphism of $A$-bimodules. Denote by $\cder_l (A)$ the image of this map and call its elements \emph{cyclic  $l$-derivations of $A$}. For a double $l$-derivation $\delta\in \dder_l(A)$, we shall refer to $\mu\circ \tau\circ \delta$ the cyclic $l$-derivation corresponding to $\delta$. Note that by definition $\cder_l(A)$ is  an $A$-subbimodule of $\Hom_k(A,A)$, and hence is itself an $A$-bimodule.

The $A$-bimodule $\Omega_{A|l}$, of \emph{noncommutative (relative) K\"ahler $l$-differentials of $A$}, is defined to be the kernel of the multiplication map $\mu: A \ot_l A\to A$. The exterior differentiation map
\[
d: A\to \Omega_{A|l}, \quad a\mapsto d a:=1\ot a-a\ot 1
\]
is then a $l$-derivation of $A$ in the bimodule $\Omega_{A|l}$. There is a canonical isomorphism of $k$-modules
\[
\Hom_{A-{\rm Bimod}}(\Omega_{A|l},M) \xrightarrow{\cong} \der_l(A,M), \quad f\mapsto f\circ d.
\]
In another word, the exterior map $d:A\to \Omega_{A|l}$ is a universal $l$-derivation of $A$.

We collect some trivial properties on (cyclic) derivations in the following lemma.
\begin{lemma}\label{cycprop}
Let  $A$ be a $l$-algebra and fix an element $\Phi\in A_\cy:=A/[A,A]$. Let $\pi:A\to A_\cy$ be the canonical projection and $\phi\in A$ a representative of $\Phi$.
\begin{enumerate}
\item[$(1)$]  $\xi([A,A]) \subseteq [A,A]$ for every $\xi\in \der_l(A)$. Consequently, the assignment $\der_l(A) \ni \xi \mapsto \pi(\xi (\phi))$ only depends on $\Phi$ and defines a $k$-linear map   $\Phi_{\#}: \der_l (A) \to A_\cy $.
\item[$(2)$]  $D([A,A])=0$ for every $D\in \cder_l (A)$. Consequently,  the assignment $\cder_l(A)\ni D \mapsto D(\phi)$ only depends on $\Phi$ and defines an $A$-bimodule morphism $\Phi_*: \cder_l (A)\to A$.
\item[$(3)$] We have the following commutative  diagram:
\begin{align}
\xymatrix{
\dder_l (A) \ar@{->>}[r]^-{\mu\circ \tau\circ-} \ar[d]^{\mu\circ-} &   \cder_l( A)\ar[r]^-{\Phi_*} & A\ar[d]^{\pi} \\
\der_l ( A) \ar[rr]^{\Phi_{\#}} & & A_\cy .
}   \label{derivation}
\end{align}
Consequently, if $\dder_l(A) \xrightarrow{\mu\circ-} \der_l(A)$ is surjective then $\im (\Phi_{\#}) = \im (\pi\circ \Phi_*)$.
\end{enumerate}
\end{lemma}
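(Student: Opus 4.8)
The plan is to treat the three items in order, each reducing to a short computation with the Leibniz rule and Sweedler's notation \eqref{Sweedler}; no step is deep, so I will only indicate the computations and flag the one place where the outer/inner bimodule bookkeeping needs care. For $(1)$, given $\xi\in\der_l(A)$ and $a,b\in A$ the Leibniz rule gives $\xi(ab-ba)=[\xi(a),b]+[a,\xi(b)]\in[A,A]$, so $\xi([A,A])\subseteq[A,A]$; hence if $\phi,\phi'$ both represent $\Phi$ then $\xi(\phi)-\xi(\phi')=\xi(\phi-\phi')\in[A,A]$, and $\xi\mapsto\pi(\xi(\phi))$ is a well-defined $k$-linear map $\der_l(A)\to A_\cy$ depending only on $\Phi$.

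For $(2)$, I would write $D=\mu\circ\tau\circ\delta$ with $\delta\in\dder_l(A)$, which is possible because $\cder_l(A)$ is by definition the image of $\mu\circ\tau\circ-$, so that $D(c)=\delta(c)''\,\delta(c)'$ in Sweedler's notation. Expanding with the Leibniz rule for the (outer) bimodule structure on $A\ot A$,
\[
\delta(ab)=\delta(a)'\ot\delta(a)''b+a\delta(b)'\ot\delta(b)'',
\]
and then applying $\mu\circ\tau$ gives $D(ab)=\delta(a)''b\,\delta(a)'+\delta(b)''a\,\delta(b)'$. Interchanging $a$ and $b$ produces the same two summands in the opposite order, so $D(ab)=D(ba)$, hence $D([A,A])=0$. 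It follows that $D(\phi)$ depends only on $\Phi$; and since $\cder_l(A)$ carries the $A$-bimodule structure inherited from $\Hom_k(A,A)$, the identity $(a_1Da_2)(\phi)=a_1D(\phi)a_2$ shows $D\mapsto D(\phi)$ is an $A$-bimodule morphism $\Phi_*:\cder_l(A)\to A$.

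For $(3)$, the only thing to verify is the commutativity of the rectangle \eqref{derivation}: for $\delta\in\dder_l(A)$ the two composites send $\delta$ to $\pi\big((\mu\circ\tau\circ\delta)(\phi)\big)$ and to $\pi\big((\mu\circ\delta)(\phi)\big)$, and these differ by
\[
\delta(\phi)''\delta(\phi)'-\delta(\phi)'\delta(\phi)''=[\delta(\phi)'',\delta(\phi)']\in[A,A],
\]
so they agree after $\pi$. The last assertion then follows formally: $\mu\circ\tau\circ-$ is surjective onto $\cder_l(A)$, so $\im(\pi\circ\Phi_*)=\pi\big(\Phi_*((\mu\circ\tau\circ-)(\dder_l(A)))\big)$, which by commutativity equals $\Phi_{\#}\big((\mu\circ-)(\dder_l(A))\big)$, and when $\mu\circ-$ is surjective this is $\Phi_{\#}(\der_l(A))=\im(\Phi_{\#})$. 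I expect no genuine obstacle here; the only subtlety is the bookkeeping in $(2)$ — using the outer structure to apply the Leibniz rule while remembering that $D$ is assembled via the flip $\tau$ (which lands in the inner structure) before multiplying with $\mu$ — together with checking that each ``well-defined up to $[A,A]$'' claim really does hold.
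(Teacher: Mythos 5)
Your proposal is correct and follows essentially the same route as the paper: part (1) via the Leibniz rule, part (2) via the same Sweedler-notation computation showing $(\mu\circ\tau\circ\delta)([a,b])=0$, and part (3) from the observation that $(\mu\circ\delta)(\phi)-(\mu\circ\tau\circ\delta)(\phi)=[\delta(\phi)',\delta(\phi)'']\in[A,A]$. The paper's proof is merely terser (declaring (1) and the well-definedness claims "clear"); your version fills in the same details, including the correct handling of the outer-structure Leibniz rule before applying the flip.
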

\begin{proof}
Note that $A_\cy$ admits only a $k$-module structure, but not a $l$-module structure.
Part (1) of the lemma and the second statement of  part (2) are clear.  To see the first statement of part (2), let $\delta\in \dder_l(A)$ be a double $l$-derivation and $f,g\in A$. Then
\begin{align*}
(\mu\circ \tau\circ \delta)([f,g])=\delta(f)^\pp g \delta(f)^\p+\delta(g)^\pp f\delta(g)^\p-\delta(g)^\pp f\delta(g)^\p -\delta(f)^\pp g\delta(f)^\p=0.
\end{align*}
From this we know $D([A,A])=0$ for every $D\in \cder_l(A)$. Part (3) of the lemma is a direct consequence of the fact that $(\mu\delta)(f)-(\mu\tau\delta)(f)\in [A,A]$ for every $\delta\in \dder_l(A)$ and $f\in A$.
\end{proof}


\subsection{Jacobi algebras of quivers with potentials}
Let $Q$ be an arbitrary finite quiver. Denote by $Q_0$ and $Q_1$ the sets of nodes and arrows of the quiver. Define $s,t: Q_1\to Q_0$ to be the \emph{source} and \emph{target} maps. Denote by $kQ$ the path algebra of $Q$ with respect to the path concatenation defined as follows.  Let $a$ be a path from $i$ to $j$ and $b$ be a path from $j$ to $k$. Then $ab$ is a path from $i$ to $k$ defined by $a$ followed by $b$.  Denote by $e_i$ the empty path at the node $i$. Denote by $Q_1^{(ij)}$ the set of arrows with source $i$ and target $j$.
Denote by $\fm$ the two sided ideal generated by all arrows and by $l$ the subalgebra $kQ_0$ which is canonically isomorphic to the quotient algebra $kQ/\fm$. As a consequence, $kQ$ is an \emph{augmented} $l$-algebra.  An element of $kQ$ is a finite $k$-linear combination of paths. The \emph{length} or \emph{degree} $|p|$ of a path $p$ is defined in the obvious way. The ideal $\fm^r$ consists of finite sum of paths of length $\geq r$. 

It is easy to check that $l$-derivations  of $kQ$ in an $kQ$-bimodule  are uniquely determined by their value at all $a\in Q_1$. Thus $kQ$ has double derivations $\frac{\partial}{\partial a}:kQ\to kQ\ot kQ$ given by
\begin{align}\label{Di}
\frac{\partial}{\partial a}( b) = \delta_{a, b} ~e_{s(a)}\ot e_{t(a)}.
\end{align}
The $kQ$-bimodule $\dder_l (kQ)$ is generated by $\frac{\partial}{\partial a}$, $a\in Q_1$, and the map $\mu\circ-: \dder_l (kQ)\to \der_l (kQ)$ is surjective. 
By \eqref{Di}, $\mu\circ \frac{\partial}{\partial a}=0$ for any arrow $a$ such that $s(a)\neq t(a)$. 
 Denote by $D_{a}$ the cyclic $l$-derivation corresponding to  $\frac{\partial~}{\partial a}$, and so it take a path $p$ to
\[
D_{a}(p)=\sum\nolimits_{p=uav}vu
\] where $u,v$ are paths. By (2) of Lemma \ref{cycprop}, $D_a([kQ,kQ])=0$ for any $a\in Q_1$. In particular, if $p$ is a path such that $s(p)\neq t(p)$ then $D_a(p)=0$ for any $a$.

This cyclic derivation in the special case of $n$-loop quiver was first discovered by Rota, Sagan and Stein \cite{RRS}. Clearly, the $kQ$-bimodule $\cder_l (kQ)$ is generated by $D_a$, $a\in Q_1$. In addition, the $kQ$-bimodule $\Omega_{kQ|l}$ is generated by $\d a$, $a\in Q_1$.

\begin{definition}\label{superpotential-free}
Elements of $kQ_\cy:=kQ/[kQ,kQ]$ are called \emph{potentials} of $kQ$. Given a potential $\Phi\in kQ_\cy$, the two sided ideal
\[
\rJ(Q,\Phi):=\im(\Phi_*)
\]
is called the \emph{Jacobi ideal} of  $\Phi$, where $\Phi_*:\cder_l(kQ)\to kQ$ is the $kQ$-bimodule homomorphism constructed in Lemma \ref{cycprop} (2). The associative algebra
\[
\Lm(Q, \Phi) := kQ/\rJ(Q,\Phi)
\]
is called the  \emph{Jacobi algebra} of  $\Phi$. Like $kQ$, it is an augmented $l$-algebra.
\end{definition}

The above  definition of Jacobi algebras coincides with the conventional one
because $\rJ(Q,\Phi)$ is  generated by $\Phi_*(D_a)$, $a\in Q_1$, as an ideal of $kQ$. Jacobi algebras of path algebras of quivers are key objects of interest in literatures.


\subsection{Complete Jacobi algebras}

Given a finite quiver $Q$, denote by $\wh{kQ}$ the completion of $kQ$ with respect to the two sided ideal $\fm\subset kQ$.
Elements  of $\wh{kQ}$ are (infinite) formal series $\sum_{w} a_w w$, where $w$ runs over all paths (of finite length)  and $a_w\in k$. Note that $\wh{kQ}$ contains  the $kQ$ as a $l$-subalgebra. Let $\wh{\fm} \subseteq \wh{kQ}$ be the ideal generated by all arrows.  For $r\geq0$,  $\wh{\fm}^r$ consists of formal series with no terms of degree $< r$. For any subspace $U$ of $\wh{kQ}$, let $U^{cl}$ be the closure of $U$ with respect to the $\wh{\fm}$-adic topology on $\wh{kQ}$. Note that $U^{cl}= \cap_{r\geq 0 } (U+\wh{\fm}^r)$.

Clearly, $l$-derivations  of  $\wh{kQ}$  are  uniquely determined by their value at all $a\in Q_1$.  However, it is generally not true for $l$-derivations  of  $\wh{kQ}$ in an arbitrary  $\wh{kQ}$-bimodule. In particular,
the assignment (\ref{Di}) does not extend to a double derivation on $\wh{kQ} $ since its value on a general formal series will not lie in the algebraic tensor product $\wh{kQ}\ot\wh{kQ}$ which admits only finite sums.  Thus we need an alternative definition of double derivations of $\wh{kQ}$  to deal with noncommutative calculus on $\wh{kQ}$.

\begin{remark}
Different from that of $kQ$, the $\wh{kQ}$-bimodule $\Omega_{\wh{kQ}|l}$ is not generated by $d a$ for $a\in Q_1$. For example, take $Q$ to be a quiver with one node and one loop. Then $\wh{kQ}\cong k[[x]]$ and consider a formal series $\sum_{n=0}^\infty a_n x^n$ with generic coefficients. Then
\begin{align*}
d\Big(\sum_{n=0}^\infty a_n x^n\Big)&=dx\sum_{n=0}^\infty a_{n+1} x^n+xdx\sum_{n=0}^\infty a_{n+2} x^n+\ldots
\end{align*}
which can not be expressed as a finite sum of  $f\cdot dx \cdot g$ for some formal series $f,g$. It is an easy exercise to show that this can be done only when $\sum_{n=0}^\infty a_n x^n$ is a geometric series.
\end{remark}

Let $\wh{kQ}\wh{\ot} \wh{kQ}$ be the $k$-module whose elements are formal series of the form
$\sum\nolimits_{u,v} A_{u,v}~ u\ot v$, where $u,v$ runs over all paths. This is nothing but the adic completion of $\wh{kQ}\ot\wh{kQ}$ with respect to the ideal $\wh{\fm}\ot\wh{kQ}+\wh{kQ}\ot \wh{\fm}$. It contains $\wh{kQ}\ot \wh{kQ}$  as a subspace  under the identification
\[(\sum_{u} a'_u~u) \ot (\sum_{v} a''_v~v) \mapsto \sum_{u,v} a'_ua''_v~ u\ot v.\]
There are two obvious bimodule structures on $\wh{kQ} \wh{\ot} \wh{kQ}$, which we call the outer and the inner bimodule structures respectively,  extends those  on the subspace $\wh{kQ} \ot \wh{kQ}$.
Unless otherwise stated, we  view $\wh{kQ}\wh{\ot} \wh{kQ}$ as a  $\wh{kQ}$-bimodule with respect to the outer bimodule structure.   In addition, there are linear maps $\wh{\mu}: \wh{kQ}\wh{\ot} \wh{kQ}\to \wh{kQ}$ and $\wh{\tau}: \wh{kQ}\wh{\ot} \wh{kQ} \to  \wh{kQ}\wh{\ot} \wh{kQ}$ given respectively by
\[
\wh{\mu}(\sum_{u,v} a_{u,v} u\ot v) =  \sum_{w} (\sum_{w=uv} a_{u,v})~w \quad \text{and} \quad \wh{\tau} (\sum_{u,v} a_{u,v} u\ot v) = \sum_{u,v} a_{v,u} u\ot v.
\]
Clearly, $\wh{\mu}$ is a bimodule homomorphism extends $\mu$, and $\wh{\tau}$ extends $\tau$.

We call derivations of $\wh{kQ}$ in the $\wh{kQ}$-bimodule $\wh{kQ}\wh{\ot}\wh{kQ}$ \emph{double $l$-derivations} of $\wh{kQ}$. The inner bimodule structure on $\wh{kQ}\wh{\ot}\wh{kQ}$ naturally yields a bimodule structure on the space 
\[
\wh{\dder}_l(\wh{kQ}):= \der_l(\wh{kQ}, \wh{kQ}\wh{\ot}\wh{kQ}).
\]
For any  $\delta \in \wh{\dder}_l(\wh{kQ})$ and any $f\in \wh{kQ}$, we also write $\delta(f)$ in Sweedler's notation as (\ref{Sweedler}), but one shall bear in mind  that this notation is an infinite sum. Clearly, double derivations of $\wh{kQ}$ are uniquely determined by their values on all $a\in Q_1$. Thus, by abuse of notation,  we have double derivations
\[\frac{\partial~}{\partial a}: \wh{kQ} \to \wh{kQ}\wh{\ot} \wh{kQ}, ~~~~ a\mapsto \delta_{a,b}~e_{s(a)}\ot e_{t(a)},~~\text{for}~a\in Q_1\]
extending the double derivations  $\frac{\partial~}{\partial a}:kQ\to kQ\ot kQ$ constructed in (\ref{Di}). Moreover, every double derivation of $\wh{kQ}$ has a unique representation of the form
\begin{align}\label{representation-doub}
\sum_{a\in Q_1} \sum_{u,v} A_{u,v}^{(a)}~ u* \frac{\partial~}{\partial a} * v,~~~~~A_{u,v}^{(a)}\in k,
\end{align}
where $t(u)=t(a)$ and $s(v)=s(a)$, and $*$ denotes the scalar multiplication of the bimodule structure of $\wh{\dder}_l(\wh{kQ})$. The infinite sum (\ref{representation-doub}) makes sense in the obvious way. Further, note that the map $\wh{\mu}\circ-: \wh{\dder}_l(\wh{kQ}) \to \der_l(\wh{kQ})$ is surjective and  the map \[\wh{\mu} \circ \wh{\tau} \circ-: \wh{\dder}_l(\wh{kQ}) \to \Hom(\wh{kQ},\wh{kQ})\] is a bimodule homomorphism. The image of $\wh{\mu} \circ \wh{\tau} \circ-$ is denoted by $\wh{\cder}_l(\wh{kQ})$ and  its elements are called \emph{cyclic $l$-derivations} of $\wh{kQ}$. By abuse of notation, we have cyclic $l$-derivations
\[
D_{a}:= \wh{\mu} \circ \wh{\tau} \circ \frac{\partial~}{\partial a}: \wh{kQ}\to \wh{kQ}
\]
extending the cyclic derivations  $D_{a}: kQ\to kQ$.  By (\ref{representation-doub}), every cyclic $l$-derivation of $\wh{kQ}$ has a decomposition (not necessary unique) of the form
\begin{align}\label{representation-cyc}
\sum_{a\in Q_1} \sum_{u,v} A_{u,v}^{(a)}~ u\cdot D_a\cdot v,~~~~~A_{u,v}^{(a)}\in k,
\end{align}
where $t(u)=t(a)$ and $s(v)=s(a)$.

Note that all derivations and cyclic derivations of $\wh{kQ}$ are continuous with respect to the $\wh{\fm}$-adic topology on $\wh{kQ}$. Consequently,  $\xi([\wh{kQ},\wh{kQ}]^{cl}) \subseteq [\wh{kQ},\wh{kQ}]^{cl}$ for each  derivation $\xi\in \der_l(\wh{kQ})$, and $D([\wh{kQ},\wh{kQ}]^{cl})=0$ for each cyclic derivation $D\in \wh{\cder}_l(\wh{kQ})$. In addition, note that  \[\wh{\mu}(\delta(\phi)) - \wh{\mu}(\wh{\tau} (\delta(\phi))) \in [\wh{kQ},\wh{kQ}]^{cl}\] for all double derivations $\delta\in \wh{\dder}_l(\wh{kQ})$ and all formal series $\phi\in \wh{kQ}$.
Given  an element  $\Phi$ of  \[\wh{kQ}_{\ol{\cy}}:=\wh{kQ}/[\wh{kQ}, \wh{kQ}]^{cl},\]
the previous discussion implies that all its representatives $\phi$ yields the same commutative diagram
\begin{align}
\xymatrix{
\wh{\dder}_l (\wh{kQ}) \ar@{->>}[r]^-{\wh{\mu}\circ \wh{\tau}\circ-}  \ar@{->>}[d]^-{\wh{\mu}\circ-} &   \wh{\cder}_l (\wh{kQ}) \ar[r]^-{\Phi_*} & \wh{kQ}\ar[d]^{\pi} \\
\der_l (\wh{kQ}) \ar[rr]^-{\Phi_{\#}} & & \wh{kQ}_{\ol{\cy}},
}   \label{derivation+}
\end{align}
where $\pi$ is the projection, $\Phi_{\#}: \xi \mapsto \pi(\xi(\phi))$ and  $\Phi_*: D\mapsto D(\phi)$. Clearly, $\Phi_*$ is  a bimodule homomorphism, so  $\im (\Phi_*)$ is a  two sided ideal of $\wh{kQ}$. Moreover, one has
\[
(\{~ \Phi_*(D_a)~ |~ a\in Q_1~ \}) \subseteq \im(\Phi_*) \subseteq (\{~\Phi_*(D_a) ~|~a\in Q_1~ \})^{cl}.
\]
Note that both equalities hold  when $(\{~ \Phi_*(D_a)~ |~ a\in Q_1~ \})\supseteq \wh{\fm}^r$ for some $r\geq0$.

\begin{definition}\label{superpotential-complete}
Elements of $\wh{kQ}_{\ol{\cy}}:= \wh{kQ}/[\wh{kQ},\wh{kQ}]^{cl}$ are called \emph{potentials} of $\wh{kQ}$. Given a potential $\Phi\in \wh{kQ}_{\ol{\cy}}$,   the two sided ideal
\[
\wh{\rJ}(Q,\Phi):= \im(\Phi_*)
\]
is called the   \emph{Jacobi ideal} of  $\Phi$, where  $\Phi_*: \wh{\cder}_l(\wh{kQ}) \to \wh{kQ}$ is the $\wh{kQ}$-bimodule homomorphism occurs  in  Diagram (\ref{derivation+}). The associative algebra
\[
\wh{\Lm}(Q, \Phi) := \wh{kQ}/\wh{\rJ}(Q,\Phi),
\]
is called the \emph{Jacobi algebra} of   $\Phi$. The smallest integer $r\geq 0$ such that $\Phi\in \pi(\wh{\fm}^r)$ is called the \emph{order} of $\Phi$.
\end{definition}

\begin{remark}We fix a linear order on $Q_1$. Recall that two cycles $u,~v$  are {\em conjugate} if there are paths $w_1,w_2$ such that $u=w_1w_2$ and $v=w_2w_1$. Equivalent classes under this equivalence relation are called \emph{necklaces} or \emph{conjugacy classes}. Also recall that a path $u$ is \emph{lexicographically smaller} than another word $v$ if there exist factorizations $u=waw'$ and $v=wbw''$ with $a<b$. Obviously, this order relation restricts to a total order on  all necklaces. Let us call a cycle \emph{standard}  if it is maximal in its necklace. Then every potential of $kQ$ (resp. $\wh{kQ}$)  has a unique representative which is a finite linear combination (resp. formal linear combination) of  standard cycles. We shall refer to such unique representatives of potentials the \emph{canonical representative}.\end{remark}


\begin{lemma}\label{Jacobi-closed}
Fix a potential $\Phi\in \wh{kQ}_{\ol{\cy}}$. Suppose that $k$ is noetherian. Then the canonical map
\begin{equation}\label{canonical-map}
\wh{\Lm}(Q,\Phi) \to \lim_{r\to \infty} \wh{kQ}/(\wh{\rJ}(Q,\Phi)+\wh{\fm}^r)
\end{equation}
is an isomorphism of $l$-algebras. Consequently, $\wh{\rJ}(Q,\Phi)= (\{~ \Phi_*(D_a)~ |~ a\in Q_1~ \})^{cl}$.
\end{lemma}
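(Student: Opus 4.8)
The plan is to deduce both assertions from the single statement that the Jacobi ideal $\wh{\rJ}(Q,\Phi)=\im(\Phi_*)$ is \emph{closed} in the $\wh{\fm}$-adic topology, and then to prove that closedness. For the reduction, set $J_0:=\big(\{\,\Phi_*(D_a)\mid a\in Q_1\,\}\big)$, the ordinary two-sided ideal. Since $J_0\subseteq\wh{\rJ}(Q,\Phi)\subseteq J_0^{cl}$ (recorded just before the lemma), we have $\wh{\rJ}(Q,\Phi)+\wh{\fm}^r=J_0+\wh{\fm}^r$ for every $r$, so the target of the canonical map is $\varprojlim_r\wh{kQ}/(J_0+\wh{\fm}^r)$. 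Applying $\varprojlim_r$ to the short exact sequences $0\to(J_0+\wh{\fm}^r)/\wh{\fm}^r\to\wh{kQ}/\wh{\fm}^r\to\wh{kQ}/(J_0+\wh{\fm}^r)\to0$ — whose first two systems have surjective transition maps, hence vanishing $\varprojlim^1$ — one computes $\varprojlim_r\wh{kQ}/(J_0+\wh{\fm}^r)=\wh{kQ}/J_0^{cl}$, the canonical map then being the projection $\wh{kQ}/\wh{\rJ}(Q,\Phi)\twoheadrightarrow\wh{kQ}/J_0^{cl}$. Thus it is an isomorphism if and only if $\wh{\rJ}(Q,\Phi)=J_0^{cl}$, which is also the final assertion of the lemma. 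So everything reduces to the inclusion $J_0^{cl}\subseteq\im(\Phi_*)$: every element of the closed ideal generated by the $\Phi_*(D_a)$ must be realized as a convergent bimodule combination of them.

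For this I would exploit that $Q_1$ is finite. By the unique representation \eqref{representation-doub}, $\wh{\dder}_l(\wh{kQ})$ is a complete separated $\wh{kQ}$-bimodule, finitely generated as a topological bimodule; filter it by assigning $u*\frac{\partial}{\partial a}*v$ the weight $|u|+|v|+n_a$, where $n_a$ denotes the order of $\Phi_*(D_a)$, and call the filtration $F^{\bullet}$. The continuous bimodule map $\Theta:=\Phi_*\circ\wh{\mu}\circ\wh{\tau}:\wh{\dder}_l(\wh{kQ})\to\wh{kQ}$ sends $u*\frac{\partial}{\partial a}*v\mapsto u\,\Phi_*(D_a)\,v$, has image $\im(\Phi_*)$, and satisfies $\Theta(F^r)\subseteq\wh{\fm}^r$. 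Writing $\Theta_r:\wh{\dder}_l(\wh{kQ})/F^r\to\wh{kQ}/\wh{\fm}^r$ for the induced maps, surjectivity of $\wh{\mu}\circ\wh{\tau}$ gives $\im(\Theta_r)=(\im(\Phi_*)+\wh{\fm}^r)/\wh{\fm}^r=(J_0+\wh{\fm}^r)/\wh{\fm}^r$, so $\varprojlim_r\im(\Theta_r)=J_0^{cl}$. Applying $\varprojlim_r$ to $0\to\ker\Theta_r\to\wh{\dder}_l(\wh{kQ})/F^r\to\im\Theta_r\to0$ and using that $\wh{\dder}_l(\wh{kQ})$ is complete (so $\varprojlim_r$ of the middle system is $\wh{\dder}_l(\wh{kQ})$ and its $\varprojlim^1$ vanishes), the six-term exact sequence identifies the cokernel of $\Theta:\wh{\dder}_l(\wh{kQ})\to\varprojlim_r\im\Theta_r=J_0^{cl}$ with $\varprojlim^1_r\{\ker\Theta_r\}$; that is,
\[
\im(\Phi_*)=J_0^{cl}\quad\Longleftrightarrow\quad\varprojlim{}^1_r\{\ker\Theta_r\}=0 .
\]
When $k$ is a field this is automatic: each $\ker\Theta_r$ is then a finite-dimensional $k$-space, so the inverse system is trivially Mittag--Leffler. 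For a general Noetherian $k$ I would instead pass to associated graded modules over $\gr_{\wh{\fm}}\wh{kQ}=kQ$: there $\gr\Theta$ is a morphism of graded $kQ$-bimodules whose source is finitely generated with graded components that are finitely generated — hence Noetherian — $k$-modules, and an Artin--Rees-type estimate carried out degree by degree with this Noetherianity produces a constant $c$ with $\Theta^{-1}(\wh{\fm}^{r+c})\subseteq\ker\Theta+F^r$ for all $r$, which is precisely the Mittag--Leffler condition for $\{\ker\Theta_r\}$. Then $\varprojlim^1=0$, so $\im(\Phi_*)=J_0^{cl}$ and the lemma follows.

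The hard part is this last Artin--Rees step. The classical Artin--Rees lemma is not available, because $\wh{kQ}$ is almost never Noetherian as a ring — already $\wh{\CC Q}=\CC\langle\langle x,y\rangle\rangle$ for the two-loop quiver is not — and the finite-dimensionality shortcut that settles the field case breaks down; one has to use the graded picture, where finiteness of $Q_1$ yields a finitely generated graded bimodule over $kQ$ and Noetherianity of $k$ makes its homogeneous pieces Noetherian, which is what lets the Rees-module stabilization run one degree at a time. A further nuisance hidden in this step is that the generators $\Phi_*(D_a)$ need not be homogeneous, so their leading terms need not generate the leading-term ideal of $J_0$, and the degreewise bookkeeping must therefore track the higher-order parts of each $\Phi_*(D_a)$ rather than work naively with leading terms.
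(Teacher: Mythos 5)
Your reduction to the single inclusion $J_0^{cl}\subseteq\im(\Phi_*)$ is correct, and your argument is essentially a cleaner packaging of the paper's own proof: the paper likewise realizes $\wh{\rJ}(Q,\Phi)$ as the image of the continuous bimodule map $\fd:\wh{\dder}_l(\wh{kQ})\to\wh{kQ}$, $\delta\mapsto\Phi_*(\wh{\mu}\circ\wh{\tau}\circ\delta)$, truncates to an exact sequence of inverse systems, and passes to the limit, obtaining the isomorphism and the closedness of the Jacobi ideal together. Where you are sharper is in isolating the one system whose Mittag--Leffler property actually carries the content, namely $\{\ker\Theta_r\}$, and in observing that over a field this is automatic because each $\ker\Theta_r$ sits inside the finite-dimensional space $\wh{\dder}_l(\wh{kQ})/F^r$. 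Since every application in the paper has $k=\CC$, your argument completely proves the lemma in the only cases where it is used.

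For a general noetherian $k$, however, there is a genuine gap at the announced Artin--Rees estimate $\Theta^{-1}(\wh{\fm}^{r+c})\subseteq\ker\Theta+F^r$. You rightly call this the hard part, but the sketch does not establish it: $kQ$ and $\wh{kQ}$ are not noetherian rings, so no version of the Artin--Rees lemma applies to them, and noetherianity of the individual graded components of a finitely generated graded $kQ$-bimodule gives stabilization within each degree but no \emph{uniform} constant $c$ valid across all degrees, which is exactly what Mittag--Leffler demands; the non-homogeneity of the $\Phi_*(D_a)$, which you flag, makes a leading-term argument fail as well. You should know that the paper's own treatment of this case is no better: it rests on the assertion that every inverse system of finitely generated modules over a noetherian ring indexed by $\NN$ satisfies the Mittag--Leffler condition, which is false for non-artinian $k$ (take $k=\ZZ$ and the system $\ZZ\leftarrow\ZZ\leftarrow\cdots$ with all transition maps multiplication by $2$; the images $2^n\ZZ$ never stabilize). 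Both proofs are complete exactly when finitely generated $k$-modules have finite length, i.e.\ when $k$ is artinian; over an arbitrary noetherian base a genuinely new idea, not a routine Artin--Rees reduction, would be needed.
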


\begin{proof}
Since $k$ is noetherian, every  inverse system  of finitely generated $k$-modules over the direct set $(\mathbb{N},\leq)$ satisfies the Mittag-Leffler condition (See \cite[Tag 0595]{Stack} for the definition), and moreover all such inverse systems form an abelian category. Then by \cite[Tag, 0598]{Stack}, taking inverse limit is an exact functor from this category to the category of $k$-modules.

Let $\fd:\wh{\dder}_l(\wh{kQ}) \to \wh{kQ}$ be the map given by $\delta\mapsto \Phi_*(\wh{\mu}\circ \wh{\tau}\circ \delta)$. By definition, $\fd$ is an $\wh{kQ}$-bimodule homomorphism and  $\im(\fd) =\wh{\rJ}(Q,\Phi)$. Clearly, the exact sequence
\[\wh{\dder}_l(\wh{kQ})\xrightarrow{\fd} \wh{kQ} \to \wh{\Lm}(Q,\Phi)\to 0\]
induces an exact sequence of inverse system of finitely generated $k$-modules over $(\mathbb{N}, \leq)$ as follows:
\begin{equation}\label{inverse-system}
\{~\wh{\dder}_l(\wh{kQ})/ \wh{\dder}_l(\wh{kQ})_{\geq r}~\}_{r\in \mathbb{N}} \to \{~\wh{kQ}/\wh{\fm}^r~\}_{r\in \mathbb{N}} \to  \{~\wh{kQ}/(\wh{\rJ}(Q,\Phi)+\wh{\fm}^r)~\}_{r\in \mathbb{N}} \to 0,
\end{equation}
where $\wh{\dder}_l(\wh{kQ})_{\geq r}:= \sum_{i=0}^r \wh{\fm}^i * \wh{\dder}_l(\wh{kQ}) * \wh{\fm}^{r-i}$. Clearly, all the morphism are $l$-linear. 
Consider the following commutative diagram
\[
\xymatrix{
\wh{\dder}_l(\wh{kQ}) \ar[r]^-{\fd} \ar[d]^-{\eta_1}& \wh{kQ}\ar[r] \ar[d]^-{\eta_2} & \wh{kQ}/ \wh{\rJ}(Q,\Phi)  \ar[r]\ar[d]^-{\eta_3} &0\\
\lim_{r\to \infty}\wh{\dder}_l(\wh{kQ})/ \wh{\dder}_l(\wh{kQ})_{\geq r} \ar[r] & \lim_{r\to\infty} \wh{kQ}/ \wh{\fm}^r \ar[r] & \lim_{r\to \infty}\wh{kQ}/(\wh{\rJ}(Q,\Phi)+\wh{\fm}^r) \ar[r] & 0,
}
\]
where the bottom is  the limit of the sequence (\ref{inverse-system}) and $\eta_1,\eta_2,\eta_3$ are the canonical maps. By the general result mentioned in the last paragraph, the bottom sequence is also exact. It is easy to check that $\eta_1$ and $\eta_2$ is an isomorphism, so is $\eta_3$. This prove the first statement.

To see the second statement, it suffice to show $\wh{\rJ}(Q,\Phi)$ is closed with respect to the $\wh{\fm}$-adic topology. But we have $\wh{\rJ}(Q,\Phi) = \cap_{r\geq0} (\wh{\rJ}(Q,\Phi)+\wh{\fm}^r) = \wh{\rJ}(Q,\Phi)^{cl}$.
\end{proof}

\begin{remark}
In the above lemma, the statement that $\wh{\rJ}(Q,\Phi)=(\{~ \Phi_*(D_a)~ |~ a\in Q_1~ \})^{cl}$ has been claimed in \cite[Lemma 2.8]{KY11} without a proof; and the statement that the canonical map (\ref{canonical-map}) is an isomorphism is equivalent to say that the Jacobi algebra $\wh{\Lm}(Q,\Phi)$ is pseudocompact, which is actually a special case of the general result \cite[Lemma A.12]{KY11}. Nevertheless, to avoid involve too much, we give here a direct demonstration for reader's convenience.
\end{remark}

One may view potentials of $kQ$ as potentials of $\wh{kQ}$ under the canonical injection $kQ_\cy \to \wh{kQ}_{\ol{\cy}}$. Given a potential $\Phi\in kQ_{\cy}$, we have a compare homomorphism of $l$-algebras
\begin{equation*}\label{compare-map}
\Lm(Q,\Phi) \to \wh{\Lm}(Q, \Phi).
\end{equation*}

\begin{lemma}\label{superpotential-compare}
Fix  a potential $\Phi\in kQ_\cy$ of order $\geq2$. Let  $\mathfrak{a}:= \fm / \rJ(Q,\Phi)$.  Suppose that $k$ is noetherian. Then the compare map $\Lm(Q,\Phi) \to \wh{\Lm}(Q,\Phi)$ factors into a composition as follows
\[
\Lm(Q, \Phi) \to \lim_{r\to \infty} \Lm(Q, \Phi)/\mathfrak{a}^r \xrightarrow{\cong} \wh{\Lm}(Q, \Phi),
\]
where the first map is the canonical one and the second map is an $l$-algebra isomorphism. Consequently,  if $\mathfrak{a}$ is nilpotent, then the compare map $\Lm(Q,\Phi) \to \wh{\Lm}(Q, \Phi)$ is an $l$-algebra isomorphism.
\end{lemma}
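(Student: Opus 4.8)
The plan is to reduce the statement to the finite-dimensional truncations and then feed in Lemma \ref{Jacobi-closed}. Write $I\subseteq kQ$ for the two-sided ideal generated by the finite set $\{\Phi_*(D_a):a\in Q_1\}$, so that $\rJ(Q,\Phi)=I$, and write $\wh{I}\subseteq\wh{kQ}$ for the two-sided ideal of $\wh{kQ}$ generated by these \emph{same} elements (each $\Phi_*(D_a)$ is a finite sum of paths, hence lies in $kQ\subseteq\wh{kQ}$). Since $\Phi$ has order $\geq2$, each generator $\Phi_*(D_a)$ lies in $\fm$, so $I\subseteq\fm$; thus $\mathfrak{a}=\fm/I$ is a genuine ideal of $\Lm(Q,\Phi)$, one has $\mathfrak{a}^r=(\fm^r+I)/I$, and therefore $\Lm(Q,\Phi)/\mathfrak{a}^r\cong kQ/(\fm^r+I)$ for every $r\geq0$, compatibly as $r$ varies.

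Next I would observe that the canonical map $kQ\to\wh{kQ}$ induces for each $r$ an isomorphism of $l$-algebras $kQ/\fm^r\xrightarrow{\cong}\wh{kQ}/\wh{\fm}^r$ (both sides are the truncated path algebra with basis the paths of length $<r$), compatible with the structure maps of the two inverse systems. Under this isomorphism the image of $I$ is the ideal generated by the classes of the $\Phi_*(D_a)$, which is exactly the image of $\wh{I}$, whence $kQ/(\fm^r+I)\xrightarrow{\cong}\wh{kQ}/(\wh{\fm}^r+\wh{I})$. Since $\wh{\fm}^r$ is open we have $\wh{I}^{cl}=\bigcap_{s\geq0}(\wh{I}+\wh{\fm}^s)\subseteq\wh{I}+\wh{\fm}^r$, hence $\wh{I}+\wh{\fm}^r=\wh{I}^{cl}+\wh{\fm}^r=\wh{\rJ}(Q,\Phi)+\wh{\fm}^r$, the last equality being the identity $\wh{\rJ}(Q,\Phi)=\wh{I}^{cl}$ from Lemma \ref{Jacobi-closed}. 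Combining, $\Lm(Q,\Phi)/\mathfrak{a}^r\cong\wh{kQ}/(\wh{\rJ}(Q,\Phi)+\wh{\fm}^r)$ compatibly in $r$; passing to $\lim_{r\to\infty}$ and applying Lemma \ref{Jacobi-closed} once more (this is where the noetherian hypothesis on $k$ enters) identifies $\lim_{r\to\infty}\Lm(Q,\Phi)/\mathfrak{a}^r$ with $\wh{\Lm}(Q,\Phi)$. All the maps in sight are $l$-linear and natural in $r$, so a short diagram chase on generators shows that the composite $\Lm(Q,\Phi)\to\lim_{r\to\infty}\Lm(Q,\Phi)/\mathfrak{a}^r\xrightarrow{\cong}\wh{\Lm}(Q,\Phi)$ is the compare map, which is the asserted factorization. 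Finally, if $\mathfrak{a}$ is nilpotent then $\mathfrak{a}^r=0$ for $r\gg0$, so the inverse system is eventually constant, the first arrow is already an isomorphism, and hence so is the compare map.

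The only genuinely delicate point is the interchange of the two ideal operations on $\wh{kQ}$ --- ``generate, then close'' versus ``generate, then truncate mod $\wh{\fm}^r$'' --- i.e.\ the identity $\wh{\rJ}(Q,\Phi)+\wh{\fm}^r=\wh{I}+\wh{\fm}^r$; this is exactly the input that Lemma \ref{Jacobi-closed} is designed to supply, so modulo that the argument is bookkeeping. One should still verify cleanly that $\mathfrak{a}^r=(\fm^r+I)/I$ and that the truncation isomorphisms $kQ/\fm^r\cong\wh{kQ}/\wh{\fm}^r$ carry $I$ onto the image of $\wh{I}$ exactly, not merely up to closure, since it is this on-the-nose matching that lets the inverse limits be compared term by term.
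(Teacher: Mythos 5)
Your argument is correct and follows essentially the same route as the paper: both factor the compare map through $\lim_{r\to\infty}\Lm(Q,\Phi)/\mathfrak{a}^r$, identify that limit with $\lim_{r\to\infty}\wh{kQ}/(\wh{\rJ}(Q,\Phi)+\wh{\fm}^r)$, and invoke Lemma \ref{Jacobi-closed} to conclude. The only difference is that you spell out explicitly (via the identity $\wh{I}+\wh{\fm}^r=\wh{I}^{cl}+\wh{\fm}^r$ and the truncation isomorphisms) the step the paper dismisses as ``easy to check,'' namely that the compare map induces an isomorphism of the two inverse systems.
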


\begin{proof}
Let $\mathfrak{b}=\wh{\fm}/ \wh{\rJ}(Q,\Phi)$. It is easy to check that the compare map $\Lm(Q,\Phi)\to \wh{\Lm}(Q,\Phi)$ induces an isomorphism of inverse systems over $(\mathbb{N},\leq)$ of the form
\[
\{~\Lm(Q,\Phi)/\mathfrak{a}^r~\}_{r\in \mathbb{N}} \to \{~\wh{\Lm}(Q,\Phi)/\mathfrak{b}^r~\}_{r\in \mathbb{N}}.
\]
Then we have a commutative diagram of $l$-algebra homomorphisms
\[
\xymatrix{
\Lm(Q,\Phi) \ar[r] \ar[d] & \wh{\Lm}(Q,\Phi) \ar[d] \\
\lim_{r\to \infty} \Lm(Q,\Phi)/\mathfrak{a}^r \ar[r]^{\cong} & \lim_{r\to \infty} \wh{\Lm}(Q,\Phi)/\mathfrak{b}^r,
}
\]
where all maps are the natural one. By Lemma \ref{Jacobi-closed}, the map $\wh{\Lm}(Q,\Phi)\to \lim_{r\to \infty} \wh{\Lm}(Q,\Phi)/\mathfrak{b}^r$ is an isomorphism. Thereof the first statement follows. The second statement is clear.
\end{proof}

\begin{remark}
In general, the compare map $\Lm(Q,\Phi) \to \wh{\Lm}(Q, \Phi)$ is neither injective nor surjective.
It may happens that $\Lm(Q,\Phi)$ is not finitely generated  but $\wh{\Lm}(Q,\Phi)$ is as $k$-modules. Also,  it may happens  that $\Lm(Q,\Phi)$ and $\wh{\Lm}(Q,\Phi)$ are finitely generated $k$-modules of different rank. We give three toy examples below, one for each of the considerations.
\begin{enumerate}
\item[$(1)$] Take $k$ to be a field and $Q$ to be the quiver with one node and two loops, i.e. $kQ\cong k\lg x, y\rg$,  and $\Phi= \pi(\frac{1}{2}x^2 + \frac{1}{3} x^3)$.  Then $\Phi_*(D_{x}) = x+x^2$ and $\Phi_*(D_{y})=0$. In this case,  $\Lm(Q,\Phi)= k\lg x,y\rg/(x+x^2)$ and $\wh{\Lm}(Q,\Phi)= k\lgg x,y\rgg/(x)^{cl} \cong k[[y]]$.  The canonical homomorphism $k\lg x,y\rg/(x+x^2) \to k[[y]]$, which  is given by $x\mapsto 0$ and  $y\mapsto y$, has nonzero kernel contains the class of $x$  and has $k[y]$ as its image.

\item[$(2)$]  Take $k$  and $Q$ to be defined as the previous example, and $\Phi= \pi(\frac{1}{2}x^2 + \frac{1}{3} x^3 + \frac{1}{2}y^2 + \frac{1}{3} y^3)$. Then $\Phi_*(D_{x}) = x+ x^2$ and $\Phi_*(D_{y}) = y+y^2$.  It is easy to check that $\Lm(Q,\Phi) = k\lg x,y\rg/(x+x^2, y+y^2)$  is not finite dimensional  but $\wh{\Lm}(Q,\Phi) = k\lgg x,y\rgg/ (x,y)^{cl}$ is of dimension one.

\item[$(3)$]  Take $k$ to be a field and $Q$ to be the quiver with one node and one loop, i.e.  $kQ=k\lg x\rg$ and $\Phi=\pi(\frac{1}{2}x^2+\frac{1}{3}x^3)$. Again, $\Phi_*(D_{x}) = x(1+x)$. So $\Lm(Q,\Phi)$ is two diemsnional but $\wh{\Lm}(Q,\Phi)$ is one dimensional. Note that $\Lm(Q, \Phi)$ is not local.
\end{enumerate}
\end{remark}

There are some relations between Jacobi algebras of complete path algebras and that of power series algebras. Fix $i\in Q_0$. Let $k[[Q_1^{(ii)}]]$ be the power series algebra generated by $Q_1^{(ii)}$. For any 
$f\in k[[Q_1^{(ii)}]]$, we denote by $f_a$ the formal partial derivative of $f$ with respect to $a\in Q_1^{(ii)}$. The \emph{Jacobi algebra} of
$k[[Q_1^{(ii)}]]$ associated to $f$ is defined to be the commutative algebra
\[
\Lm(k[[Q_1^{(ii)}]], f):=k[[Q_1^{(ii)}]] / (f_a|a\in Q^{(ii)}_1).
\]
Let $\iota^{(i)}: \wh{kQ} \to k[[Q_1^{(ii)}]]$ be the $k$-algebra homomorphism  defined by $e_j\mapsto \delta_{ij}$ for nodes $j\in Q_0$, $a\mapsto a$ for arrows $a\in Q_1^{(ii)}$ and $a\mapsto 0$ for arrows $a\in Q_1\backslash Q_1^{(ii)}$. Clearly, $\iota^{(i)}$ factors as 
\[
\wh{kQ}\xrightarrow{\pi} \wh{kQ}_{\ol{\cy}}\xrightarrow{\ol{\iota}^{(i)}} k[[Q_1^{(ii)}]].
\]

\begin{lemma}\label{abelianization}
Fix  a node $i\in Q_0$ and a  potential $\Phi\in \wh{kQ}_{\ol{\cy}}$. We have
\[
\iota^{(i)}(\Phi_*(D_a))=\Big(\ol{\iota}^{(i)}(\Phi)\Big)_a, \quad a\in Q_1.
\]
Consequently, $\iota^{(i)}$ induces a surjective $k$-algebra homomorphism
$\wh{\Lm}(Q,\Phi)\to \Lm(k[[Q_1^{(ii)}]], \iota^{(i)}(\Phi))$.
\end{lemma}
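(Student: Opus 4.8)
The plan is to reduce the displayed identity to a computation on single paths, settle that by the ordinary Leibniz rule for partial derivatives of a commutative monomial, and then read off the surjectivity statement formally.

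First I would fix a representative $\phi\in\wh{kQ}$ of $\Phi$, so that $\Phi_*(D_a)=D_a(\phi)$ and $\ol{\iota}^{(i)}(\Phi)=\iota^{(i)}(\phi)$. Both assignments $\phi\mapsto\iota^{(i)}(D_a(\phi))$ and $\phi\mapsto(\iota^{(i)}(\phi))_a$ are continuous $k$-linear maps $\wh{kQ}\to k[[Q_1^{(ii)}]]$: the cyclic derivation $D_a$ is continuous (as recorded just before Definition \ref{superpotential-complete}), $\iota^{(i)}$ is continuous since it sends $\wh{\fm}^r$ into the $r$-th power of the augmentation ideal of $k[[Q_1^{(ii)}]]$, and formal differentiation is continuous. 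Since $kQ$ is dense in $\wh{kQ}$, it suffices to verify $\iota^{(i)}(D_a(w))=(\iota^{(i)}(w))_a$ for every path $w$, where for $a\notin Q_1^{(ii)}$ the right-hand side is read as $0$.

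Now I would split into cases. If $a\notin Q_1^{(ii)}$, the formula $D_a(\phi)=\sum_{\phi=uav}vu$ shows $\Phi_*(D_a)\in e_{t(a)}\wh{kQ}e_{s(a)}$; since $a$ is not a loop at $i$, at least one of $e_{s(a)},e_{t(a)}$ is annihilated by $\iota^{(i)}$, so $\iota^{(i)}(\Phi_*(D_a))=0$. Assume then $a\in Q_1^{(ii)}$ and let $w$ be a path. Because $\iota^{(i)}(e_j)=\delta_{ij}$, the map $\iota^{(i)}$ kills every path that is not a monomial in the loops at $i$ and sends such a monomial to the corresponding monomial of the commutative ring $k[[Q_1^{(ii)}]]$, forgetting the order of the factors. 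If $w$ is a nonempty path that is not a monomial in $Q_1^{(ii)}$, then $w$ must involve an arrow $b\in Q_1\setminus Q_1^{(ii)}$ (a path built only from loops lies at a single node, which would then have to be $i$); as $b\ne a$, this $b$ survives in $vu$ for every factorization $w=uav$, whence both sides vanish, and the empty path contributes $0$ on both sides. Finally, for $w=a_1a_2\cdots a_m$ with all $a_l\in Q_1^{(ii)}$ one has $D_a(w)=\sum_{l:\,a_l=a}a_{l+1}\cdots a_m a_1\cdots a_{l-1}$, and applying $\iota^{(i)}$ collapses this to $\sum_{l:\,a_l=a}\prod_{p\ne l}a_p$, which is exactly the Leibniz expansion of $\partial_a(a_1\cdots a_m)=(\iota^{(i)}(w))_a$. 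This proves the identity.

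For the consequence, observe that $\iota^{(i)}$ is surjective, since every commutative monomial in $Q_1^{(ii)}$ lifts to a path and $\wh{kQ}$ is complete. By the decomposition (\ref{representation-cyc}) the bimodule $\wh{\cder}_l(\wh{kQ})$ is topologically generated by the $D_a$ with $a\in Q_1$, so by Lemma \ref{Jacobi-closed} the Jacobi ideal $\wh{\rJ}(Q,\Phi)=\im(\Phi_*)$ is the closure of the ideal generated by the elements $\Phi_*(D_a)$. The identity just proved sends each of these, under the continuous homomorphism $\iota^{(i)}$, into the ideal $(f_a\mid a\in Q_1^{(ii)})$ of $k[[Q_1^{(ii)}]]$ with $f:=\ol{\iota}^{(i)}(\Phi)$ (to $f_a$ when $a$ is a loop at $i$, to $0$ otherwise), and that ideal is closed because $k[[Q_1^{(ii)}]]$ is noetherian (using, as in Lemma \ref{Jacobi-closed}, that $k$ is noetherian). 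Hence $\iota^{(i)}(\wh{\rJ}(Q,\Phi))\subseteq(f_a\mid a\in Q_1^{(ii)})$, so $\iota^{(i)}$ descends to the asserted surjection onto $\Lm(k[[Q_1^{(ii)}]],\iota^{(i)}(\Phi))$. I do not expect a real obstacle; the only points demanding care are the case analysis for non-loop arrows and for paths not supported on the loops at $i$, together with the observation that abelianizing a cyclic derivative turns it into an ordinary partial derivative precisely because the distinct cyclic rotations of $w$ produced by $D_a$ — one per occurrence of $a$ — all become the single monomial appearing in $\partial_a$.
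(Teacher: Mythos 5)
Your proof is correct and follows essentially the same route as the paper's: reduce to single paths by linearity/continuity, observe that cycles involving an arrow outside $Q_1^{(ii)}$ are killed on both sides, and check that for a cycle in the loops at $i$ the cyclic derivative abelianizes to the ordinary partial derivative. Your treatment of the non-loop arrows and of the descent to the quotient (via Lemma \ref{Jacobi-closed} and closedness of ideals in the noetherian power series ring) is more explicit than the paper's, but it is the same argument.
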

\begin{proof}
If a cycle $w$ contains an arrow $a\notin Q_1^{(ii)}$, then both sides of the equation (with $\Phi$ replaced by $w$) vanish by the definition of $\iota^{(i)}$.
For an cycle $w=b_1\ldots b_r$ with $b_s\in Q_1^{(ii)}$, one can readily check that
\[
\iota^{(i)}\big(D_a(w)\big)=\sum_{\{s|b_s=a\}}\iota^{(i)}\big(b_{s+1}\ldots b_rb_1\ldots b_{s-1}\big)=\iota^{(i)}(w)_{a}.
\]
Since  the maps $\iota^{(i)}$, $D_a$ and $(-)_{a}$ all  commute with taking formal sums, the result follows.
\end{proof}


\subsection{Some technical results on complete path algebras}

\begin{lemma}[Chain rule]\label{chain-rule}
Let $Q$ and $Q'$ be two finite quivers. Let $H: \wh{kQ}\to \wh{kQ'}$ be an algebra homomorphism such that $H(\{ e_i ~ |~ i \in Q_0 \}) \subseteq \{ e_j ~|~ j\in Q_0' \}$.
Let $h_a:= H(a)$ for $a\in Q_1$. Then for any $\phi\in \wh{kQ}$ and $\beta\in Q'_1$ we have
\[
\frac{\partial}{\partial \beta}\big(H(\phi)\big)=\sum_{a\in Q_1} \bigg(\frac{\partial h_a}{\partial \beta}\bigg)^\pp * (H\wh{\ot}H) (\frac{\partial \phi}{\partial a})*  \bigg(\frac{\partial h_a}{\partial \beta}\bigg)^\p
\]
with $*$ means the scalar multiplication with respect to the inner bimodule structure on $\wh{kQ'}\wh{\ot} \wh{kQ'}$ and $H\wh{\ot}H: \wh{kQ}\wh{\ot} \wh{kQ} \to \wh{kQ'}\wh{\ot} \wh{kQ'}$ is given by  $u\ot v\mapsto H(u)\ot H(v)$. Consequently,
\begin{equation}\label{chainrule2}
D_{\beta}(H(\phi)) = \sum_{a\in Q_1}  (\frac{\partial h_a}{\partial \beta})^\pp \cdot  H (D_{a} \phi) \cdot (\frac{\partial h_a}{\partial \beta})^\p.
\end{equation}
Here we used the  Sweedler's notation for the double derivation $\frac{\partial~}{\partial \beta}$ acting on $h_a$. Note that because a double derivation takes value in $\wh{kQ'}\cot\wh{kQ'}$, the Sweedler's notation is an infinite sum.\end{lemma}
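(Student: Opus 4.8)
The plan is to prove the first displayed formula by reducing to paths and inducting on their length, and then to deduce \eqref{chainrule2} by applying $\wh{\mu}\circ\wh{\tau}$. Write $L(\phi):=\frac{\partial}{\partial\beta}\big(H(\phi)\big)$ and let $R(\phi)$ denote the right hand side of the first formula; both are $k$-linear maps $\wh{kQ}\to\wh{kQ'}\wh{\ot}\wh{kQ'}$. All maps entering their definitions — the algebra homomorphism $H$, the double derivation $\frac{\partial}{\partial\beta}$, the map $H\wh{\ot}H$, and left and right scalar multiplication by a fixed element for the inner bimodule structure — are continuous for the relevant $\wh{\fm}$-adic topologies, and the sum over $a\in Q_1$ is finite; hence $L$ and $R$ are continuous. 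Since $kQ$ is dense in $\wh{kQ}$, it therefore suffices to prove $L(p)=R(p)$ for every path $p$ of $Q$ (including the trivial paths $e_i$). I will use three elementary bookkeeping facts: (i) $\frac{\partial}{\partial a}$ and $\frac{\partial}{\partial\beta}$ are $l$-derivations for the \emph{outer} bimodule structure, so in particular they annihilate all vertex idempotents; (ii) since $H$ is an algebra homomorphism, $H\wh{\ot}H$ carries the outer $\wh{kQ}$-action on $\wh{kQ}\wh{\ot}\wh{kQ}$ to the outer $\wh{kQ'}$-action along $H$, i.e. $(H\wh{\ot}H)(f\,\omega\,g)=H(f)\big((H\wh{\ot}H)\omega\big)H(g)$; and (iii) the inner and outer bimodule structures commute, so the operation $\omega\mapsto\big(\tfrac{\partial h_a}{\partial\beta}\big)\pp*\omega*\big(\tfrac{\partial h_a}{\partial\beta}\big)\p$ (scalar multiplication in the inner structure) is a morphism of outer bimodules.

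For a trivial path $e_i$ both sides vanish: $H(e_i)$ is a vertex idempotent $e_j$, so $L(e_i)=\frac{\partial}{\partial\beta}(e_j)=0$ by (i), while $\frac{\partial e_i}{\partial a}=0$ for every $a$ so $R(e_i)=0$. For the base case $\phi=\beta_0\in Q_1$ a single arrow, only the summand $a=\beta_0$ contributes to $R(\beta_0)$ since $\frac{\partial\beta_0}{\partial a}=\delta_{a,\beta_0}\,e_{s(\beta_0)}\ot e_{t(\beta_0)}$; writing $e_{j_1}:=H(e_{s(\beta_0)})$, $e_{j_2}:=H(e_{t(\beta_0)})$ and unwinding the inner scalar multiplication gives
\[
R(\beta_0)=e_{j_1}\Big(\tfrac{\partial h_{\beta_0}}{\partial\beta}\Big)\p\ot\Big(\tfrac{\partial h_{\beta_0}}{\partial\beta}\Big)\pp e_{j_2}.
\]
On the other hand $h_{\beta_0}=H(\beta_0)=H(e_{s(\beta_0)})\,h_{\beta_0}\,H(e_{t(\beta_0)})=e_{j_1}h_{\beta_0}e_{j_2}$, and applying $\frac{\partial}{\partial\beta}$ together with (i) gives $\frac{\partial h_{\beta_0}}{\partial\beta}=e_{j_1}\tfrac{\partial h_{\beta_0}}{\partial\beta}e_{j_2}$ for the outer structure; hence $R(\beta_0)=\frac{\partial h_{\beta_0}}{\partial\beta}=L(\beta_0)$.

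For the inductive step, write a path of length $\geq 2$ as $p=aq$ with $a\in Q_1$ and $q$ a shorter path. The Leibniz rule for $\frac{\partial}{\partial\beta}$ and the multiplicativity of $H$ give $L(aq)=H(a)\,L(q)+L(a)\,H(q)$ (outer action). Symmetrically, applying the Leibniz rule for each $\frac{\partial}{\partial a'}$ to $\frac{\partial(aq)}{\partial a'}=a\,\frac{\partial q}{\partial a'}+\frac{\partial a}{\partial a'}\,q$, then fact (ii), then fact (iii), yields $R(aq)=H(a)\,R(q)+R(a)\,H(q)$. Combining the base case $L(a)=R(a)$ with the inductive hypothesis $L(q)=R(q)$ gives $L(p)=R(p)$. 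This proves the first formula on $kQ$, and hence on all of $\wh{kQ}$ by density and continuity.

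Finally, apply $\wh{\mu}\circ\wh{\tau}$ to the identity just established. On the left it produces $D_\beta\big(H(\phi)\big)$, by definition of $D_\beta$. On the right, two observations finish the computation: first, $\wh{\mu}\circ\wh{\tau}$ turns an inner sandwich into an outer sandwich, $\wh{\mu}\wh{\tau}\big(y*\omega*z\big)=y\,\big(\wh{\mu}\wh{\tau}\,\omega\big)\,z$ (check on $\omega=u\ot v$ and extend by continuity); second, since $H$ is an algebra homomorphism, $\wh{\mu}\circ\wh{\tau}\circ(H\wh{\ot}H)=H\circ\wh{\mu}\circ\wh{\tau}$ (again check on simple tensors), so $\wh{\mu}\wh{\tau}\big((H\wh{\ot}H)(\tfrac{\partial\phi}{\partial a})\big)=H\big(\wh{\mu}\wh{\tau}\tfrac{\partial\phi}{\partial a}\big)=H(D_a\phi)$. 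Substituting these into the term-by-term image of the right hand side gives exactly $\sum_{a\in Q_1}\big(\tfrac{\partial h_a}{\partial\beta}\big)\pp\cdot H(D_a\phi)\cdot\big(\tfrac{\partial h_a}{\partial\beta}\big)\p$, which is \eqref{chainrule2}. I do not anticipate a genuine obstacle here: the content is purely formal, and the only point requiring care is keeping the inner and outer bimodule structures — and the behaviour of $H\wh{\ot}H$, $\wh{\mu}$ and $\wh{\tau}$ with respect to them — rigorously straight, together with the routine continuity and density arguments needed to pass from $kQ$ to $\wh{kQ}$.
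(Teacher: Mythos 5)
Your proof is correct and follows essentially the same route as the paper: reduce to paths by linearity/continuity, verify the formula on paths, and deduce \eqref{chainrule2} via the identity $\wh{\mu}\circ\wh{\tau}\circ(H\wh{\ot}H)=H\circ\wh{\mu}\circ\wh{\tau}$. The only difference is that you make explicit (by induction on path length, tracking the inner/outer bimodule structures) the single-path case that the paper simply asserts in its second displayed equality.
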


\begin{proof}
Suppose $\phi=\sum_w A_ww$, where $w$ runs over all paths of $Q$. Then we  have
\begin{eqnarray*}
\frac{\partial}{\partial \beta}\big(H(\phi)\big) &=& \sum\nolimits_w A_w~ \frac{\partial}{\partial \beta}\big( H(w)\big) \\
           &=& \sum_w A_w~ \sum_{a\in Q_1} (\frac{\partial h_a}{\partial \beta})^\pp * (H\wh{\ot}H)\big(\frac{\partial w}{\partial a}\big) * (\frac{\partial h_a}{\partial \beta})^\p\\
           &=& \sum_{a\in Q_1}  (\frac{\partial h_a}{\partial \beta})^\pp * (H\wh{\ot}H) \big (\sum_{w} A_w~ \frac{\partial w}{\partial a} \big )* (\frac{\partial h_a}{\partial \beta})^\p\\
            &=& \sum_{a\in Q_1}  (\frac{\partial h_a}{\partial \beta})^\pp *  (H\wh{\ot}H) \big ( \frac{\partial \phi}{\partial a}\big) * (\frac{\partial h_a}{\partial \beta})^\p,
\end{eqnarray*}
as required. Since $\wh{\mu}\circ \wh{\tau}\circ (H\wh{\ot}H) = H\circ \wh{\mu} \circ \wh{\tau}$, the desired formula for cyclic derivations follows immediately from the justified formula for double derivations.
\end{proof}

The next result (for free algebras) was stated  in \cite[Proposition 1.5.13]{Ginz} without a proof.

\begin{lemma}[Poincare lemma]\label{Poincare}
Let $Q$ be a finite quiver.
Fix $f_a \in e_{s(a)}\cdot \wh{kQ}\cdot e_{t(a)}$ 
for each $a\in Q_1$.  Suppose $k$ contains a subring which is a field of characteristic $0$. Then
\[
\sum_{a\in Q_1} [a,f_a]=0 \Longleftrightarrow  \exists~ \phi\in \wh{kQ} \text{ such that } f_a=D_a(\phi), ~\text{for all}~ a\in Q_1.
\]
\end{lemma}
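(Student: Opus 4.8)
The plan is to prove both implications by reducing to a graded/term-by-term computation. The direction $(\Leftarrow)$ is the easy one: if $f_a = D_a(\phi)$ for all $a$, then I want to show $\sum_{a\in Q_1}[a,f_a]=0$. Writing $\phi=\sum_w A_w w$ as a formal sum of cycles, by continuity of all the maps involved it suffices to check this for a single cycle $w=b_1\cdots b_r$ (with $b_i\in Q_1$, $s(w)=t(w)$). Then $D_a(w)=\sum_{\{i:b_i=a\}} b_{i+1}\cdots b_r b_1\cdots b_{i-1}$, so $[a, D_a(w)] = \sum_{\{i:b_i=a\}}\big(b_i b_{i+1}\cdots b_r b_1\cdots b_{i-1} - b_{i+1}\cdots b_r b_1\cdots b_{i-1} b_i\big)$. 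Summing over all $a\in Q_1$ amounts to summing over all cyclic rotations of $w$: the term $b_i\cdots b_{i-1}$ appears once with a $+$ sign (from position $i$) and once with a $-$ sign (from position $i-1$), so the whole sum telescopes to zero. Hence $\sum_a[a,f_a]=0$ for each $w$, and by continuity for all $\phi$.

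For the direction $(\Rightarrow)$, the strategy is to construct $\phi$ by an explicit ``integration'' operator and then verify it works, degree by degree. Given a homogeneous element $g$ of degree $r\geq 1$ that is a path from $i$ to $i$ (a cycle of length $r$), define $\mathcal{I}(g) := \tfrac{1}{r}\sum_{w}(\text{coefficient of }w\text{ in }g)\cdot w$ summed over cycles — more precisely, the candidate primitive is $\phi := \sum_{a\in Q_1} \tfrac{1}{?}\,(\text{reassemble }a\cdot f_a)$ with appropriate weights so that each cycle of length $r$ gets divided by $r$; this is where characteristic $0$ enters. The cleanest formulation: set $\phi = \sum_{a\in Q_1}$ applied to a weighted antiderivative of $a f_a$ graded piece by graded piece, dividing the degree-$r$ component by $r$. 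Then one computes $D_b(\phi)$ using $D_b(a\cdot(\text{path }p)) = \delta_{ab}\,p\cdot(\ldots) + a\cdot D_b(p)\cdot(\ldots)$ combined with the hypothesis $\sum_a[a,f_a]=0$, which controls exactly the ``error terms'' coming from the cyclic rotations, and one recovers $f_b=D_b(\phi)$.

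The main obstacle, and where I would spend the most care, is organizing the $(\Rightarrow)$ computation so that the hypothesis $\sum_a[a,f_a]=0$ is used correctly. The point is that an individual $f_a$ is not determined by $\phi$ cycle-by-cycle; rather, the relation $\sum_a[a,f_a]=0$ says precisely that the collection $(f_a)_a$ is ``closed'', and the combinatorial content is that the de Rham–type complex $\bigoplus_a e_{s(a)}\wh{kQ}e_{t(a)} \xrightarrow{(f_a)\mapsto \sum[a,f_a]} [\wh{kQ},\wh{kQ}]$ has the cyclic/necklace words as its homology-free part. Concretely, I would work in a fixed degree $r$: the space of degree-$r$ elements of $\bigoplus_a e_{s(a)}\wh{kQ}e_{t(a)}$ maps to degree-$r$ cycles in $\wh{kQ}$, and I would show the kernel of $(f_a)\mapsto\sum_a[a,f_a]$ is exactly $\{(D_a\phi)_a : \phi \text{ homogeneous of degree } r\}$ by a rank count / explicit splitting using the averaging operator $\tfrac{1}{r}$. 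Passing from the homogeneous statement to the formal-series statement is then immediate since $\wh{kQ} = \prod_r (\wh{kQ})_r$ and all operators $D_a$, $[a,-]$ are homogeneous of degree $0$ in the relevant grading (sending degree $r$ to degree $r$). Summarizing each homogeneous $\phi_r$ and forming $\phi = \sum_r \phi_r$ gives a well-defined element of $\wh{kQ}$ with $D_a(\phi)=f_a$ for all $a$.
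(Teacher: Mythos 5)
Your proposal follows essentially the same route as the paper: the forward direction is the identical telescoping computation over cyclic rotations of a single cycle, and for the converse the paper uses exactly your candidate primitive $\phi=\sum_{a\in Q_1}\sum_{r\geq 1}\tfrac{1}{r}\,a\cdot f_a[r-1]$ (where $f_a[r-1]$ is the degree-$(r-1)$ part of $f_a$), with characteristic $0$ entering only through the factors $\tfrac{1}{r}$. The one step you leave vague --- how the hypothesis controls the ``error terms'' --- is handled in the paper by the observation that $\sum_a[a,f_a]=0$ says precisely that each homogeneous piece $\sum_a a\cdot f_a[r-1]$ is invariant under cyclic rotation, from which $D_a$ applied to it returns $r\,f_a[r-1]$ directly by the definition of $D_a$; your proposed ``explicit splitting by the averaging operator'' amounts to the same observation, so no genuinely new idea is missing.
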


\begin{proof}
To prove the if part, it suffices to check the case when $\phi$ is a cycle $w=b_1\ldots b_r$. Then
\begin{align*}
\sum_{a\in Q_1} [a, D_a(w)]&=\sum_{a\in Q_1}\sum_{\{s|b_s=a\}} \big (ab_{s+1}\ldots b_r\ldots b_{s-1}-b_{s+1}\ldots b_r\ldots b_{s-1}a \big )\\
&=\sum_{s=1}^r b_sb_{s+1}\ldots b_r\ldots b_{s-1}-\sum_{s=1}^r b_{s+1}\ldots b_r\ldots b_{s-1} b_s\\
&=0.
\end{align*}

Now we prove the only if part. For a collection of formal series $\{f_a|a\in Q_1\}$, 
we define its antiderivative to be an element $\phi\in \wh{kQ}$ defined by
\[
\phi:=\sum_{a\in Q_1} \sum_{r\geq 1} \frac{1}{r}~a \cdot f_a[r-1]
\] where $f_a[r-1]$ is the sum of paths of degree $r-1$ that occur in $f_a$. It suffices to verify that $D_a(\sum_b bf_b[r-1])=rf_a[r-1]$. Without loss of generality, we may simply assume that $f_a$ are homogeneous of degree $r$ for all $a\in Q_1$. Then by the assumption $\sum_{a\in Q_1} f_a a=\sum_{a\in Q_1} af_a$, $\phi$ is invariant under the cyclic permutations. Therefore $D_a\phi=f_a$ by the definition of $D_a$. 
\end{proof}

\begin{lemma}[Inverse function theorem]\label{inverse}
Suppose  $H$ is an $l$-endomorphism of $\wh{kQ}$ preserving $\wh{\fm}$ that  induces an isomorphism $\wh{\fm}/\wh{\fm}^2 \xrightarrow{\cong} \wh{\fm}/\wh{\fm}^2$.  Then $H$ is invertible.
\end{lemma}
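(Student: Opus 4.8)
The plan is to construct the inverse $G$ of $H$ by successive approximation, building it up degree by degree using the $\wh{\fm}$-adic filtration, and then to verify that the formal series so obtained actually defines an $l$-algebra endomorphism which is a two-sided inverse. Since an $l$-algebra endomorphism of $\wh{kQ}$ is determined by its values $G(a)\in e_{s(a)}\cdot\wh{kQ}\cdot e_{t(a)}$ on the arrows $a\in Q_1$, and since $\wh{kQ}$ is $\wh{\fm}$-adically complete, it suffices to specify each $G(a)$ as a convergent formal series. First I would use the hypothesis: $H$ induces an isomorphism on $\wh{\fm}/\wh{\fm}^2$, so writing $H(a)=\sum_b \lambda_{ab}\,b + (\text{higher order})$ with $\lambda_{ab}\in k$, the ``linear part'' matrix $(\lambda_{ab})$ (block-respecting the node decomposition, i.e. only pairing arrows with the same source and target) is invertible over $k$. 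Let $\mu_{ab}$ denote the entries of the inverse matrix, and set $G_1(a):=\sum_b \mu_{ab}\,b$; this is the first approximation, exact modulo $\wh{\fm}^2$ in the sense that $(G_1\circ H)(a)\equiv a \pmod{\wh{\fm}^2}$.

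Next I would set up the induction. Suppose we have constructed an $l$-algebra endomorphism $G_r$ (equivalently, elements $G_r(a)$) such that $(G_r\circ H)(a)\equiv a\pmod{\wh{\fm}^{r+1}}$ for all $a\in Q_1$; equivalently $G_r\circ H = \id + \epsilon_r$ where $\epsilon_r$ is an $l$-algebra endomorphism (or at least a continuous $k$-linear map preserving the filtration) with $\epsilon_r(a)\in\wh{\fm}^{r+1}$. Writing $\epsilon_r(a) = c_a + (\text{order} \geq r+2)$ with $c_a\in\wh{\fm}^{r+1}$ homogeneous of degree $r+1$, I would define the correction $G_{r+1}$ by $G_{r+1}(a) := G_r(a) - G_r(c_a)$ — more precisely one corrects so as to kill the degree-$(r+1)$ error term. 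Because $H(a)\equiv \sum_b\lambda_{ab} b\pmod{\wh{\fm}^2}$ and $(\lambda_{ab})$ is invertible, composing with $H$ shows $(G_{r+1}\circ H)(a)\equiv a\pmod{\wh{\fm}^{r+2}}$. The corrections $G_{r+1}(a)-G_r(a)$ lie in $\wh{\fm}^{r+1}$, so the sequence $\{G_r(a)\}_r$ is Cauchy in the $\wh{\fm}$-adic topology and converges to a well-defined $G(a)\in e_{s(a)}\cdot\wh{kQ}\cdot e_{t(a)}$. These values define a unique continuous $l$-algebra endomorphism $G$ of $\wh{kQ}$, and by continuity $G\circ H = \id$.

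It then remains to show $G$ is also a right inverse, i.e.\ $H\circ G = \id$. Here I would observe that $G$ itself induces an isomorphism on $\wh{\fm}/\wh{\fm}^2$: its linear part is the inverse matrix $(\mu_{ab})$ by construction. Hence the same argument applied to $G$ produces an endomorphism $G'$ with $G'\circ G = \id$. Then $H = \id\circ H = (G'\circ G)\circ H = G'\circ(G\circ H) = G'\circ\id = G'$, so $H\circ G = G'\circ G = \id$, and $G$ is a genuine two-sided inverse; thus $H$ is an automorphism.

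The main obstacle I anticipate is purely bookkeeping: making the inductive correction step precise enough that $\epsilon_r(a)\in\wh{\fm}^{r+1}$ is genuinely improved to $\wh{\fm}^{r+2}$ at each stage, while keeping track of the fact that $\epsilon_r$ need not be an algebra homomorphism (only $G_r\circ H$ is, but the ``error'' $G_r\circ H-\id$ is merely a filtration-preserving $k$-linear map), and ensuring the node-idempotent constraints $G(e_i)=e_i$ and $G(a)\in e_{s(a)}\wh{kQ}e_{t(a)}$ are respected throughout — this is automatic because everything is built out of $H(a)$'s which already satisfy these constraints and the linear-part matrix is block-diagonal with respect to the source/target grading. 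No deep input is needed beyond $\wh{\fm}$-adic completeness of $\wh{kQ}$ and the standard fact that a continuous endomorphism is determined by its values on arrows.
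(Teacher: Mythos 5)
Your proposal is correct in substance and, like the paper's proof, is at bottom a degree-by-degree successive approximation, but the two arguments are organized differently in the second half. The paper first normalizes $H$ by composing with a linear automorphism $T$ so that $G=H\circ T$ fixes each arrow modulo $\wh{\fm}^2$, then constructs for each arrow $a$ an explicit preimage $g_a$ with $G(g_a)=a$ via the recursion $g_{a,r}=-G(g_{a,1}+\cdots+g_{a,r-1})[r]$; this gives surjectivity, and bijectivity is then obtained by passing to the finite-rank jet spaces $J^r=\wh{kQ}/\wh{\fm}^{r+1}$ and invoking the determinant trick (a surjective endomorphism of a finitely generated module over a commutative ring is bijective). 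You instead build a left inverse $G$ of $H$ directly and dispose of the other side by noting that $G$ again satisfies the hypothesis, so it has its own left inverse $G'$, which is forced to equal $H$; this symmetry argument replaces the jet-space/determinant step and is equally valid over an arbitrary commutative base $k$, since the only input you take from the hypothesis is that the block matrix $(\lambda_{ab})$ is invertible over $k$, which is exactly what ``induces an isomorphism on $\wh{\fm}/\wh{\fm}^2$'' says, that module being free on the arrows. One small repair is needed in your induction: the displayed correction $G_{r+1}(a):=G_r(a)-G_r(c_a)$ is not the right formula when the linear part of $H$ is not the identity. Writing $(G_r\circ H)(a)\equiv a+c_a \pmod{\wh{\fm}^{r+2}}$, the error $c_a$ enters through the linear part $\lambda$ of $H$, so it must be undone by the inverse matrix: set $G_{r+1}(b):=G_r(b)-\sum_a \mu_{ba}\,c_a$, the sum running over arrows $a$ with the same source and target as $b$. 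Together with the observation (which you anticipate) that $G_{r+1}(h)\equiv G_r(h)\pmod{\wh{\fm}^{r+2}}$ for every $h\in\wh{\fm}^2$, this closes the induction, and the rest of your argument goes through as written.
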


\begin{proof}
By assumption we can choose an $l$-automorphism $T:\wh{kQ} \to \wh{kQ}$ induced by a collection of invertible linear transformations on the spaces of arrows with fixed source and target such that the composition $G:=H\circ T$ satisfying
\[
G(a) \equiv a  \mod \wh{\fm}^2, ~~~~~a\in Q_1.
\]
For a formal series $f$, let us denote $f[r]$ to be the sum of all terms  of degree $r$  that occur in $f$. Then
\[
G(f[r]) \equiv f[r] \mod \wh{\fm}^{r+1}.
\]
For $a\in Q_1$, we let $g_{a,1}=a$ and then inductively set
\[
g_{a,r}:= - G(g_{a,1}+\ldots +g_{a,r-1})[r], ~~~ r\geq 2.
\]
Note that $g_{a,r}$ consists of terms of degree $r$. Let $g_a:=\sum_{r\geq 1} g_{a,r}$. Then for $r\geq 1$  we  have
\[
\begin{array}{llll}
G(g_a) & \equiv&  G(g_{a,1}+\ldots +g_{a,r})  & \mod  \wh{\fm}^{r+1}\\
          & \equiv&   G(g_{a,1}+\ldots +g_{a,r-1}) +G(g_{a,r}) & \mod \wh{\fm}^{r+1} \\
          & \equiv&   a -g_{a,r} + G(g_{a,r}) & \mod \wh{\fm}^{r+1} \\
          & \equiv&   a & \mod  \wh{\fm}^{r+1}.
\end{array}
\]
Here, the third ``$\equiv$'' can be easily obtained by induction of $r$.
Consequently,  $G(g_a) =a$ and therefore $G$ is surjective. Now let us consider the $r$-jet space $J^r:= \wh{kQ}/ \wh{\fm}^{r+1}$. They are all  free $k$-modules with a finite basis. The map $G$ then induces an inverse system of ($k$-)linear maps
\[
G_{r}: J^r\to J^r.
\]
Clearly, all $G_{r}$ are surjective and thereof they are all invertible. This can be seen by consider these maps as square matrices with entries in $k$ and then the determinant trick applies. Since $G$ is just the inverse limit of $G_{r}$, it is invertible. Therefore $H$ is also invertible.
\end{proof}

\begin{lemma}[Nakayama Lemma]\label{Nakayama}
Let $Q$ be a finite quiver. Suppose $N$ is an ideal of $\wh{kQ}$ with $N+ \wh{\fm}^{r} \supseteq \wh{\fm}^{r-1}$ for some $r\geq1$. Then $N\supseteq \wh{\fm}^{r-1}$.
\end{lemma}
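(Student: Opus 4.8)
The plan is to deduce this from the completeness of $\wh{kQ}$ for the $\wh{\fm}$-adic topology — the same input used in Lemma~\ref{inverse} — together with the fact that $Q$ is finite, which makes $\wh{\fm}^{r-1}$ a \emph{finitely generated} left ideal. In essence this is Nakayama's lemma for the (non-noetherian, noncommutative) ring $\wh{kQ}$, and I will phrase the argument concretely via the ``determinant trick'' already invoked in the proof of Lemma~\ref{inverse}.

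First I would record the two elementary structural facts. Since $Q$ is finite there are only finitely many paths $p_1,\dots,p_m$ of length exactly $r-1$; grouping any formal series in $\wh{\fm}^{r-1}$ according to its terminal subpath of length $r-1$ shows $\wh{\fm}^{r-1}=\sum_{i=1}^m \wh{kQ}\,p_i$, and grouping according to the initial arrow gives $\wh{\fm}^{r}=\wh{\fm}\cdot\wh{\fm}^{r-1}=\sum_{i=1}^m \wh{\fm}\,p_i$. Next, the hypothesis $N+\wh{\fm}^r\supseteq\wh{\fm}^{r-1}$ says exactly that each $p_i$ can be written $p_i=n_i+\sum_{j=1}^m b_{ij}p_j$ with $n_i\in N$ and $b_{ij}\in\wh{\fm}$; writing $\mathbf p=(p_1,\dots,p_m)^{T}$, $\mathbf n=(n_1,\dots,n_m)^{T}$, $B=(b_{ij})$, this is the matrix identity $(I_m-B)\mathbf p=\mathbf n$ in $M_m(\wh{kQ})$. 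Because $\wh{kQ}$ is complete and separated for the $\wh{\fm}$-adic topology and $B$ has entries in $\wh{\fm}$, one has $B^k\in M_m(\wh{\fm}^k)$, so $\sum_{k\ge 0}B^k$ converges in $M_m(\wh{kQ})$ and is a two-sided inverse of $I_m-B$ (equivalently, $\wh{\fm}\subseteq\rad(\wh{kQ})$). Hence $\mathbf p=(I_m-B)^{-1}\mathbf n$, and since $N$ is a left ideal while the entries of $(I_m-B)^{-1}$ lie in $\wh{kQ}$, each $p_i\in N$; therefore $\wh{\fm}^{r-1}=\sum_i\wh{kQ}\,p_i\subseteq N$.

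The genuinely substantive point is the first step: finiteness of $Q$ is what guarantees that $\wh{\fm}^{r-1}$ is finitely generated as a left $\wh{kQ}$-module, and this is precisely what allows the geometric-series inversion of $I_m-B$ to land us inside $N$ rather than merely inside its closure $N^{cl}$. So the main care needed is in the bookkeeping of the ``terminal subpath'' and ``initial arrow'' decompositions (and in checking these really are finite sums); once those are in place, Steps 2--4 are routine, relying only on completeness of $\wh{kQ}$, which is already available.
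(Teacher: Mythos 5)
Your proof is correct and follows essentially the same route as the paper: both arguments rest on the two facts that $\wh{\fm}^{r-1}$ is finitely generated as a left $\wh{kQ}$-module (finiteness of $Q$) and that $\wh{\fm}$ lies in the Jacobson radical because $1+\wh{\fm}$ consists of units (completeness). The only difference is that the paper massages the hypothesis via the modular law and then cites the classical Nakayama lemma, whereas you inline its standard proof as an explicit geometric-series inversion of $I_m-B$; your observation that this lands the generators in $N$ itself rather than in $N^{cl}$ is exactly the right point to flag.
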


\begin{proof}
Note that $\wh{\fm}^{r-1}$ is a finitely generated left $\wh{kQ}$-module. Also note that elements of $1+ \wh{\fm}$ are  all invertible in $\wh{kQ}$, so $\wh{\fm}$ contains in the Jacobson radical of $\wh{F}$. By modularity,
\[
N\cap \wh{\fm}^{r-1}+ \wh{\fm}^{r} = (N+ \wh{\fm}^{r}) \cap \wh{\fm}^{r-1}= \wh{\fm}^{r-1}.
\]
Then by \cite[Lemma 4.22]{Lam}, the Nakayama Lemma, we have $N\cap \wh{\fm}^{r-1} = \wh{\fm}^{r-1}$ and so $N\supseteq  \wh{\fm}^{r-1}$.
\end{proof}


\section{Noncommutative Mather-Yau theorem}\label{sec:Proof}

This section is devoted to establish a noncommutative analogue of the well-known Mather-Yau theorem  in the hypersurface singularity theory \cite{MY, BY90, Yau84}.

Throughout, $k$ stands for a commutative ring with unit, $Q$ stands for a finite quiver  and $l=kQ_0$, which is a subalgebra of $\wh{kQ}$.  Let $\wh{\fm} \subseteq \wh{kQ}$ be the ideal generated by arrows.
We continue to use the notations appear in Diagram (\ref{derivation+}). So $\pi: \wh{kQ}\to \wh{kQ}_{\ol{\cy}}$ is the projection map, and for each potential $\Phi\in \wh{kQ}_{\ol{\cy}}$ there are maps $\Phi_*: \wh{\cder}_l(\wh{kQ}) \to \wh{kQ}$ and $\Phi_\#: \der_l(\wh{kQ}) \to \wh{kQ}_{\ol{\cy}}$.

Also, let $\wh{\dder}_l^+ (\wh{kQ})$ be the space of  double derivations of $\wh{kQ}$ that map  $\wh{\fm}$ to $\wh{\fm}  \wh{\ot} \wh{kQ}+ \wh{kQ}\wh{\ot}  \wh{\fm}$, and   $\der_l^+ (\wh{kQ})$ (resp. $\wh{\cder}_l^+ (\wh{kQ})$)  the space of derivations (resp. cyclic derivations) of $\wh{kQ}$ that preserves $\wh{\fm}$. It is easy to check that $\wh{\mu}(\wh{\dder}_l^+(\wh{kQ})) = \der_l^+(\wh{kQ})$ and $\wh{\mu}(\wh{\tau}(\wh{\dder}_l^+(\wh{kQ}))) = \wh{\cder}_l^+(\wh{kQ})$.

\subsection{Right equivalence of potentials}

We denote by $\mathcal{G}:=\Aut_l(\wh{kQ},\wh{\fm})$ the group of $l$-algebra automorphisms of $\wh{kQ}$ that preserves $\wh{\fm}$. It is a subgroup of  $\Aut_l(\wh{kQ})$, the group of all $l$-algebra automorphisms of $\wh{kQ}$.  In the case when $k$ is a field,  $\mathcal{G}=\Aut_l(\wh{F})$. Note that $\mathcal{G}$ acts on $\wh{kQ}_{\ol{\cy}}$ in the obvious way.

\begin{definition}\label{right-equivalent}
For potentials $\Phi,\Psi\in \wh{kQ}_{\ol{\cy}}$, we say $\Phi$ is \emph{(formally) right equivalent} to $\Psi$ and write $\Phi \sim \Psi$, if $\Phi$ and $\Psi$ lie in the same $\cG$-orbit.
\end{definition}

For potentials of  the non complete path algebra $kQ$, one may similarly define the \emph{algebraically right equivalence} in terms of the action of the group of $l$-algebra automorphisms of $kQ$ on $kQ_\cy$. It turns out that two potentials of $kQ$ can be algebraically right equivalent but not formally, and vice versa. This subtle difference can be checked by the following example.

\begin{example}
Let $k=\CC$ and $Q$ the quiver with one node and one loop. Consider the potentials $\Phi: = \frac{1}{2}x^2+\frac{1}{3}x^3$ and $\Psi:= \frac{1}{2}(x-1)^2+\frac{1}{3}(x-1)^3$ of $F$. We have $H(\Phi) =\Psi$ for $H: kQ\to kQ$ the $\CC$-algebra automorphism given by $x\mapsto x-1$, so $\Phi$ and $\Psi$ are algebraically right equivalent. Since  $\Phi$ is not a unit of $\wh{kQ}$ but $\Psi$ is,  $\Phi$ and $\Psi$ are not formally right equivalent.  Now consider the potentials $\Phi':= x^2$ and $\Psi':= x^2+ x^3$ of $kQ$. It is straightforward to show that $\Phi'$ and $\Psi'$ are not algebraically right equivalent. But we have $H'(\Phi') = \Psi'$ for $H': \wh{kQ}\to \wh{kQ}$ the $\CC$-algebra automorphism given by $x\mapsto x\cdot u$, where $u$ is the power series expansion of $(1+x)^{1/2}$ at $0$ (so $u^2=1+x$).
\end{example}

The main concern of this section is to explore relations between the isomorphism  of Jacobi algebras of $\wh{kQ}$ and the right equivalence relation of potentials of $\wh{kQ}$.

The next result was already obtained in \cite[Proposition 3.7]{DWZ} in a more general form. For completeness and reader's convenience, we give a demonstration in our own notations.

\begin{prop}\label{Jacobi-transform}
Let  $\Phi \in \wh{kQ}_{\ol{\cy}}$  and $H\in \mathcal{G}$.  Then
\[
H(\wh{\rJ}(Q,\Phi)) =\wh{\rJ}(Q,  H(\Phi)).
\]
Consequently, $H$ induces an isomorphism of $l$-algebras $\wh{\Lm}(Q, \Phi) \cong \wh{\Lm}(Q,  H(\Phi))$.
\end{prop}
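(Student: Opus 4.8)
The plan is to reduce the statement to the chain rule for cyclic derivations (Lemma~\ref{chain-rule}) applied to the automorphism $H$. Fix a representative $\phi\in\wh{kQ}$ of $\Phi$, so that $H(\phi)$ represents $H(\Phi)$. By definition, $\wh{\rJ}(Q,\Phi)=\im(\Phi_*)=\big(\{\Phi_*(D_a)\mid a\in Q_1\}\big)^{cl}$, where the last equality is Lemma~\ref{Jacobi-closed}; similarly $\wh{\rJ}(Q,H(\Phi))=\big(\{D_\beta(H(\phi))\mid \beta\in Q_1\}\big)^{cl}$. So it suffices to show that $H$ carries the two-sided ideal generated by the $\{\Phi_*(D_a)\}_{a\in Q_1}$ onto the two-sided ideal generated by the $\{D_\beta(H(\phi))\}_{\beta\in Q_1}$; since $H$ is a continuous $l$-algebra automorphism it preserves closures of ideals, so it is enough to match the two \emph{non-closed} generated ideals.

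First I would show $H\big(\wh{\rJ}(Q,\Phi)\big)\subseteq \wh{\rJ}(Q,H(\Phi))$. Apply the chain rule \eqref{chainrule2} with $Q'=Q$ and the homomorphism $H$: for every $\beta\in Q_1$,
\[
D_\beta(H(\phi))=\sum_{a\in Q_1}\Big(\tfrac{\partial h_a}{\partial \beta}\Big)^{\pp}\cdot H\big(D_a\phi\big)\cdot\Big(\tfrac{\partial h_a}{\partial \beta}\Big)^{\p},
\]
which exhibits each $D_\beta(H(\phi))$ as a (convergent) two-sided combination of the elements $H(D_a\phi)=H(\Phi_*(D_a))$. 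Hence the ideal generated by $\{D_\beta(H(\phi))\}$ is contained in the ideal generated by $\{H(\Phi_*(D_a))\}=H\big(\{\Phi_*(D_a)\}\big)$, and taking closures gives $\wh{\rJ}(Q,H(\Phi))\subseteq H(\wh{\rJ}(Q,\Phi))$. For the reverse inclusion, I would apply the same argument to $H^{-1}\in\mathcal G$ and the potential $H(\Phi)$: this yields $\wh{\rJ}(Q,\Phi)=\wh{\rJ}(Q,H^{-1}(H(\Phi)))\subseteq H^{-1}\big(\wh{\rJ}(Q,H(\Phi))\big)$, i.e. $H(\wh{\rJ}(Q,\Phi))\subseteq \wh{\rJ}(Q,H(\Phi))$. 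Combining the two inclusions gives the claimed equality, and the quotient isomorphism $\wh{\Lm}(Q,\Phi)\xrightarrow{\cong}\wh{\Lm}(Q,H(\Phi))$ follows by passing to quotients.

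The main obstacle is bookkeeping about convergence and the inner-bimodule Sweedler notation: one must check that the infinite sums appearing in \eqref{chainrule2} genuinely converge in the $\wh{\fm}$-adic topology (this is where $H\in\mathcal G$, i.e. $H$ preserves $\wh{\fm}$, enters, guaranteeing $h_a=H(a)\in\wh{\fm}$ and hence $\partial h_a/\partial\beta$ raises filtration degree appropriately), and that $H$ being continuous lets us interchange $H$ with closures of ideals. Granting Lemma~\ref{chain-rule} and Lemma~\ref{Jacobi-closed}, which the excerpt has already established, these points are routine, so the proof is essentially a two-line application of the chain rule symmetrized via $H^{-1}$.
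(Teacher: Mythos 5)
Your proposal is correct and follows essentially the same route as the paper: apply the chain rule (Lemma~\ref{chain-rule}) to see that each generator $H(\Phi)_*(D_\beta)$ lies in $H(\wh{\rJ}(Q,\Phi))$, invoke Lemma~\ref{Jacobi-closed} to pass to the closed ideal, and obtain the reverse inclusion by symmetry via $H^{-1}$. The only blemish is editorial: you announce you will ``first show $H(\wh{\rJ}(Q,\Phi))\subseteq \wh{\rJ}(Q,H(\Phi))$'' but the chain-rule argument you then give actually proves the opposite inclusion $\wh{\rJ}(Q,H(\Phi))\subseteq H(\wh{\rJ}(Q,\Phi))$, with the announced one coming from the $H^{-1}$ step --- both inclusions are nevertheless established.
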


\begin{proof}
By the chain rule (Lemma \ref{chain-rule}),  we have $H(\Phi)_*(D_a) \in H(\wh{\rJ}(Q,\Phi))$ for all $a\in Q_1$. Then by Lemma \ref{Jacobi-closed},  $\wh{\rJ}(Q, H(\Phi)) =\{ ~H(\Phi)_*(D_a) ~|~ a\in Q_1~\}^{cl} \subseteq  H(\wh{\rJ}(Q,\Phi))$.
Symmetrically, we have $\wh{\rJ}(Q,\Phi) \subseteq H^{-1}(\wh{\rJ}(Q,H(\Phi)))$. Thus $H(\wh{\rJ}(Q,\Phi)) = \wh{\rJ}(Q,H(\Phi))$.
\end{proof}

\begin{remark}
Proposition \ref{Jacobi-transform} tells us that two right equivalent potentials of $\wh{kQ}$ have isomorphic Jacobi algebras. The converse is not true, even for $k=\CC$ under the additional assumption that the two isomorphic Jacobi algebras are finite dimensional. For example, take $Q$ to be the quiver with one node and one loop, $\Phi: = \frac{1}{2}x^2+\frac{1}{3}x^3$ and $\Psi:= \frac{1}{2}(x-1)^2+\frac{1}{3}(x-1)^3$. Then $\Phi_*(D_x) = x+x^2$ and $\Psi_*(D_x)=(x-1)+ (x-1)^2$. So  $\wh{\Lm}(Q, \Phi) $ and $\wh{\Lm}(Q, \Psi)$ are both isomorphic to the algebra $\CC$. However, $\Phi$ and $\Psi$ are not right equivalent simply because $\Phi$ is not a unit of $\wh{kQ}$ but $\Psi$ is.
\end{remark}


\subsection{Statement of the nc Mather-Yau theorem}

Given a potential $\Phi\in \wh{kQ}_{\ol{\cy}}$,  let $\wh{\fm}_\Phi:= \wh{\fm}/ \wh{\rJ}(Q, \Phi)$, which is an ideal of $\wh{\Lm}(Q, \Phi)$. By Lemma \ref{Jacobi-closed}, the $\wh{\fm}_\Phi$-adic topology of  $\wh{\Lm}(Q, \Phi)$ is complete.  Let
\[
\wh{\Lm}(Q, \Phi)_{\ol{\cy}}:=\wh{\Lm}(Q, \Phi)/[\wh{\Lm}(Q, \Phi), \wh{\Lm}(Q, \Phi)]^{cl}.
\]
Clearly, if $\wh{\Lm}(Q, \Phi)$ is a finitely generated $k$-module, then 
\[\wh{\Lm}(Q, \Phi)_{\ol{\cy}}=\wh{\Lm}(Q, \Phi)_\cy=HH_0(\wh{\Lm}(Q, \Phi)).\]
The projection map $\wh{kQ} \to \wh{\Lm}(Q, \Phi)$ induces a  natural map 
\[p_\Phi: \wh{kQ}_{\ol{\cy}} \to \wh{\Lm}(Q, \Phi)_{\ol{\cy}}\]
with kernel $\pi(\wh{\rJ}(Q,\Phi))$. For any $\Theta\in \wh{kQ}_{\ol{\cy}}$, we  write  $[\Theta]= [\Theta]_\Phi:= p_\Phi(\Theta).$

Given a homomorphism $\gamma: A\to B$ of $k$-algebras, we denote by $\gamma_*: A_\cy\to B_\cy$ the induced map. Our main result in this section is the next theorem, which is a noncommutative analogue of the Mather-Yau theorem for isolated hypersurface singularities.

\begin{theorem}[nc Mather-Yau Theorem]\label{ncMY}
Let $Q$ be a finite quiver. Let $\Phi, \Psi\in \wh{\CC Q}_{\ol{\cy}}$ two potentials  of order $\geq 3$ such that the Jacobi algebras $\wh{\Lm}(Q, \Phi)$ and $\wh{\Lm}(Q, \Psi)$ are both finite dimensional. Then the following two statements are equivalent:
\begin{enumerate}
\item[$(1)$] There is an $\CC Q_0$-algebra isomorphism $\gamma: \wh{\Lm}(Q, \Phi)\cong\wh{\Lm}(Q, \Psi)$ so that $\gamma_*([\Phi]_\Phi)=[\Psi]_\Psi$.
\item[$(2)$] $\Phi$ and $\Psi$ are right equivalent.
\end{enumerate}
\end{theorem}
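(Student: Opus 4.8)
I would first dispose of $(2)\Rightarrow(1)$, which is immediate: given $H\in\cG$ with $H(\Phi)=\Psi$, Proposition~\ref{Jacobi-transform} gives $H(\wh{\rJ}(Q,\Phi))=\wh{\rJ}(Q,\Psi)$, so $H$ descends to a $\CC Q_0$-algebra isomorphism $\gamma\colon\wh{\Lm}(Q,\Phi)\xrightarrow{\,\cong\,}\wh{\Lm}(Q,\Psi)$; since $H$ itself carries the potential $\Phi$ to $\Psi$, compatibility with the quotient maps $p_\Phi,p_\Psi$ yields $\gamma_*([\Phi]_\Phi)=[\Psi]_\Psi$. The substance of the theorem is the converse $(1)\Rightarrow(2)$, a noncommutative Mather--Yau statement, and the plan is to transplant Mather's infinitesimal (homotopy) method to $\wh{\CC Q}$: by Diagram~(\ref{derivation+}) the ``tangent direction to the orbit'' is the Jacobi ideal modulo commutators, a \emph{two-sided ideal}, even though the space of derivations by itself carries no module structure, and this is what makes the method go through.

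So assume (1). The first step is to promote $\gamma$ to an automorphism of $\wh{\CC Q}$. Since $\wh{\Lm}(Q,\Phi)$ is finite dimensional and $\CC Q_0$ is separable, $\wh{\fm}/\wh{\rJ}(Q,\Phi)$ is the Jacobson radical of $\wh{\Lm}(Q,\Phi)$, hence nilpotent, so $\wh{\fm}^N\subseteq\wh{\rJ}(Q,\Phi)$ for some $N$, and similarly for $\Psi$. In particular $\gamma$ preserves the augmentation ideals; and as $\Phi,\Psi$ have order $\geq 3$ we have $\wh{\rJ}(Q,\Phi),\wh{\rJ}(Q,\Psi)\subseteq\wh{\fm}^2$, so $\gamma$ induces an isomorphism $\wh{\fm}/\wh{\fm}^2\xrightarrow{\,\cong\,}\wh{\fm}/\wh{\fm}^2$. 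Defining a $\CC Q_0$-algebra endomorphism $\tilde\gamma$ of $\wh{\CC Q}$ by sending each arrow $a$ to a preimage in $\wh{\fm}$ of the image of $a$ under $\wh{\CC Q}\to\wh{\Lm}(Q,\Phi)\xrightarrow{\gamma}\wh{\Lm}(Q,\Psi)$ produces a lift of $\gamma$ inducing the same isomorphism on $\wh{\fm}/\wh{\fm}^2$, hence $\tilde\gamma\in\cG$ by the Inverse Function Theorem (Lemma~\ref{inverse}). Replacing $\Psi$ by $\tilde\gamma^{-1}(\Psi)$ --- still right equivalent to $\Psi$ by Proposition~\ref{Jacobi-transform}, and still satisfying (1), now with the identity in place of $\gamma$ --- we may henceforth assume
\[
\wh{\rJ}(Q,\Phi)=\wh{\rJ}(Q,\Psi)=:\wh{\rJ},\qquad \Phi-\Psi\in\pi(\wh{\rJ}),\qquad \wh{\fm}^{N}\subseteq\wh{\rJ}.
\]
By the finite determinacy theorem (Theorem~3.16) both $\Phi$ and $\Psi$ are $(N{+}1)$-determined, so it suffices to find $H\in\cG$ with $H(\Phi)\equiv\Psi$ modulo $\pi(\wh{\fm}^{N+2})$; and since a $\CC Q_0$-algebra automorphism of the jet algebra $J^{N+1}=\wh{\CC Q}/\wh{\fm}^{N+2}$ lifts to $\cG$ exactly when it preserves the augmentation ideal, it is enough to solve the problem inside the \emph{finite-dimensional} group $\Aut^{+}_{\CC Q_0}(J^{N+1})$ of such automorphisms, acting on $J^{N+1}_{\cy}$.

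Now the homotopy method. Take the canonical representatives $\phi,\psi\in\wh{\fm}^3$, set $\phi_t:=\phi+t(\psi-\phi)$ and $\Phi_t:=\pi(\phi_t)$ for $t\in[0,1]$; each $\Phi_t$ has order $\geq 3$. I would look for a family $G_t\in\Aut^{+}_{\CC Q_0}(J^{N+1})$, polynomial in $t$, with $G_0=\id$ and $G_t(\Phi^{(N+1)})=\Phi_t^{(N+1)}$ for all $t$; then $G_1(\Phi^{(N+1)})=\Psi^{(N+1)}$, and $(N{+}1)$-determinacy gives $\Phi\sim\Psi$. Differentiating and writing $\xi_t:=(\ddt G_t)\circ G_t^{-1}$, a time-dependent $\CC Q_0$-derivation of $J^{N+1}$, one checks that $G_t$ has the desired property precisely when $\xi_t$ solves
\[
(\Phi_t)_{\#}(\xi_t)=\pi(\psi-\phi)\quad\text{in }J^{N+1}_{\cy},
\]
with $\xi_t$ preserving the augmentation ideal (so that it exponentiates within $\Aut^{+}_{\CC Q_0}(J^{N+1})$): indeed the potentials $t\mapsto G_t(\Phi^{(N+1)})$ and $t\mapsto\Phi_t^{(N+1)}$ then satisfy the same linear ordinary differential equation $\ddt\Theta_t=(\Theta_t)_{\#}(\xi_t)$ with the same initial value, hence coincide. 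By Diagram~(\ref{derivation+}) and the surjectivity of $\wh{\mu}\circ-$, the image of $(\Phi_t)_{\#}$ on \emph{all} $\CC Q_0$-derivations is $\pi(\wh{\rJ}(Q,\Phi_t))$; so the solvability of the displayed equation with $\xi_t$ preserving $\wh{\fm}$ reduces to two assertions, which together constitute the bootstrapping lemma (Proposition~\ref{Bootstrapping}): \emph{(i)} the Jacobi ideal is constant along the path, $\wh{\rJ}(Q,\Phi_t)=\wh{\rJ}$ for every $t$; this is extracted from $\Phi-\Psi\in\pi(\wh{\rJ})$ by an induction on the $\wh{\fm}$-adic filtration, using that cyclic derivations annihilate the closed commutator subspace together with the order $\geq 3$ hypothesis. \emph{(ii)} The fixed potential $\pi(\psi-\phi)$ --- which has order $\geq 3$ and lies in $\pi(\wh{\rJ})$ --- is already in the image of $(\Phi_t)_{\#}$ when this is restricted to derivations that \emph{preserve} $\wh{\fm}$; assertion (ii) again uses order $\geq 3$ together with the finiteness of $\wh{\Lm}(Q,\Phi)$, its role being to guarantee that the ``degree-dropping'' derivations --- the noncommutative counterpart of translating the origin, which one is not allowed to do in the formal world --- never have to be invoked. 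Granting (i) and (ii), one obtains $\xi_t$ depending polynomially on $t$; integrating the linear ODE $\ddt G_t=\xi_t\circ G_t$, $G_0=\id$, in $\End_{\CC}(J^{N+1})$ produces a family $G_t$ of $\CC Q_0$-algebra automorphisms of $J^{N+1}$ preserving the augmentation ideal (because $\xi_t$ is a derivation preserving it), i.e. $G_t\in\Aut^{+}_{\CC Q_0}(J^{N+1})$, and $G_1(\Phi^{(N+1)})=\Psi^{(N+1)}$ completes the proof.

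I expect the crux to be the bootstrapping lemma (Proposition~\ref{Bootstrapping}): simultaneously pinning down the Jacobi ideal along the straight-line path $\Phi_t$ and showing that the infinitesimal direction $\pi(\psi-\phi)$ is realized by a derivation that genuinely integrates to an automorphism of $\wh{\CC Q}$ --- the formal, noncommutative replacement for the translations of the origin that are freely available, and tacitly used, in the classical Mather--Yau theorem. Everything else --- the lift of $\gamma$, the reduction to the jet algebra via finite determinacy, and the integration of the resulting flow --- should be routine.
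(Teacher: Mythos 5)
Your skeleton agrees with the paper's up to the last step: $(2)\Rightarrow(1)$ via Proposition \ref{Jacobi-transform}, the lift of $\gamma$ to $\wh{\CC Q}$ via Lemma \ref{inverse}, the reduction to $\wh{\rJ}(Q,\Phi)=\wh{\rJ}(Q,\Psi)$ and $[\Phi]_\Phi=[\Psi]_\Psi$, the passage to the jet algebra via finite determinacy, and the role of the bootstrapping proposition in upgrading $\Phi-\Psi\in\pi(\wh{\rJ})$ to $\Phi-\Psi\in\pi(\wh{\fm}\cdot\wh{\rJ}+\wh{\rJ}\cdot\wh{\fm})$, i.e.\ to the image of $\Xi_{\#}$ on $\wh{\fm}$-preserving derivations. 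But your step (i) --- that $\wh{\rJ}(Q,\Phi_t)=\wh{\rJ}$ for \emph{every} $t\in[0,1]$, so that the infinitesimal equation $(\Phi_t)_{\#}(\xi_t)=\pi(\psi-\phi)$ is solvable along the whole real segment --- is false, and no induction on the $\wh{\fm}$-adic filtration will rescue it. Take $Q$ to be one node with one loop $x$, $\Phi=x^3$, $\Psi=-x^3+x^4$. Both have order $3$, both Jacobi ideals equal $(x^2)$, both Jacobi algebras are $\CC[[x]]/(x^2)$, and $[\Phi]_\Phi=[\Psi]_\Psi=0$, so hypothesis $(1)$ holds with $\gamma=\mathrm{id}$; yet $\Phi_{1/2}=\tfrac12 x^4$ has $\wh{\rJ}(Q,\Phi_{1/2})=(x^3)\subsetneq(x^2)$. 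At $t=1/2$ the image of $(\Phi_t)_{\#}$ on $\wh{\fm}$-preserving derivations is $(x^4)$, which does not contain $\pi(\psi-\phi)=-2x^3+x^4$; for $t\neq 1/2$ the equation is solvable but the solution $\xi_t$ blows up as $t\to 1/2$, so your ODE for $G_t$ cannot be integrated across that point. (The only derivation realizing $\pi(\psi-\phi)$ at $t=1/2$ is of the form $(-1+\tfrac{x}{2})\partial_x$, i.e.\ exactly the forbidden ``translation of the origin''.)

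This is precisely the point where the paper's argument diverges from yours: instead of integrating over the real interval, it considers the \emph{complex} line $\cL=\{t\Psi^{(s)}+(1-t)\Phi^{(s)}\mid t\in\CC\}$ inside the finite-dimensional space $J^s_\cy$, observes that the locus $\cL\setminus\cL_0$ where the orbit-tangent space $q_s\big(\pi(\wh{\fm}\cdot\wh{\rJ}+\wh{\rJ}\cdot\wh{\fm})\big)$ drops is the degeneracy locus of an analytic family of linear maps and hence \emph{finite}, and then applies Mather's Lie-group lemma (\cite[Lemma 1.1]{Wall}) to the connected submanifold $\cL_0$ (the complement of finitely many points in $\CC$ is still connected, whereas $[0,1]$ minus a point is not) to conclude that $\cL_0$ lies in a single $\cG^s$-orbit containing both $\Phi^{(s)}$ and $\Psi^{(s)}$. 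A secondary correction: Proposition \ref{Bootstrapping} contains only your assertion (ii) (the passage from $\pi(\wh{\rJ})$ to $\pi(\wh{\fm}\cdot\wh{\rJ}+\wh{\rJ}\cdot\wh{\fm})$, which is what makes condition $(3)$ of Mather's lemma hold); your assertion (i) is not part of it, and as shown above is not true.
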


\begin{proof}
We postpone the proof to the end of this section. It needs some technical results which have interest in their own right and  will be presented in the following two subsections. Our discussion is modified from that in \cite{MY}, which deals with isolated hypersurface singularities.
\end{proof}

\begin{remark}
Instead of considering the $\cG$-orbits in $\wh{kQ}_{\ol{\cy}}$, we may rephrase an enhanced version of the nc Mather-Yau theorem for $\cG$-orbits in $\wh{kQ}$. Unfortunately, this cannot hold. Take $Q$ to be the quiver with one node and two arrows. Choose any formal series $\phi \in \wh{\fm}^4$ with finite dimensional Jacobi algebra. Then $\psi:=\phi+ x^2y-yx^2 \not\in \wh{\fm}^4$. Clearly, there is an isomorphism of Jacobi algebras for  $\phi$ and $\psi$. However, $\phi$ and $\psi$ are not in the same $\cG$-orbit because any automorphism of $\wh{kQ}$ should preserve powers of $\wh{\fm}$.
\end{remark}

\begin{definition}\label{quasi-homogeneous}
We call a potential $\Phi \in \wh{kQ}_{\ol{\cy}}$ \emph{quasi-homogeneous} if $[\Phi]_\Phi$ is zero in $\wh{\Lm}(Q, \Phi)_{\ol{\cy}}$, or equivalently,  if  $\Phi$ is contained in $\pi(\wh{\rJ}(Q,\Phi))$.
\end{definition}

\begin{corollary}\label{ncMY-QH}
Let $Q$ be a finite quiver.  Let $\Phi, \Psi\in \wh{\CC Q}_{\ol{\cy}}$ two quasi-homogeneous potentials of order  $\geq 3$ such that the Jacobi algebras $\wh{\Lm}(Q,\Phi)$ and $\wh{\Lm}(Q,\Psi)$ are both finite dimensional. Then the following two statements are equivalent:
\begin{enumerate}
\item[$(1)$] There is an $\CC Q_0$-algebra isomorphism $\wh{\Lm}(Q,\Phi)\cong\wh{\Lm}(Q,\Psi)$.
\item[$(2)$] $\Phi$ and $\Psi$ are right equivalent.
\end{enumerate}
\end{corollary}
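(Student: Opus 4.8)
The plan is to reduce the statement directly to the nc Mather--Yau theorem (Theorem \ref{ncMY}); the observation that makes this work is that quasi-homogeneity forces the zeroth-Hochschild-homology obstruction appearing in Theorem \ref{ncMY} to be vacuous, so that a bare $\CC Q_0$-algebra isomorphism of Jacobi algebras already satisfies hypothesis (1) of that theorem.

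First I would treat $(2)\Rightarrow(1)$, which needs no quasi-homogeneity: if $\Phi\sim\Psi$ then $\Psi=H(\Phi)$ for some $H\in\cG=\Aut_l(\wh{\CC Q},\wh{\fm})$, and Proposition \ref{Jacobi-transform} says $H(\wh{\rJ}(Q,\Phi))=\wh{\rJ}(Q,\Psi)$, so $H$ descends to an $l$-algebra isomorphism $\wh{\Lm}(Q,\Phi)\cong\wh{\Lm}(Q,\Psi)$; since $l=\CC Q_0$ this is exactly a $\CC Q_0$-algebra isomorphism. For $(1)\Rightarrow(2)$, let $\gamma:\wh{\Lm}(Q,\Phi)\xrightarrow{\cong}\wh{\Lm}(Q,\Psi)$ be a $\CC Q_0$-algebra isomorphism. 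Both Jacobi algebras are finite dimensional, so (as recorded before Definition \ref{quasi-homogeneous}) $\wh{\Lm}(Q,\Phi)_{\ol{\cy}}=\wh{\Lm}(Q,\Phi)_\cy=HH_0(\wh{\Lm}(Q,\Phi))$ and similarly for $\Psi$; in particular the closed commutator subspace coincides with the commutator subspace, and $\gamma$ induces an isomorphism $\gamma_*:\wh{\Lm}(Q,\Phi)_{\ol{\cy}}\to\wh{\Lm}(Q,\Psi)_{\ol{\cy}}$ on zeroth Hochschild homology with no topological subtlety. Since $\Phi$ is quasi-homogeneous, Definition \ref{quasi-homogeneous} gives $[\Phi]_\Phi=0$, hence $\gamma_*([\Phi]_\Phi)=0$; since $\Psi$ is quasi-homogeneous, $[\Psi]_\Psi=0$. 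Therefore $\gamma_*([\Phi]_\Phi)=[\Psi]_\Psi$ holds automatically, so hypothesis (1) of Theorem \ref{ncMY} is satisfied, and Theorem \ref{ncMY} yields that $\Phi$ and $\Psi$ are right equivalent.

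I do not expect any genuine obstacle here: all the substance is in Theorem \ref{ncMY}, and the only things to check are the trivial bookkeeping that $\gamma$ induces a map on $HH_0$ (immediate) and that the potentials have order $\geq 3$ with finite-dimensional Jacobi algebras --- exactly the hypotheses of Theorem \ref{ncMY}, inherited verbatim from the hypotheses of the corollary. So the proof is a short reduction, with quasi-homogeneity used only to discard the Hochschild-class condition from the statement of the main theorem.
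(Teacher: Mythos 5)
Your proof is correct and is essentially identical to the paper's: the paper also observes that quasi-homogeneity makes $[\Phi]_\Phi=[\Psi]_\Psi=0$, so the Hochschild-class condition in Theorem \ref{ncMY} is automatic and the corollary follows immediately. The extra bookkeeping you supply (the $(2)\Rightarrow(1)$ direction via Proposition \ref{Jacobi-transform} and the identification of $\wh{\Lm}(Q,\Phi)_{\ol{\cy}}$ with $HH_0$ in the finite-dimensional case) is all consistent with the paper, which simply leaves it implicit.
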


\begin{proof}
Since $[\Phi]_\Phi$ and $[\Psi]_\Psi$ are both zero, the result is an immediate consequence of  Theorem \ref{ncMY}.
\end{proof}

Quasi-homogeneous potentials  are abundant. Let us call a potential of $\wh{kQ}$ \emph{weighted-homogeneous} if it has a representative with all terms have the same degree with respect to some choice of positive weight of the arrows of $Q$. For example, take $Q$ to be the quiver with one node and two arrows $x,y$. Then $\Phi=\pi(x^2y-\frac{1}{4}y^4)\in \wh{kQ}_{\ol{\cy}}$ is  weighted-homogeneous. Note that all weighted-homogeneous potentials must lie in $kQ_\cy$ because there are only finitely many paths of the same degree with respect to any choice of weights on arrows.

\begin{lemma}
Weighted-homogeneous potentials of $\wh{kQ}$ are  quasi-homogeneous.
\end{lemma}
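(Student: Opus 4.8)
The plan is to exhibit, for a weighted-homogeneous potential, an explicit Euler-type derivation that realizes $\Phi$ as an element of the Jacobi ideal, thereby showing $[\Phi]_\Phi = 0$ in $\wh{\Lm}(Q,\Phi)_{\ol{\cy}}$, which is exactly the definition of quasi-homogeneity (Definition \ref{quasi-homogeneous}). Since a weighted-homogeneous potential lies in $kQ_\cy$ by the observation preceding the lemma, I may work with its canonical representative $\phi$, a finite linear combination of standard cycles all of the same weighted degree $d$ with respect to a fixed assignment of positive integer weights $w_a$ to the arrows $a \in Q_1$.

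First I would set up the weighted Euler derivation. Define $\xi \in \der_l(\wh{kQ})$ on arrows by $\xi(a) = w_a\, a$; this extends uniquely to an $l$-derivation of $\wh{kQ}$ since $l$-derivations are determined by their values on arrows. A straightforward Leibniz-rule computation on a single cycle $w = b_1\cdots b_r$ of weighted degree $\sum_s w_{b_s}$ gives $\xi(w) = \big(\sum_s w_{b_s}\big)\, w$; applying this termwise (all terms of $\phi$ have weighted degree $d$) yields $\xi(\phi) = d\,\phi$, hence $\pi(\xi(\phi)) = d\,\Phi$ in $\wh{kQ}_{\ol{\cy}}$, and since $k = \CC$ has characteristic zero we get $\Phi = \frac{1}{d}\,\Phi_\#(\xi)$, i.e. $\Phi \in \im(\Phi_\#)$.

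Next I would upgrade this to a statement about $\Phi_*$, i.e. about the genuine Jacobi ideal rather than its image in $\wh{kQ}_{\ol{\cy}}$. The point is that $\xi$ is induced by the double derivation $\delta := \sum_{a \in Q_1} w_a\, \frac{\partial}{\partial a}$ in the sense that $\xi = \wh{\mu}\circ\delta$; by Diagram (\ref{derivation+}) the cyclic derivation $D := \wh{\mu}\circ\wh{\tau}\circ\delta$ satisfies $\pi(\Phi_*(D)) = \Phi_\#(\xi) = d\,\Phi$, so $\Phi \in \pi(\wh{\rJ}(Q,\Phi))$. Concretely $D = \sum_a w_a D_a$, and one checks directly on cycles that $\sum_a w_a D_a(w) = \big(\sum_s w_{b_s}\big)\, w = d\,w$, giving $\Phi_*(D) = d\,\phi$ again. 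Either way, $\Phi \in \pi(\wh{\rJ}(Q,\Phi))$, so $[\Phi]_\Phi = 0$ and $\Phi$ is quasi-homogeneous by Definition \ref{quasi-homogeneous}.

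I expect no serious obstacle here: the only mild subtlety is bookkeeping the difference between the derivation picture ($\Phi_\#$, image in $\wh{kQ}_{\ol{\cy}}$) and the cyclic-derivation picture ($\Phi_*$, the actual two-sided ideal) — but Lemma \ref{cycprop}(3) and Diagram (\ref{derivation+}) make these compatible, and in any case the direct computation $\sum_a w_a D_a(w) = d\,w$ on standard cycles settles it unconditionally. Division by $d \neq 0$ uses only that $\CC$ has characteristic zero.
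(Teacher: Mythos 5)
Your overall strategy is the paper's: exhibit a weighted Euler derivation and invoke the noncommutative Euler identity. Your first paragraph is correct, and in fact it already suffices: since $\wh{\mu}\circ - : \wh{\dder}_l(\wh{kQ})\to \der_l(\wh{kQ})$ is surjective and Diagram (\ref{derivation+}) commutes, one has $\im(\Phi_{\#})=\pi(\im(\Phi_*))=\pi(\wh{\rJ}(Q,\Phi))$, so $\Phi=\tfrac{1}{d}\Phi_{\#}(\xi)\in\pi(\wh{\rJ}(Q,\Phi))$, which is Definition \ref{quasi-homogeneous}.

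However, your second paragraph --- the ``concrete'' verification you claim settles the matter unconditionally --- contains a genuine computational error. The double derivation $\delta=\sum_{a\in Q_1} w_a\,\frac{\partial}{\partial a}$ does \emph{not} lift the Euler derivation $\xi\colon a\mapsto w_a\,a$: by (\ref{Di}) one has $\mu\circ\frac{\partial}{\partial a}=0$ whenever $s(a)\neq t(a)$, and for a loop $\mu\big(\frac{\partial}{\partial a}(a)\big)=e_{s(a)}$, not $a$; so $\xi\neq\wh{\mu}\circ\delta$. Correspondingly, the identity $\sum_a w_a D_a(w)=d\,w$ is false: since $D_a(w)=\sum_{w=uav}vu$, each summand of $\sum_a w_a D_a(w)$ is a cyclic rotation of $w$ with one arrow \emph{deleted}, hence a path of length $r-1$, not $r$. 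The correct lift of $\xi$ is $\delta=\sum_a w_a\, a * \frac{\partial}{\partial a}$, whose associated cyclic derivation is $\sum_a w_a\, a\cdot D_a$, and the correct Euler identity reads $\pi\big(\sum_a w_a\, a\, D_a(w)\big)=d\,\pi(w)$ --- note that the terms $b_sb_{s+1}\cdots b_rb_1\cdots b_{s-1}$ are only cyclic rotations of $w=b_1\cdots b_r$, so the identity holds after applying $\pi$, not on the nose in $\wh{kQ}$. This corrected element is exactly the paper's one-line proof, $\Phi=\pi\big(\sum_{a\in Q_1}\frac{r_a}{d}\,a\cdot\Phi_*(D_a)\big)$. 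So: keep your first paragraph and the diagram argument, or replace the explicit generator by $\sum_a \frac{w_a}{d}\,a\cdot\Phi_*(D_a)$; as written, the fallback computation you rely on does not hold.
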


\begin{proof}
Suppose that all terms of the potential $\Phi\in \wh{kQ}_{\ol{\cy}}$ have the same degree $d$ with respect to the choice of weight  $|a| =r_a>0$.  Then $\Phi = \pi ( \sum_{a\in Q_1}\frac{r_a}{d} a \cdot \Phi_*(D_a) )$, and so $\Phi$ is quasi-homogeneous.
\end{proof}

\begin{remark}
It is easy to check that the set of quasi-homogeneous potentials of $\wh{kQ}$ are  closed under the action of the group $\mathcal{G}$. So for any weighted-homogeneous $\Phi$, the potential $H(\Phi)$ will be quasi-homogeneous but not weighted-homogeneous for some obvious choices of $H\in \mathcal{G}$. It is a natural question to ask that given a quasi-homogeneous potential $\Phi$ does there exist an element $H\in \cG$ such that $H(\Phi)$ is weighted homogeneous. When $Q$ is a $n$-loop quiver and $k=\CC$, the answer is positive when $\wh{\Lm}(Q,\Phi)$ is finite dimensional (see \cite{HuaZhou}). This result can be viewed as the noncommutative analogue of the famous theorem of Saito \cite{Sait}. 
\end{remark}

Recall from the hypersurface singularity theory that a  power series $f\in k[[x_1,\ldots, x_n]]$ is quasi-homogeneous if $f$ lies in the ideal generated by the derivatives  $f_{x_1},\ldots, f_{x_n}$. By Lemma \ref{abelianization}, the abelianization of a quasi-homogeneous potential of $\wh{kQ}$ (at any node) is quasi-homogeneous. However, the next example tells us that the converse is not true, i.e., a non-quasi-homogenous potential may also have quasi-homogeneous abelianization.

\begin{example}\label{non-quasi-homo}
Let $k=\CC$ and $Q$ the quiver with one node and two loops denoted by $a$ and $b$ respectively. So $\wh{\CC Q}$ is the complete free algebra $\CC\lgg a, b\rgg$ and $\CC Q$ is the free algebra $\CC\langle a, b\rangle$. Consider the potential $$\Phi=a^2b-\sum_{r\geq 4}(-1)^r\frac{b^r}{r}.$$  
It is not hard to check that the abelianization of $\Phi$ is quasi-homogeneous. We proceed to show that $\Phi $ itself is not quasi-homogeneous.
By a direct computation,
\[
\Phi_*(D_a)= ab +ba \quad \text{and} \quad \Phi_*(D_b) = (a^2 +a^2b -b^3)(1+b)^{-1}.
\]
So 
\[
\wh{\Lm}(Q,\Phi) = \frac{\CC \lgg a, b\rgg }{(ab+ba, a^2+a^2b-b^3)^{cl}}.
\]
Consider the algebra
\[
S=  \frac{\CC\lg a,b\rg}{(ab+ba,a^2-b^3+a^2b)}.
\]
A direct computation shows that $a^3=0$ in $S$ and  all ambiguities of the rewriting system
\[\{~ ba \mapsto -ab, \quad b^3\mapsto a^2+a^2b, \quad a^3\mapsto 0 ~\}\]
are resolvable. By the  Diamond Lemma (see \cite[Theorem 1.2]{Berg}), $S$ is nine dimensional with basis
\[
1, ~ a, ~ b, ~ a^2, ~ ab,~ b^2, ~ a^2b, ~ ab^2, ~ a^2b^2.
\]
Moreover, $ab^3= b^6=0$ in $S$. In particular, $S$ is a local algebra. By a similar argument of the proof of Lemma \ref{superpotential-compare}, the canonical morphism  $S\to \wh{\Lm}(Q,\Phi)$ is an isomorphism. By the division algorithm with respect to the above rewriting system, 
\[
\Phi  =  \frac{3}{4} a^2b -\frac{1}{20} a^2b^2 \neq0 \quad \text{in} ~~ \wh{\Lm}(Q,\Phi).
\]
Note that the commutator space $[\wh{\Lm}(Q,\Phi), \wh{\Lm}(Q,\Phi)]$ is spanned by $ab$, $a^2b$ and $ab^2$. So
\[
[\Phi] = -\frac{1}{20} a^2b^2 \neq0 \quad \text{in} ~~ \wh{\Lm}(Q,\Phi)_\cy.
\]
Thus by definition $\Phi$ is not a quasi-homogeneous potential. 
\end{example}


\subsection{Bootstrapping on Jacobi ideals}

The remaining of this section aims to prove the nc Mather-Yau theorem (Theorem \ref{ncMY}).  This subsection is devoted to establish a bootstrap relation between the Jacobi ideal  $\wh{\rJ}(Q,\Phi)$ and the higher Jacobi ideal $\wh{\fm}\cdot \wh{\rJ}(Q,\Phi)+ \wh{\rJ}(Q,\Phi) \cdot \wh{\fm}$ for potentials $\Phi\in \wh{kQ}_{\ol{\cy}}$. The prototype of this argument in the complex analytic case traces back to \cite{MY}.

For our end, we need  the following  separation lemma for power series rings.

\begin{lemma}[Separation lemma]\label{separation}
Let $\mathcal{P}:= k[[x_1,\ldots, x_n]]$ be the power series ring over  $k$ and let $\mathfrak{a}$ be the ideal generated by $x_1,\ldots, x_n$. Let $f_1,\ldots, f_n$ be a sequence of $n$ elements in $\mathfrak{a}$. Suppose that $k$ is a field and the quotient $k$-algebra $\mathcal{P}/(f_1,\ldots, f_n)$ is finite dimensional over $k$.  Then there exists a homomorphism $\eta: \mathcal{P}\to \ol{k}[[T]]$ of $k$-algebras, where $\ol{k}$ is the algebraic closure of $k$, satisfying that  $\eta(\mathfrak{a}) \subseteq (T)$, $\eta(f_1)\neq 0$ and $\eta(f_i)=0$ for $i>1$.
\end{lemma}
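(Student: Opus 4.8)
The plan is to extract the homomorphism $\eta$ from a suitable one-dimensional component of a variety cut out by all but one of the $f_i$. First I would pass to the algebraic closure $\ol{k}$ and work geometrically: since $\cP/(f_1,\ldots,f_n)$ is finite dimensional, the origin is an isolated point of the closed subscheme $V(f_1,\ldots,f_n)\subseteq \Spec\cP$ (equivalently, an isolated point of the common zero locus in a small formal/analytic neighbourhood, but we can argue purely algebraically with the completed local ring). The key idea is to drop the first equation: consider the ideal $\a':=(f_2,\ldots,f_n)$. Because cutting by the one further element $f_1$ brings the dimension of $\cP/(f_1,\ldots,f_n)$ down to $0$, Krull's principal ideal theorem forces every irreducible component of $V(\a')$ through the closed point to have dimension $\le 1$; and if $\cP/\a'$ were itself $0$-dimensional then $f_1$ would have to vanish on it, contradicting... well, not immediately, so instead I argue that $\dim \cP/\a' \ge 1$: if it were $0$, then $\a'\supseteq \a^N$ for some $N$, but $\a'$ is generated by $n-1$ elements, and by a standard bound (the height of an ideal is at most its number of generators, so $\operatorname{ht}\a'\le n-1 < n$) it cannot be $\a$-primary in the $n$-dimensional regular local ring $\cP$. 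Hence $V(\a')$ has a component $C$ through the origin with $\dim C = 1$.

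Next I would produce the map to $\ol k[[T]]$ as the normalization/parametrization of that curve. Let $\fp\subset\cP$ be the minimal prime corresponding to $C$, so $\cP/\fp$ is a one-dimensional complete local domain with residue field $\ol k$; its normalization is a complete DVR, finite over $\cP/\fp$, with residue field a finite extension of $\ol k$, hence equal to $\ol k$, and therefore (by Cohen's structure theorem) isomorphic to $\ol k[[T]]$. Composing $\cP\twoheadrightarrow\cP/\fp\hookrightarrow \ol k[[T]]$ gives a $k$-algebra homomorphism $\eta$ with $\eta(\a)\subseteq(T)$ (the maximal ideal maps into the maximal ideal) and $\eta(f_i)=0$ for $i\ge 2$ (since $f_i\in\a'\subseteq\fp$). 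It remains to check $\eta(f_1)\ne 0$, i.e. $f_1\notin\fp$: if $f_1\in\fp$ then $C\subseteq V(f_1,\ldots,f_n)$, so $V(f_1,\ldots,f_n)$ would contain a positive-dimensional subscheme through the origin, contradicting the hypothesis that $\cP/(f_1,\ldots,f_n)$ is finite dimensional (equivalently, that the origin is isolated in $V(f_1,\ldots,f_n)$). So $\eta$ has all the required properties.

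One technical point I would be careful about is the possibility that \emph{every} minimal prime of $\a'$ through the origin contains $f_1$; the argument above rules this out because such a prime $\fp$ has $\dim\cP/\fp\ge 1$ while $\fp\supseteq(f_1,\ldots,f_n)$ would give $\dim\cP/(f_1,\ldots,f_n)\ge 1$. But I should make sure I am choosing $\fp$ to be a minimal prime of $\a'$ of \emph{coheight exactly $1$} (it exists by the dimension count) and not merely some embedded or higher-dimensional prime, and that it is actually contained in $\a$ — which holds automatically since $\a$ is the unique maximal ideal of the local ring $\cP$.

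The main obstacle, and the step deserving the most care, is the dimension-theoretic claim that $V(f_2,\ldots,f_n)$ has a $1$-dimensional component through the origin: this uses both directions of the interplay between number of generators and height (Krull's height theorem to get $\dim\le 1$, and the fact that $n-1$ elements cannot generate an $\a$-primary ideal in a regular local ring of dimension $n$ to get $\dim\ge 1$), together with the hypothesis pinning down $\cP/(f_1,\ldots,f_n)$ as Artinian to guarantee $f_1$ does not vanish on that component. Everything after that — the existence and shape of the normalization of a one-dimensional complete local domain with algebraically closed residue field — is standard commutative algebra (Cohen structure theorem, finiteness of normalization for complete local rings, e.g. via excellence or Nagata).
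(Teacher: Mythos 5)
Your proposal is correct and follows essentially the same route as the paper's proof: drop $f_1$, show $\dim\mathcal{P}/(f_2,\ldots,f_n)=1$ by Krull's height theorem plus the Artinian hypothesis, pass to a minimal prime $\fp$ (necessarily of coheight $1$, with $f_1\notin\fp$), and realize the normalization of the complete one-dimensional local domain $\mathcal{P}/\fp$ as a formal power series ring in one variable over a finite extension of $k$ inside $\ol{k}$. The only cosmetic difference is that you base-change to $\ol{k}$ at the start, whereas the paper works over $k$ and only identifies the residue field of the normalization with a subfield of $\ol{k}$ at the end; both are fine.
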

\begin{proof}
Let $I$ be the ideal of $\mathcal{P}$ generated by $f_2,\ldots, f_n$. By an easy (Krull) dimension argument, one obtain that $\dim \mathcal{P}/I = 1$ because $\dim \mathcal{P} =n$ and $\dim \mathcal{P}/(f_1,\ldots, f_n)=0$. Fix a minimal prime ideal $\mathfrak{p}$  over $I$ and  let $A:= \mathcal{P}/\mathfrak{p}$. Clearly, $f_1\not \in \mathfrak{p}$ and  $A$ is a noetherian local domain with residue field $k$ and  of  dimension $1$.  Moreover, $A$ is complete by \cite[Tag 0325]{Stack}. Let $B$ be the integral closure of $A$ in the fraction field of $A$. By standard commutative ring theory, $B$ is a domain of dimension $1$ and some maximal ideal of $B$ contains $\mathfrak{a}$.  By \cite[Tag 032W]{Stack}, $B$ is a finite module over $A$. Then, by \cite[Tag 0325]{Stack} again, $B$ is also a noetherian complete local ring. It follows that the inclusion map $A\to B$ is a finite local homomorphism. So the residue field $\mathbb{K}$ of $B$ is of finite dimensional over $k$ and thereof we may identify $\mathbb{K}$ as a $k$-subalgebra of $\ol{k}$. It is well-known that a Noetherian normal local domain of dimension $1$ is regular. By \cite[Tag 0C0S]{Stack}, $B\cong \mathbb{K}[[T]]$ as $\mathbb{K}$-algebras.  Let $\eta: \mathcal{P}\to \ol{k}[[T]]$ be the composition of the following sequence of homomorphisms of $k$-algebras
\[
\mathcal{P} \xrightarrow{} A \xrightarrow{\subseteq } B \xrightarrow{\cong} \mathbb{K}[[T]] \xrightarrow{\subseteq} \ol{k}[[T]],
\]
where the first one is the projection map. Clearly, $\eta$ satisfies the requirements.
\end{proof}

\begin{prop}[Bootstrapping]\label{Bootstrapping}
Let $\Phi\in\wh{kQ}_{\ol{\cy}}$ be a potential of order $\geq 2$. Suppose that $k$ is a field and the Jacobi algebra $\Lm(\wh{Q},\Phi)$ is finite dimensional. Then
\begin{enumerate}
\item[$(1)$] $\Phi \in \pi(\wh{\rJ}(Q,\Phi))$ (i.e. $\Phi$ is quasi-homogeneous) if and only if $\Phi\in \pi(\wh{\fm}\cdot \wh{\rJ}(Q,\Phi)+\wh{\rJ}(Q,\Phi)\cdot \wh{\fm})$.
\item[$(2)$] For any potential $\Psi\in \wh{F}_{\ol{\cy}}$ of order $\geq 2$ with $\wh{\rJ}(Q,\Psi) = \wh{\rJ}(Q,\Phi)$, it follows that $\Phi-\Psi\in \pi(\wh{\rJ}(Q,\Phi))$ if and only if $\Phi-\Psi\in \pi(\wh{\fm}\cdot\wh{\rJ}(Q,\Phi)+\wh{\rJ}(Q,\Phi)\cdot \wh{\fm})$.
\end{enumerate}
\end{prop}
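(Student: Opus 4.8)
The plan is to prove the two nontrivial implications, the converses being immediate from $\wh{\fm}\cdot\wh{\rJ}(Q,\Phi)+\wh{\rJ}(Q,\Phi)\cdot\wh{\fm}\subseteq\wh{\rJ}(Q,\Phi)$. Write $W:=\pi\big(\wh{\fm}\cdot\wh{\rJ}(Q,\Phi)+\wh{\rJ}(Q,\Phi)\cdot\wh{\fm}\big)$. I will treat $(1)$ and $(2)$ together: put $\Theta:=\Phi$ in case $(1)$ and $\Theta:=\Phi-\Psi$ in case $(2)$; in both cases the hypothesis is $\Theta\in\pi(\wh{\rJ}(Q,\Phi))$ and the goal is $\Theta\in W$ (in case $(2)$ the assumption $\wh{\rJ}(Q,\Psi)=\wh{\rJ}(Q,\Phi)$ makes $W$ unambiguous). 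Using Diagram (\ref{derivation+}) together with the representation (\ref{representation-cyc}) of cyclic derivations and the cyclicity of $\pi$, every element of $\wh{\rJ}(Q,\Phi)=\im(\Phi_*)$ projects under $\pi$ to a convergent sum $\sum_{a\in Q_1}\pi\big(\Phi_*(D_a)\,f_a\big)$ with $f_a\in e_{s(a)}\wh{kQ}\,e_{t(a)}$; so I fix such a presentation $\Theta=\pi\big(\sum_a\Phi_*(D_a)f_a\big)$. Decompose $f_a=c_a e_{s(a)}+f_a^{+}$ with $c_a\in k$ (necessarily $c_a=0$ unless $a$ is a loop) and $f_a^{+}\in\wh{\fm}$. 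Since $\pi\big(\Phi_*(D_a)\cdot\wh{\fm}\big)\subseteq\pi(\wh{\rJ}(Q,\Phi)\cdot\wh{\fm})\subseteq W$, one gets $\Theta\equiv\pi\big(\sum_{a\ \mathrm{loop}}c_a\Phi_*(D_a)\big)\pmod W$; in particular, if $\Theta\notin W$ then $c_{a_0}\neq0$ for some loop $a_0$ at some node $i$.

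Assume this, aiming for a contradiction. Then I would abelianize at the node $i$: apply the $k$-algebra map $\iota^{(i)}\colon\wh{kQ}\to k[[Q_1^{(ii)}]]=:\mathcal{P}$. By Lemma \ref{abelianization}, $\iota^{(i)}(\Phi_*(D_a))=\big(\ol{\iota}^{(i)}(\Phi)\big)_a=:g_a$ for $a\in Q_1^{(ii)}$, while $\iota^{(i)}$ annihilates $\Phi_*(D_a)f_a$ whenever $a$ is not a loop at $i$ (such a cyclic word must use an arrow not supported at $i$) or is a loop at a node $\neq i$. Hence, writing $g:=\ol{\iota}^{(i)}(\Phi)\in\mathfrak{a}^{2}$ (with $\mathfrak{a}$ the maximal ideal of $\mathcal{P}$, so that $g_a\in\mathfrak a$ for all $a$) and $\varepsilon_a:=\iota^{(i)}(f_a^{+})\in\mathfrak{a}$, the presentation of $\Theta$ becomes, in $\mathcal P$,
\[
g=\sum_{a\in Q_1^{(ii)}}(c_a+\varepsilon_a)\,g_a \quad\text{in case }(1),\qquad\qquad g-h=\sum_{a\in Q_1^{(ii)}}(c_a+\varepsilon_a)\,g_a \quad\text{in case }(2),
\]
where $h:=\ol{\iota}^{(i)}(\Psi)$. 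By Lemma \ref{abelianization} the commutative algebra $\mathcal{P}/(g_a\mid a\in Q_1^{(ii)})$ is a quotient of the finite-dimensional algebra $\wh{\Lm}(Q,\Phi)$, hence finite dimensional; moreover in case $(2)$ the equality $\wh{\rJ}(Q,\Psi)=\wh{\rJ}(Q,\Phi)$ gives $\big(\iota^{(i)}(\Psi_*(D_a))\mid a\big)=(g_a\mid a)$ in $\mathcal P$. Now apply the Separation Lemma \ref{separation} with $g_{a_0}$ as the distinguished generator: it furnishes a $k$-algebra homomorphism $\eta\colon\mathcal{P}\to\ol{k}[[T]]$ with $\eta(\mathfrak{a})\subseteq(T)$, $\eta(g_{a_0})\neq0$ and $\eta(g_a)=0$ for all $a\in Q_1^{(ii)}\setminus\{a_0\}$.

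Finally I would extract the numerical contradiction, exactly as in Mather--Yau. Because $c_{a_0}\neq0$, the element $\eta(c_{a_0}+\varepsilon_{a_0})$ is a unit of $\ol{k}[[T]]$, so $\mathrm{ord}_T\,\eta(g)=\mathrm{ord}_T\,\eta(g_{a_0})=:d$ (and in case $(2)$, $\mathrm{ord}_T\,\eta(g-h)=d$), where $d$ is a finite positive integer since $g_{a_0}\in\mathfrak a$ and $\eta(g_{a_0})\neq0$. On the other hand, applying the Chain Rule (Lemma \ref{chain-rule}) to the composite $H:=\eta\circ\iota^{(i)}\colon\wh{kQ}\to\ol{k}[[T]]$, where $\ol{k}[[T]]$ is the complete path algebra of the one-loop quiver, and using $H(D_a\phi)=\eta(g_a)$ for a representative $\phi$ of $\Phi$ (Lemma \ref{abelianization}) together with $D_T=\tfrac{d}{dT}$ on $\ol{k}[[T]]$, one obtains
\[
\tfrac{d}{dT}\,\eta(g)=\sum_{a\in Q_1^{(ii)}}\eta(a)'\cdot\eta(g_a)=\eta(a_0)'\cdot\eta(g_{a_0}),
\]
and in case $(2)$ the analogous identity $\tfrac{d}{dT}\eta(g-h)=\eta(a_0)'\big(\eta(g_{a_0})-\eta(h_{a_0})\big)$, in which $\mathrm{ord}_T\,\eta(h_{a_0})\geq d$ because $h_{a_0}:=\iota^{(i)}(\Psi_*(D_{a_0}))$ lies in $(g_a\mid a)$ while $\eta$ kills every $g_a$ with $a\neq a_0$. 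Comparing $T$-adic orders: the left-hand sides have order $d-1$ (here one uses $\mathrm{char}\,k=0$, so differentiation lowers the order by exactly one), while the right-hand sides have order $\geq d$ — absurd. Therefore $\Theta\in W$, which proves both $(1)$ and $(2)$.

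The step I expect to be the real obstacle is the bookkeeping in the first two paragraphs: organizing the passage from ``$\Theta$ lies in the Jacobi ideal'' to the normal form $\Theta=\pi(\sum_a\Phi_*(D_a)f_a)$ (including convergence of the $f_a$), tracking precisely which terms are swallowed by $W$, and — crucially — checking that abelianization at $i$ annihilates exactly the ``off-node'' part of the presentation, so that the identity survives verbatim in $\mathcal P$ and the partial derivatives $g_a$ really are the images of $\Phi_*(D_a)$. Once these points are settled, the Separation Lemma and the order comparison reproduce the classical bootstrap of \cite{MY} essentially word for word.
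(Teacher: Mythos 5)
Your proof is correct and follows essentially the same route as the paper's: reduce to a presentation of $\Theta$ in which some loop $a_0$ carries a nonzero constant coefficient $c_{a_0}$, abelianize at that node, apply the Separation Lemma to single out $g_{a_0}$, and derive a contradiction by comparing $T$-adic orders via the chain rule (the paper treats (1) and (2) separately and in (2) compares orders of the two Jacobi ideals in $\ol{k}[[T]]$, but this is only a bookkeeping difference). The one slip is the displayed identity in case (2): $\tfrac{d}{dT}\eta(h)=\sum_{a\in Q_1^{(ii)}}\eta(a)'\eta(h_a)$, not just the $a_0$-term, since $\eta$ kills the $g_a$ but not the $h_a$ for $a\neq a_0$ --- however your own observation that every $h_a$ lies in $(g_a\mid a)$, hence every $\eta(h_a)$ has order $\geq d$, bounds all the extra terms and the order contradiction goes through unchanged.
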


\begin{proof}
The if part of both statements  are obvious. For the only if part, we prove by contradiction. To simplify the notation, let $\phi_a=\Phi_*(D_a)$ and $\psi_a= \Psi_*(D_a)$ for $a\in Q_1$. Write $Q_1=A\sqcup L$, where $A$ consists of arrows between distinct nodes and $L$ consists of loops. For a power series $f\in \ol{k}[[T]]$, let $o(f)$ denote the order of $f$. It  is defined to be the minimal degree of terms that occurs in $f$. 

For every node $i$, let $\iota^{(i)}: \wh{kQ}\to k[[Q_1^{(ii)}]]$ be the abelianization map constructed before Lemma \ref{abelianization}. For every $a\in Q^{(ii)}_1$, let $f_a = \iota^{(i)}(\phi_a)$. Clearly, $k[[Q_1^{(ii)}]]/ (f_a: a\in Q^{(ii)}_1)$ is finite dimensional over $k$. If the set $Q^{(ii)}_1\neq \emptyset$, then by Lemma \ref{separation},  for any  arrow $b\in Q^{(ii)}_1$ one may choose a local homomorphism $\eta_b^{(i)}: k[[Q_1^{(ii)}]] \to \ol{k}[[T]]$ such that $\eta_b^{(i)}(f_b) \neq 0$ but $\eta_b^{(i)}(f_a) =0$ for $a\neq b$.  Define a $k$-algebra homomorphism $\omega_b^{(i)}: \wh{kQ} \to \ol{k}[[T]]$ by $$\omega_b^{(i)}  = \eta_b^{(i)} \circ \iota^{(i)}.$$   Clearly,  $\omega_b^{(i)}(\wh{\fm}) \subseteq (T)$, $\omega_b^{(i)}(e_j) = \delta_{ij}$, $\omega_b^{(i)}(\phi_b)\neq 0$ and $\omega_b^{(i)} (\phi_a) =0$ for all $a\neq b$. Moreover,  $\omega_b^{(i)}$ factors through $\pi: \wh{kQ}\to \wh{kQ}_{\ol{\cy}}$ by a linear map $\overline{\omega}_b^{(i)}: \wh{kQ}_{\ol{\cy}} \to \ol{k}[[T]]$. By the chain rule,  
\[
\frac{d~ \ol{\omega}_b^{(i)}(\Phi)}{d~T} = \sum_{s(a)=t(a)=i} \omega_b^{(i)}(\phi_a)\cdot \frac{d~ \omega_b^{(i)}(a)}{d~ T} = \omega_b^{(i)}(\phi_b)\cdot \frac{d~ \omega_b^{(i)}(b)}{d~ T}.
\]
Consequently, $o(\omega_b^{(i)}(\phi_b))< o(\ol{\omega}_b^{(i)}(\Phi))$.

To see the only if part of (1), assume that $\Phi\in \pi(\wh{\rJ}(Q,\Phi))$, i.e.
\[
\Phi =\pi(\sum_{a\in Q_1} h_a \cdot \phi_a  )=\pi(\sum_{a\in A} h_a \cdot \phi_a+\sum_{a\in L} h_a \cdot \phi_a ), \quad \text{where }~ h_a \in e_{s(a)} \cdot \wh{kQ}\cdot  e_{t(a)}.
\]
Clearly, $h_a\in \wh{\fm}$ for $a\in A$. Suppose that for some $b \in L$ such that $s(b)=t(b)=i$,
\[
 h_{b}=\sum_w \lambda_{b,w}\cdot w\in e_i\cdot \wh{kQ}\cdot e_i,
\] 
where $w$ runs over cycles based at $i$ and $\lambda_{b,w}\in k$, having $\lambda_{b,e_i}\neq 0$. Since
\[
\ol{\omega}_b^{(i)}(\Phi) = \sum_{a\in Q_1} \omega_b^{(i)}(h_a)\cdot \omega_b^{(i)}(\phi_a) = \omega_b^{(i)}(h_b)\cdot \omega_b^{(i)}(\phi_b)
\]
and $\omega_b^{(i)}(h_b)$ is a unit of $\ol{k}[[T]]$, we then obtain $o(\omega_b^{(i)}(\phi_b))=o(\ol{\omega}_b^{(i)}(\Phi))$, which is absurd.

To see the only if part of (2), let
\[
\Psi-\Phi=\pi(\sum_{a\in Q_1} g_a \cdot \phi_a )=\pi(\sum_{a\in A} g_a \cdot \phi_a+\sum_{a\in L} g_a \cdot \phi_a ), \quad \text{where }~ g_a \in e_{s(a)} \cdot \wh{kQ}\cdot  e_{t(a)}
\]
Again, it is clear that $g_a\in \wh{\fm}$ for $a\in A$. Suppose that for  some $b \in L$ such that $s(b)=t(b)=i$,
\[
 g_{b}=\sum_w \lambda'_{b,w}\cdot w\in e_i\cdot \wh{kQ}\cdot e_i,
\] 
where $w$ runs over cycles based at $i$ and $\lambda'_{b,w}\in k$, having $\lambda'_{b,e_i}\neq 0$. 
Since
\[\ol{\omega}_b^{(i)} (\Psi) = \ol{\omega}_b^{(i)}(\Phi) + \sum_{a\in Q_1}\omega_b^{(i)}(g_a)\cdot \omega_b^{(i)}(\phi_a) = \ol{\omega}_b^{(i)}(\Phi) + \omega_b^{(i)}(g_b)\cdot \omega_b^{(i)}(\phi_b)\] and $\omega_b^{(i)}(g_b)$ is a unit of $\ol{k}[[T]]$, we then obtain
$o(\ol{\omega}_b^{(i)}(\Psi)) = o(\omega_b^{(i)}(\phi_b))$. Note that every ideal $J$ of $\ol{k}[[T]]$ is principal; write $o(J)$ for the order of its generator. Since
\[
\frac{d~ \ol{\omega}_b^{(i)}(\Psi)}{d~T} = \sum_{s(a)=t(a)=i} \omega_b^{(i)}(\psi_a)\cdot \frac{d~ \omega_b^{(i)}(a)}{d~ T},
\]
we obtain $o(\omega_b^{(i)}(\psi_a))< o(\ol{\omega}_b^{(i)}(\Psi))$ for at least one loop $a$ such that $s(a)=t(a)=i$. Hence
\[
o\bigg(\big(\omega_b^{(i)}(\psi_a): a\in Q^{(ii)}_1\big)\bigg)< o(\ol{\omega}_b^{(i)}(\Psi)) = o(\omega_b^{(i)}(\phi_b)) = o\bigg( \big(\omega_b^{(i)}(\phi_a): a\in Q^{(ii)}_1\big) \bigg).
\]
But one has $$\big(\omega_b^{(i)}(\psi_a): a\in Q^{(ii)}_1\big) = \omega_b^{(i)} (\wh{\rJ}(Q,\Psi))  = \omega_b^{(i)} (\wh{\rJ}(Q,\Phi)) =  \big(\omega_b^{(i)}(\phi_a): a\in Q^{(ii)}_1\big),$$
which yields a contradiction.
\end{proof}


\subsection{Finite determinacy}\label{sec:fd}
Given an integer $r\geq0$, the \emph{$r$-th jet space} of $\wh{kQ}$ is defined to be the quotient $l$-algebra $J^r:=\wh{kQ}/\wh{\fm}^{r+1}$. Clearly, the projection map $\wh{kQ}\to J^r$ induces a canonical surjective map
\[q_r: \wh{kQ}_{\ol{\cy}} \to J^r_\cy:=J^r/[J^r,J^r]\]
with kernel $\pi(\wh{\fm}^{r+1})$.  The image of a potential $\Phi \in \wh{kQ}_{\ol{\cy}}$ under this map is denoted by $\Phi^{(r)}$.  For two potentials $\Psi_1,\Psi_2\in \wh{kQ}_{\ol{\cy}}$,  $\Psi_1^{(r)} = \Psi_2^{(r)}$ if and only if their canonical representatives share the same coefficients for standard cycles of length $\leq r$.

\begin{prop}\label{isomorphism-criterior-0}
Let $\Phi, \Psi\in \wh{kQ}_{\ol{\cy}}$ be potentials  satisfy  that $\Phi^{(r)} = \Psi^{(r)}$ in $J_\cy^r$ for some $r>0$.  Suppose $\wh{\rJ}(Q,\Phi) \supseteq \wh{\fm}^{r-1}$. Then
\[\wh{\rJ}(Q,\Phi)=\wh{\rJ}(Q,\Psi).\] Consequently, $\wh{\Lm}(Q, \Phi) = \wh{\Lm}(Q, \Psi)$ as $l$-algebras.
\end{prop}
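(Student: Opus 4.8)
The plan is to compare the two Jacobi ideals degree by degree, using the hypothesis $\wh{\rJ}(Q,\Phi)\supseteq \wh{\fm}^{r-1}$ as the engine that makes the finite-jet coincidence $\Phi^{(r)}=\Psi^{(r)}$ propagate to all orders. First I would record the consequence of $\Phi^{(r)}=\Psi^{(r)}$ in terms of representatives: there exist representatives $\phi,\psi\in\wh{kQ}$ of $\Phi,\Psi$ with $\psi-\phi\in\wh{\fm}^{r+1}$, hence $\psi=\phi+\omega$ with $\omega\in\wh{\fm}^{r+1}$. Applying the cyclic derivations $D_a$ (which are continuous and lower degree by exactly one) gives
\[
\Psi_*(D_a)=\Phi_*(D_a)+D_a(\omega),\qquad D_a(\omega)\in\wh{\fm}^{r}.
\]
Since by hypothesis $\wh{\fm}^{r-1}\subseteq\wh{\rJ}(Q,\Phi)$, in particular $D_a(\omega)\in\wh{\fm}^{r}\subseteq\wh{\rJ}(Q,\Phi)$, so every generator $\Psi_*(D_a)$ of (a dense sub-ideal of) $\wh{\rJ}(Q,\Psi)$ lies in $\wh{\rJ}(Q,\Phi)$. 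By Lemma~\ref{Jacobi-closed}, $\wh{\rJ}(Q,\Psi)=(\{\Psi_*(D_a)\mid a\in Q_1\})^{cl}$ and $\wh{\rJ}(Q,\Phi)$ is closed, so this already yields the inclusion $\wh{\rJ}(Q,\Psi)\subseteq\wh{\rJ}(Q,\Phi)$.

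For the reverse inclusion I would run the same computation with the roles of $\phi$ and $\psi$ interchanged, writing $\phi=\psi-\omega$; this gives $\Phi_*(D_a)=\Psi_*(D_a)-D_a(\omega)$ with $D_a(\omega)\in\wh{\fm}^r$, but now I need $\wh{\fm}^{r-1}\subseteq\wh{\rJ}(Q,\Psi)$, which is not yet known. The fix is to first upgrade $\wh{\rJ}(Q,\Psi)\supseteq\wh{\fm}^{r-1}$ from the already-proven inclusion: from $\wh{\rJ}(Q,\Psi)\subseteq\wh{\rJ}(Q,\Phi)$ we do not immediately get this, so instead I would argue directly that $\wh{\rJ}(Q,\Psi)+\wh{\fm}^{r}\supseteq\wh{\fm}^{r-1}$. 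Indeed $\Psi_*(D_a)\equiv\Phi_*(D_a)\ \mathrm{mod}\ \wh{\fm}^{r}$ for all $a$, and the $\Phi_*(D_a)$ together with all products $u\cdot\Phi_*(D_a)\cdot v$ generate $\wh{\rJ}(Q,\Phi)\supseteq\wh{\fm}^{r-1}$ modulo higher degree; hence the corresponding elements built from $\Psi_*(D_a)$ span $\wh{\fm}^{r-1}/\wh{\fm}^{r}$, i.e. $\wh{\rJ}(Q,\Psi)+\wh{\fm}^{r}\supseteq\wh{\fm}^{r-1}$. Now the Nakayama Lemma (Lemma~\ref{Nakayama}) with $N=\wh{\rJ}(Q,\Psi)$ gives $\wh{\rJ}(Q,\Psi)\supseteq\wh{\fm}^{r-1}$.

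With $\wh{\fm}^{r-1}\subseteq\wh{\rJ}(Q,\Psi)$ in hand, the symmetric computation goes through: $\Phi_*(D_a)=\Psi_*(D_a)-D_a(\omega)$ with $D_a(\omega)\in\wh{\fm}^r\subseteq\wh{\fm}^{r-1}\subseteq\wh{\rJ}(Q,\Psi)$, so $\Phi_*(D_a)\in\wh{\rJ}(Q,\Psi)$ for all $a$, and closedness of $\wh{\rJ}(Q,\Psi)$ together with Lemma~\ref{Jacobi-closed} gives $\wh{\rJ}(Q,\Phi)\subseteq\wh{\rJ}(Q,\Psi)$. Combining the two inclusions yields $\wh{\rJ}(Q,\Phi)=\wh{\rJ}(Q,\Psi)$, and dividing $\wh{kQ}$ by this common ideal gives $\wh{\Lm}(Q,\Phi)=\wh{\Lm}(Q,\Psi)$ as $l$-algebras, as claimed.

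\textbf{Main obstacle.} The delicate point is the ``bootstrapping'' step where I promote the a priori weaker statement $\wh{\rJ}(Q,\Psi)+\wh{\fm}^r\supseteq\wh{\fm}^{r-1}$ to the honest inclusion $\wh{\rJ}(Q,\Psi)\supseteq\wh{\fm}^{r-1}$: one must be careful that $\wh{\fm}^{r-1}$ is finitely generated as a (one-sided) $\wh{kQ}$-module and that $\wh{\fm}$ sits inside the Jacobson radical, so that the noncommutative Nakayama lemma as stated in Lemma~\ref{Nakayama} genuinely applies. The other point requiring care is bookkeeping of degrees: $D_a$ lowers degree by exactly one on homogeneous elements, so $\omega\in\wh{\fm}^{r+1}\Rightarrow D_a(\omega)\in\wh{\fm}^{r}$, and it is $\wh{\fm}^{r}\subseteq\wh{\fm}^{r-1}$ — not $\wh{\fm}^{r+1}$ — that needs to be absorbed into the Jacobi ideal, which is exactly why the hypothesis is phrased with $\wh{\fm}^{r-1}$ rather than $\wh{\fm}^{r}$.
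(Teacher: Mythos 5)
Your proposal is correct and follows essentially the same route as the paper's proof: both rest on the observation that cyclic derivations drop degree by one (so the two Jacobi ideals agree modulo $\wh{\fm}^r$) combined with the Nakayama Lemma (Lemma \ref{Nakayama}) to bootstrap $\wh{\rJ}(Q,\Psi)\supseteq\wh{\fm}^{r-1}$ from $\wh{\rJ}(Q,\Psi)+\wh{\fm}^r\supseteq\wh{\fm}^{r-1}$. The paper merely packages your two inclusions into the single chain $\wh{\rJ}(Q,\Psi)+\wh{\fm}^{r}=\wh{\rJ}(Q,\Phi)+\wh{\fm}^{r}=\wh{\rJ}(Q,\Phi)$, which avoids any explicit appeal to Lemma \ref{Jacobi-closed} and the generators $\Psi_*(D_a)$.
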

\begin{proof}
Note that  $D(\wh{\fm}^s)\subseteq \wh{\fm}^{s-1}$ for any $D\in \wh{\cder}_l (\wh{kQ}) $ and any $s\geq 1$. It follows that
\begin{align*}
\wh{\rJ}(Q,\Psi)+\wh{\fm}^{r}= \wh{\rJ}(Q,\Phi)+ \wh{\fm}^{r} \supseteq \wh{\fm}^{r-1}.
\end{align*}
By Lemma \ref{Nakayama}, we also have $\wh{\rJ}(Q,\Psi) \supseteq \wh{\fm}^{r-1}$ and consequently
\[
\wh{\rJ}(Q,\Psi) = \wh{\rJ}(Q,\Psi)+\wh{\fm}^{r}= \wh{\rJ}(Q,\Phi)+ \wh{\fm}^{r} = \wh{\rJ}(Q,\Phi).
\]
The equality of Jacobi algebras is just by definition.
\end{proof}

Given an integer $r\geq0$, let $\mathcal{G}^r$ be the group of all $l$-algebra automorphisms of $J^r=\wh{kQ}/\wh{\fm}^{r+1}$ preserving $\wh{\fm}/\wh{\fm}^{r+1}$. Clearly, the canonical map $\mathcal{G}\to \mathcal{G}^r$ is surjective. A potential $\Phi\in \wh{kQ}_{\ol{\cy}}$ is called \emph{$r$-determined} (with respect to $\mathcal{G}$)  if $\Phi^{(r)}\in \cG^r\cdot \Psi^{(r)}$ implies $\Phi \sim \Psi $ for all $\Psi\in \wh{kQ}_{\ol{\cy}}$. Clearly, it is  equivalent to the condition that $\Phi^{(r)}=\Psi^{(r)}$ implies $\Phi\sim\Psi$ for all $\Psi \in \wh{kQ}_{\ol{\cy}}$. If $\Phi$ is $r$-determined for some $r\geq 0$ then it is called \emph{finitely determined} (with respect to $\mathcal{G}$).

\begin{remark}
If $\Phi\in \wh{kQ}_{\ol{\cy}}$ is $r$-determined for some integer $r\geq0$ then $\Phi \sim \Phi^{(r)}$.
\end{remark}

This subsection is devoted to prove the following theorem, which can be viewed as a noncommutative  analogue of Mather's infinitesimal criterion \cite[Theorem 1.2]{Wall}.

\begin{theorem}[Finite determinacy] \label{finite-determinacy}
Let $Q$ be a finite quiver and  $\Phi\in \wh{\CC Q}_{\ol{\cy}}$ a potential. If the Jacobi algebra $\wh{\Lm}(Q,\Phi)$ is finite dimensional then $\Phi$ is finitely determined. More precisely, if
$\wh{\rJ}(Q,\Phi) \supseteq \wh{\fm}^r$ for some integer $r\geq 0$ then $\Phi$ is $(r+1)$-determined.
\end{theorem}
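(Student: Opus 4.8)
The plan is to adapt Mather's homotopy argument (the ``Moser trick'') to $\wh{\CC Q}$, using $l$-derivations in place of vector fields and integrating them one jet at a time. First I would reduce to the quantitative statement: since $\wh{\Lm}(Q,\Phi)=\wh{\CC Q}/\wh{\rJ}(Q,\Phi)$ is finite dimensional and complete for the $\bigl(\wh{\fm}/\wh{\rJ}(Q,\Phi)\bigr)$-adic topology (Lemma \ref{Jacobi-closed}), the ideal $\wh{\fm}/\wh{\rJ}(Q,\Phi)$ is nilpotent, hence $\wh{\rJ}(Q,\Phi)\supseteq\wh{\fm}^{r}$ for some $r\geq0$; so it is enough to prove that this inclusion forces $\Phi$ to be $(r+1)$-determined. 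Unwinding the definition, I must produce, for every $\Psi\in\wh{\CC Q}_{\ol{\cy}}$ with $\Phi^{(r+1)}=\Psi^{(r+1)}$ in $J^{r+1}_{\cy}$ --- equivalently $\Psi-\Phi\in\pi(\wh{\fm}^{r+2})$ --- an $H\in\mathcal{G}$ with $H(\Phi)=\Psi$.

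Put $\Phi_{t}:=\Phi+t(\Psi-\Phi)$ for $t\in\CC$. The first key point is that the Jacobi ideal is constant along this segment: from $\Phi_{t}-\Phi=t(\Psi-\Phi)\in\pi(\wh{\fm}^{r+2})$ we get $\Phi_{t}^{(r+1)}=\Phi^{(r+1)}$, so Proposition \ref{isomorphism-criterior-0} gives $\wh{\rJ}(Q,\Phi_{t})=\wh{\rJ}(Q,\Phi)\supseteq\wh{\fm}^{r}$ for all $t$. Hence $\Psi-\Phi\in\pi(\wh{\fm}^{r+2})=\pi(\wh{\fm}^{2}\cdot\wh{\fm}^{r})\subseteq\pi\bigl(\wh{\fm}^{2}\,\wh{\rJ}(Q,\Phi_{t})\bigr)$ for every $t$. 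This membership in the ``second higher'' Jacobi ideal --- rather than merely in $\wh{\rJ}$ --- is precisely what will let me solve the infinitesimal equation by a derivation that preserves $\wh{\fm}$: for a potential $\Theta_{0}$ and any class lying in $\pi\bigl(\wh{\fm}^{s}\,\wh{\rJ}(Q,\Theta_{0})\bigr)$, a representative can be written $\sum_{i}x_{i}\,\Theta_{0,*}(D_{a_{i}})\,y_{i}$ with $x_{i}\in\wh{\fm}^{s}$ ($s\geq1$), and then the cyclic derivation $\sum_{i}x_{i}D_{a_{i}}y_{i}$ lies in $\wh{\cder}_l^{+}(\wh{\CC Q})$; lifting it along $\wh{\mu}\circ\wh{\tau}\circ-$ into $\wh{\dder}_l^{+}(\wh{\CC Q})$ and pushing down along $\wh{\mu}\circ-$ (cf.\ Diagram \eqref{derivation+}) produces $\zeta\in\der_l(\wh{\CC Q})$ with $\zeta(Q_{1})\subseteq\wh{\fm}^{s}$ and $\Theta_{0,\#}(\zeta)$ the given class.

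Next I would assemble a time-dependent derivation $\zeta_{t}$ with $\Phi_{t,\#}(\zeta_{t})=\pi(\Phi-\Psi)$, as a series $\zeta_{t}=\sum_{j\geq0}Z_{j}\,t^{j}$. Since $\Phi_{t,\#}(\zeta)=\Phi_{\#}(\zeta)+t\,\pi\bigl(\zeta(\psi-\phi)\bigr)$ for a representative $\psi-\phi$ of $\Psi-\Phi$, the requirement becomes $\Phi_{\#}(Z_{0})=\pi(\Phi-\Psi)$ and $\Phi_{\#}(Z_{j})=-\pi\bigl(Z_{j-1}(\psi-\phi)\bigr)$ for $j\geq1$. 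By the previous paragraph (with $\Theta_{0}=\Phi$) I may take $Z_{0}(Q_{1})\subseteq\wh{\fm}^{2}$; inductively, if $Z_{j}(Q_{1})\subseteq\wh{\fm}^{j+2}$ then $Z_{j}(\psi-\phi)\in\wh{\fm}^{(r+2)+(j+1)}=\wh{\fm}^{j+3}\cdot\wh{\fm}^{r}\subseteq\wh{\fm}^{j+3}\,\wh{\rJ}(Q,\Phi)$, so $Z_{j+1}$ can be chosen with $Z_{j+1}(Q_{1})\subseteq\wh{\fm}^{j+3}$. Thus $Z_{j}(Q_{1})\subseteq\wh{\fm}^{j+2}\to0$ $\wh{\fm}$-adically; consequently $\zeta_{t}\in\der_l(\wh{\CC Q})$ is a well-defined derivation with $\zeta_{t}(Q_{1})\subseteq\wh{\fm}^{2}$ for every $t\in\CC$, and modulo any fixed $\wh{\fm}^{N+1}$ only finitely many $Z_{j}$ contribute, so $\zeta_{t}$ induces on the finite dimensional jet algebra $J^{N}=\wh{\CC Q}/\wh{\fm}^{N+1}$, and on its commutator quotient $J^{N}_{\cy}$, an endomorphism $\overline{\zeta}_{t}$ that is polynomial in $t$.

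Finally I would integrate jet by jet. On the finite dimensional $\CC$-space $J^{N}$ the linear ODE $\dot{H}^{N}_{t}=H^{N}_{t}\circ\overline{\zeta}_{t}$, $H^{N}_{0}=\id$, has a unique solution on $[0,1]$; differentiating $H^{N}_{t}(ab)-H^{N}_{t}(a)H^{N}_{t}(b)$ shows this quantity obeys a homogeneous linear ODE with vanishing initial value and is therefore identically zero, so each $H^{N}_{t}$ is an $l$-algebra endomorphism of $J^{N}$, invertible by Liouville's formula, hence $H^{N}_{t}\in\mathcal{G}^{N}$. The solutions are compatible as $N$ grows, so $H_{t}:=\varprojlim_{N}H^{N}_{t}\in\mathcal{G}$. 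Passing to $J^{N}_{\cy}$ and using $\overline{\zeta}_{t}(\Phi^{(N)}_{t})=\overline{\Phi-\Psi}$, one computes $\tfrac{d}{dt}\bigl(H^{N}_{t}(\Phi^{(N)}_{t})\bigr)=H^{N}_{t}\bigl(\overline{\zeta}_{t}(\Phi^{(N)}_{t})\bigr)+H^{N}_{t}\bigl(\overline{\Psi-\Phi}\bigr)=H^{N}_{t}\bigl(\overline{\Phi-\Psi}\bigr)+H^{N}_{t}\bigl(\overline{\Psi-\Phi}\bigr)=0$, so $H^{N}_{t}(\Phi^{(N)}_{t})=\Phi^{(N)}$ for all $N$, i.e.\ $H_{t}(\Phi_{t})=\Phi$ in $\wh{\CC Q}_{\ol{\cy}}$; at $t=1$ this reads $H_{1}(\Psi)=\Phi$, whence $\Phi\sim\Psi$. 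I expect the crux to be exactly this passage from the infinitesimal data to a genuine automorphism: one must keep $\zeta_{t}$ inside $\der_l^{+}$ --- which is why the high order of $\Psi-\Phi$, via the constancy of $\wh{\rJ}$ along the segment and hence $\Psi-\Phi\in\wh{\fm}^{2}\wh{\rJ}$, is needed --- and one must build $H_{t}$ from its finite truncations, since $\mathcal{G}$ carries no exponential map defined on all of $\der_l(\wh{\CC Q})$.
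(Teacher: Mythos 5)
Your proposal is correct, and it follows the same overall strategy as the paper --- Mather's homotopy method along the segment $\Phi_t=\Phi+t(\Psi-\Phi)$ --- but the implementation is genuinely different. The paper handles the step ``choose a time-dependent infinitesimal symmetry'' by enlarging the coefficient ring: it passes to $\wh{KQ}$ for $K$ the ring of entire functions, transfers the inclusion $\Phi_*(\wh{\cder}^+_l(\wh{\CC Q}))\supseteq\wh{\fm}^{r+1}$ to $\Phi_*(\wh{\cder}^+_L(\wh{KQ}))\supseteq\wh{\fn}^{r+1}$ (Lemma \ref{tangent-compare-1}), applies Nakayama over $\wh{KQ}$ to get $\Theta_*(\wh{\cder}^+_L(\wh{KQ}))\supseteq\wh{\fn}^{r+1}\ni\frac{d\Theta}{dt}$ in one stroke, and then integrates via an analytic ODE organized by induction on path length (Lemmas \ref{integrate-derivation} and \ref{thm-localtri}). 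You instead stay over $\CC$, solve $\Phi_{t,\#}(\zeta_t)=\pi(\Phi-\Psi)$ recursively in powers of $t$ using the sharper membership $\Psi-\Phi\in\pi(\wh{\fm}^{2}\,\wh{\rJ}(Q,\Phi))$ to force $Z_j(Q_1)\subseteq\wh{\fm}^{j+2}$ (which makes the series converge $\wh{\fm}$-adically), and then integrate on the finite-dimensional jet algebras $J^N$, where everything is a polynomial linear ODE. The paper's route buys a one-shot, ideal-theoretic solution of the infinitesimal equation at the cost of the auxiliary ring $K$ and the transfer lemma; your route eliminates $K$ entirely at the cost of the order-by-order bookkeeping. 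Two points deserve an explicit sentence to be airtight: (i) the claim that any class in $\pi(\wh{\fm}^{s}\,\wh{\rJ}(Q,\Phi))$ equals $\Phi_\#(\zeta)$ for some $\zeta$ with $\zeta(Q_1)\subseteq\wh{\fm}^{s}$ should be justified by writing a representative as a \emph{finite} sum $\sum_i x_i\,\Phi_*(D_{a_i})\,y_i$ with $x_i\in\wh{\fm}^{s}$ (possible since there are only finitely many paths of length $s$ and $\wh{\fm}^{s}\wh{\rJ}\supseteq\wh{\fm}^{s+r}$) and then invoking the commutativity of Diagram (\ref{derivation+}); and (ii) the multiplicativity of $H^N_t$ is cleanest if you regard the defect $E_t(x,y)=H^N_t(xy)-H^N_t(x)H^N_t(y)$ as a point in the finite-dimensional space of bilinear maps and check $\dot{E}_t(x,y)=E_t(\overline{\zeta}_t(x),y)+E_t(x,\overline{\zeta}_t(y))$, a homogeneous linear ODE with $E_0=0$. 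Finally, your appeal to Proposition \ref{isomorphism-criterior-0} (constancy of $\wh{\rJ}(Q,\Phi_t)$ along the segment) is never actually used: the recursion only invokes $\wh{\rJ}(Q,\Phi)\supseteq\wh{\fm}^{r}$ at $t=0$, so that step can be dropped.
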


The proof of this theorem will be addressed after several auxiliary results.

Let us fix some notations once for all. We denote  $K$  for the algebra of entire functions on the complex plane $\CC$. The base ring $k$ that we need below are $\CC$ and $K$. Let $l=\CC Q_0$ and $\wh{\fm}$ the ideal of $\wh{\CC Q}$ generated by arrows.  Let $L= K Q_0$ and $\wh{\fn}$ the ideal of $\wh{KQ}$ generated by arrows.

We identify $\wh{\CC Q}$ (resp. $\wh{\CC Q}_{\ol{\cy}}$) as a subspace of $\wh{KQ}$ (resp. $\wh{KQ}_{\ol{\cy}}$) in the natural way.  Since $l$-algebra automorphisms of $\wh{\CC Q}$ and $L$-algebra automorphisms of  $\wh{KQ}$ are both uniquely determined by their values on the arrows, one may naturally identify the group $\Aut_l(\wh{\CC Q}) = \Aut_l(\wh{\CC Q}, \wh{\fm})$  as a subgroup of  $\Aut_L(\wh{KQ},\wh{\fn})$. For every $t\in \CC$, let
\[
(-)_t: \wh{KQ} \to \wh{\CC Q}, ~~~~~f\mapsto f_t
\]
be the map given by evaluating coefficients at $t$. Furthermore,  there is a map
 \[
(-)_t: \Aut_L(\wh{KQ},\wh{\fn}) \to \Aut_l(\wh{\CC Q},\wh{\fm}), ~~~~~ H\mapsto H_t: a \mapsto H(a)_t, \quad a\in Q_1.
\]
One may consider $H\in \Aut_L(\wh{KQ},\wh{\fn})$ as an analytic curve $t\mapsto H_t$ of $l$-automorphisms of $\wh{\CC Q}$.

Given a formal series  $f=\sum_{w} a_w(t) w\in \wh{KQ}$, the derivative  $\frac{d\, f}{d\, t}$ is defined to be  the formal series $\sum_{w} a^\p_w(t) w$. It is easy to check that take derivation preserves cyclic equivalence relation on $\wh{KQ}$. Consequently, one may naturally define $\frac{d\, \Phi}{d\, t}$ for any superpotential $\Phi\in \wh{KQ}_{\ol{\cy}}$.



\begin{lemma}\label{integrate-derivation}
Fix an element  $f_a\in e_{s(a)}\cdot \wh{\fn}\cdot e_{t(a)}$ for every arrow $a\in Q_1$. There exists an automorphism  $H\in {\rm Aut}_{L} (\wh{KQ}, \mathfrak{n})$ such that
\[
 H_0=\id \quad \text{and} \quad\frac{~d\, H(a)}{d~ t} = -H(f_a)\text{ for } a\in Q_1.
\]
\end{lemma}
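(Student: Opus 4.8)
The plan is to construct $H$ as the time-one flow (or rather the flow of the full interval $[0,t]$) of the time-dependent vector field determined by the $f_a$, carried out jet-by-jet using the $\wh{\fn}$-adic filtration. Concretely, I would build $H$ as a limit of its truncations $H^{(r)}$ on the jet algebras $J^r_K := \wh{KQ}/\wh{\fn}^{r+1}$, solving at each stage a genuine system of ODEs with coefficients in $K$, and use the fact that the solution of a linear/polynomial ODE system with entire coefficients and entire initial data is again entire in order to stay inside $\wh{KQ}$ (i.e. to keep the coefficients in $K$ rather than merely in $\CC$).

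First I would set up the recursion. Since an $L$-algebra endomorphism of $\wh{KQ}$ is determined by the images $H(a)\in e_{s(a)}\cdot\wh{\fn}\cdot e_{t(a)}$ of the arrows, and since $\wh{\fn}^{r+1}$ is preserved by any such endomorphism, it suffices to determine, for each arrow $a$ and each degree $r\geq 1$, the degree-$r$ component $H(a)[r]$ of $H(a)$, subject to $H(a)[1]$ being the identity-type term at $t=0$ and $H_0=\id$. Differentiating the desired relation $\frac{d}{dt}H(a) = -H(f_a)$ and extracting the degree-$r$ part, one gets
\[
\frac{d}{dt}\big(H(a)[r]\big) \;=\; -\,\big(H(f_a)\big)[r],
\]
and the right-hand side, expanded via the chain rule (Lemma \ref{chain-rule}) and the fact that $f_a\in\wh{\fn}$, depends only on the components $H(b)[s]$ with $s\leq r$ — in fact the degree-$r$ part of $H(f_a)$ is a polynomial expression in the $H(b)[s]$ for $s<r$ plus a term linear in the $H(b)[r]$ coming from the linear (degree-one) part of $f_a$ in each arrow. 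Wait, more carefully: since $f_a$ has order $\geq 1$, the degree-$r$ part of $H(f_a)$ involves $H(b)[s]$ only for $s\leq r-1$ when the order of $f_a$ is $\geq 2$; the order-one terms of $f_a$ contribute a term linear in $H(b)[r]$. Either way the system is triangular in $r$: assuming $H(b)[s]$ solved for all $b$ and all $s<r$, the equations for the finitely many unknowns $H(a)[r]$ (as functions of $t$, valued in the finite-dimensional $\CC$-space of degree-$r$ paths) form a \emph{linear} first-order ODE system in $t$ with coefficients that are entire functions of $t$ (being polynomials in the already-constructed entire functions $H(b)[s]$, $s<r$) and with initial condition $H(a)[r]|_{t=0}=0$ for $r\geq 2$ (and $=a$ for $r=1$, which solves trivially to the constant $a$).

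Then I would invoke existence and uniqueness for linear ODE systems with entire coefficients: such a system has a unique solution defined on all of $\CC$, and the solution is entire in $t$ (e.g. by the standard Picard iteration, which converges locally uniformly on all of $\CC$ because the coefficients are entire, hence bounded on compacta). This gives $H(a)[r]\in K\otimes(\text{degree-}r\text{ paths})$ for every $r$, hence a well-defined $H(a)=\sum_{r\geq1}H(a)[r]\in e_{s(a)}\cdot\wh{\fn}\cdot e_{t(a)}$, hence an $L$-algebra endomorphism $H$ of $\wh{KQ}$. By construction $H_0=\id$ on every jet, so $H_0=\id$; and since $H$ induces the identity on $\wh{\fn}/\wh{\fn}^2$, the inverse function theorem (Lemma \ref{inverse}) shows $H\in\Aut_L(\wh{KQ},\wh{\fn})$. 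Finally, the relation $\frac{d}{dt}H(a)=-H(f_a)$ holds modulo $\wh{\fn}^{r+1}$ for every $r$ by construction, hence holds in $\wh{KQ}$.

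The main obstacle, and the point deserving the most care, is the entireness (global-in-$t$) claim: one must make sure that passing from degree $r-1$ to degree $r$ does not shrink the domain of definition in $t$, so that the coefficients genuinely lie in $K$ (entire functions) and not merely in some ring of germs of holomorphic functions at $0$. This is exactly why the triangular structure matters — at each stage the ODE is \emph{linear} in the degree-$r$ unknowns (the nonlinearity in the $f_a$ only feeds in through lower-degree, already-globally-defined data), and linear ODE systems with entire coefficients have globally entire solutions with no blow-up. I would state this as a small lemma and cite the standard theory. A secondary, purely bookkeeping point is to check that the chain-rule expansion respects the idempotent decomposition so that $H(a)[r]$ indeed lands in $e_{s(a)}\cdot(\cdots)\cdot e_{t(a)}$; this is immediate from $f_a\in e_{s(a)}\wh{\fn}e_{t(a)}$ and the fact that all maps in sight are $L$-linear.
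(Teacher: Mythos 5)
Your construction is essentially the paper's: both organize the recursion along the $\wh{\fn}$-adic filtration, observe that at each stage the unknown coefficients satisfy a linear first-order ODE system whose inhomogeneous term is determined by already-constructed lower-order data and whose coefficients are entire, invoke global existence of entire solutions for such systems, and finish with Lemma \ref{inverse}. (The paper indexes the recursion by individual paths rather than by total degree, but the triangular structure is identical.) One slip is worth flagging: you assert that the $r=1$ equation ``solves trivially to the constant $a$'' and hence that $H$ induces the \emph{identity} on $\wh{\fn}/\wh{\fn}^2$. That holds only if every $f_a$ has order $\geq 2$, whereas the lemma assumes only $f_a\in\wh{\fn}$, and in the application (Lemma \ref{thm-localtri}) the linear parts of the $\xi(a)$ need not vanish. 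In general the degree-one coefficients satisfy the homogeneous linear system governed by the matrix of linear coefficients of the $f_a$, so the induced map on $\wh{\fn}/\wh{\fn}^2$ is the fundamental matrix solution of that system --- invertible for every $t$ (Liouville's formula, or because its inverse solves the adjoint system), but not the identity. Since Lemma \ref{inverse} only requires invertibility on $\wh{\fn}/\wh{\fn}^2$, your conclusion survives once this is corrected; this invertibility computation is exactly the step the paper carries out explicitly via the matrices $M_{ij}(t)$ and their exponentials.
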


\begin{proof}
Write $f_a = \sum_{w} \lambda_{a,w}(t) ~w$,
where $w$ runs over all paths and $\lambda_{a,w}(t) \in K$. Note that  $\lambda_{a,e_i} = 0$ for all arrows $a$ and nodes $i$, and $\lambda_{a,w} =0$ for all arrows $a$ and  paths $w$ that don't have the same source and end.
For each pair of nodes $i$ and $j$, let
\[
M_{ij}(t):=[\lambda_{a,b}(t)]_{a,b\in Q_1^{(ij)}},
\]
which  is an $(Q_1^{(ij)} \times Q_1^{(ij)})$-matrix with entries in $K$.
If desired $H$ exists, write
\[
 H(a) = \sum_w \gamma_{a,w}(t)~ w, \quad a\in Q_1.
\]
Then we would have
\begin{enumerate}
\item[(1)] $\gamma_{a,e_i} (t)= 0$ for all arrows $a$ and nodes $i$, and $\gamma_{a,w} (t)=0$ for all arrows $a$ and  paths $w$ that don't have the same source and end.
\end{enumerate}
Moreover, we would also have:
\[
\sum_{|w|\geq 1}\gamma_{a,w}'(t)~w = - \sum_{|w|\geq1} \lambda_{a,w}(t) ~H(w), \quad a\in Q_1.
\]
Compare the coefficients we then would have for each pair of nodes $i$ and $j$  that 
\begin{enumerate}
\item[(2)]  $(\gamma_{a,b}(0))_{a\in Q_1^{(ij)}}=(\delta_{a,b})_{a\in Q_1^{(ij)}}$ for all arrows from $i$ to $j$, and $(\gamma_{a,w}(0))_{a\in Q_1^{(ij)}}=0$ for all paths $w$  from $i$ to $j$ of length
$\geq 2$.
\item[(3)] For paths $w$  from $i$ to $j$ of length $\geq 1$,
\[
(\gamma_{a,w}'(t))_{a\in Q_1^{(ij)}} = -M_{ij}(t)\cdot (\gamma_{a,w}(t))_{a\in Q_1^{(ij)}}+(F_{a,w}(t))_{a\in Q_1^{(ij)}},
\] where $F_{a,w}(t) \in e_i \cdot \wh{KQ}\cdot e_j$ is a finite linear sum of finite products of elements in
\[
\{ \lambda_{a,u}(t)~ |~ 2 \leq  |u| \leq |w|, ~ s(u)=i, ~ t(u)=j \} \bigcup \{\gamma_{b,v}(t)~|~ b\in Q_1, ~ 1 \leq |v| <  |w| \}.
\]
Here the operation of $(Q_1^{(ij)} \times Q_1^{(ij)})$-matrices on $Q_1^{(ij)}$-tuples is defined in the natural way.
\end{enumerate}
By the stand theory of analytic ODE (see \cite{CLbook}), we can construct all these coefficients $\gamma_{a,w}(t)\in K$ by induction on the length of $w$
from the above three conditions (1), (2) and (3). Then of course the induced $L$-algebra endomorphism $H$ of $\wh{KQ}$ satisfies the ODE, and also
\[
[\gamma_{a,b}(t)]_{a,b\in Q_1^{(ij)}} = e^{N_{ij}(t)}, \quad i,j\in Q_0,
\]
where $N_{ij}(t)$ is the antiderivative of $M_{ij}(t)$ with $N_{ij}(0)$ the identity $(Q_1^{(ij)} \times Q_1^{(ij)})$-matrix.
In particular, $[\gamma_{a,b}(t)]_{a,b\in Q_1^{(ij)}}$
 is  invertible with inverse $e^{-N_{ij}(t)}$ for each pair of nodes $i$ and $j$.  By Lemma \ref{inverse}, $H$ is an $L$-algebra automorphism of $\wh{KQ}$ preserves $\wh{\fn}$.
\end{proof}

\begin{lemma}[Local triviality]\label{thm-localtri}
Let $\Theta\in \wh{KQ}_{\ol{\cy}}$ be a potential. Suppose
$\frac{d \Theta}{d t} \in \Theta_\#(\der_L^+(\wh{KQ}))$. Then
there exists  an automorphism $H\in \Aut_L(\wh{KQ},\wh{\fn})$ such that
\[
 H_0=\id \quad \text{and} \quad H(\Theta)=\Theta_0  \text{ in } \wh{KQ}_{\ol{\cy}}.
\]
\end{lemma}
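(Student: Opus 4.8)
The statement is a ``local triviality'' result: a one-parameter family $\Theta_t$ of potentials whose $t$-derivative lies in the image of the infinitesimal right action is isomorphic, via an automorphism curve starting at the identity, to the constant family $\Theta_0$. The natural approach is the classical ``integrate the vector field'' argument (Thom--Levine, as used by Mather): first produce, for each $t$, a derivation realizing $\frac{d\Theta}{dt}$, then integrate the resulting time-dependent derivation to an automorphism curve $H_t$ using Lemma \ref{integrate-derivation}, and finally check that $H_t(\Theta)$ is independent of $t$.

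\medskip

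First I would unwind the hypothesis $\frac{d\Theta}{dt}\in \Theta_\#(\der_L^+(\wh{KQ}))$. This gives, for the \emph{generic} $t$, a derivation $\xi\in\der_L^+(\wh{KQ})$ with $\Theta_\#(\xi)=\frac{d\Theta}{dt}$; since $\der_L^+(\wh{KQ})=\wh{\mu}(\wh{\dder}_L^+(\wh{KQ}))$, I may lift $\xi$ to a double derivation, and by the commutative diagram (\ref{derivation+}) together with the fact that $\im(\Phi_\#)=\im(\pi\circ\Phi_*)$ when $\wh{\mu}\circ-$ is surjective, I can rewrite the condition as: there exist $f_a\in e_{s(a)}\cdot\wh{\fn}\cdot e_{t(a)}$, $a\in Q_1$, with
\[
\frac{d\Theta}{dt} = \pi\Big(\sum_{a\in Q_1} f_a\cdot \Theta_*(D_a)\Big) \quad\text{in }\wh{KQ}_{\ol{\cy}}.
\]
One subtlety here is that the $f_a$ must be allowed to depend on $t$, i.e.\ must lie in $\wh{KQ}$ and not merely in $\wh{\CC Q}$; since $K$ is already the coefficient ring this should be fine, but I would be careful that the lift can be chosen with coefficients entire in $t$ (finitely many arrows, finite-dimensional spaces $Q_1^{(ij)}$, so the lift is through a surjection of finitely generated free $K$-modules degree by degree, hence can be chosen $K$-linearly).

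\medskip

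Next, feed these $f_a$ into Lemma \ref{integrate-derivation} to obtain $H\in\Aut_L(\wh{KQ},\wh{\fn})$ with $H_0=\id$ and $\frac{d\,H(a)}{dt}=-H(f_a)$ for all $a\in Q_1$. The core computation is then to differentiate $H(\Theta)$ with respect to $t$ and show it vanishes in $\wh{KQ}_{\ol{\cy}}$. Writing a representative $\theta\in\wh{KQ}$ for $\Theta$, one has $\frac{d}{dt}H(\theta)=H\big(\frac{d\theta}{dt}\big)+(\partial_t H)(\theta)$, where $\partial_t H$ is the derivation sending $a\mapsto \frac{d\,H(a)}{dt}=-H(f_a)$. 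For the second term one uses the chain rule (Lemma \ref{chain-rule}): any derivation $\delta$ of $\wh{KQ}$ applied to $\theta$ can be expanded via the cyclic derivatives, giving $(\partial_t H)(\theta) \equiv -\sum_a H(f_a)\cdot H(\Theta_*(D_a)) = -H\big(\sum_a f_a\cdot\Theta_*(D_a)\big) \pmod{[\wh{KQ},\wh{KQ}]^{cl}}$ — here I use that $\Theta_\#(\partial_t H)$, i.e.\ the class of $(\partial_t H)(\theta)$ in $\wh{KQ}_{\ol{\cy}}$, equals $\pi$ of that sum, by the same diagram (\ref{derivation+}) (modulo checking that $\partial_t H$ is indeed the derivation with the stated values on arrows, and that $\Theta_\#$ of it picks out exactly the cyclic-derivative expansion). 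Since $\frac{d\theta}{dt}$ represents $\frac{d\Theta}{dt}=\pi(\sum_a f_a\cdot\Theta_*(D_a))$, the first and second terms cancel in $\wh{KQ}_{\ol{\cy}}$, so $\frac{d}{dt}\,[H(\Theta)]=0$, and $H(\Theta)=H_0(\Theta_0)=\Theta_0$.

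\medskip

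\textbf{Main obstacle.} The delicate point is the interaction between ``$=$ in $\wh{KQ}_{\ol{\cy}}$'' (i.e.\ modulo the closed commutator subspace) and the $t$-differentiation: one must know that $\frac{d}{dt}$ commutes with $\pi$ (stated earlier: differentiation preserves cyclic equivalence), that a class in $\wh{KQ}_{\ol{\cy}}$ with zero $t$-derivative and the right initial value is constant (an elementary fact about power/formal series in $t$, applied coefficient-of-each-path-wise), and — most importantly — that the computation of $\frac{d}{dt}H(\Theta)$ can legitimately be carried out at the level of $\wh{KQ}_{\ol{\cy}}$ by combining the chain rule with diagram (\ref{derivation+}). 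I expect the chain-rule bookkeeping, i.e.\ identifying $\Theta_\#$ of the derivation $\partial_t H$ with $-\pi(\sum_a H(f_a)H(\Theta_*(D_a)))$ and hence with $-H$ applied to $\frac{d\Theta}{dt}$, to be where all the real content sits; everything else is formal manipulation justified by the continuity of derivations and the completeness results (Lemma \ref{Jacobi-closed}) already established.
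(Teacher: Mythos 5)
Your proposal is correct and follows essentially the same route as the paper: express $\frac{d\Theta}{dt}$ as $\pi\big(\sum_a \xi(a)\cdot\Theta_*(D_a)\big)$ via the lift to a double derivation and diagram (\ref{derivation+}), integrate with Lemma \ref{integrate-derivation} to get $H$ with $\frac{d\,H(a)}{dt}=-H(\xi(a))$, and then show $\frac{d}{dt}H(\Theta)=H\big(\Theta_\#(\xi)-\pi(\sum_a\xi(a)\Theta_*(D_a))\big)=0$ in $\wh{KQ}_{\ol{\cy}}$. Your worry about choosing the lift with coefficients entire in $t$ is moot, since the hypothesis already hands you $\xi\in\der_L^+(\wh{KQ})$ over the ring $L=KQ_0$.
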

\begin{proof}
Choose $\xi\in \der_L^+(\wh{KQ})$ such that $\frac{d \Theta}{d t} = \Theta_\#(\xi)$. Then there exists a double $L$-derivation
\[
\delta=\sum_{a\in Q_1}^n \sum_{u,v} \lambda_{u,v}^{(a)}(t)~ u* \frac{\partial~}{\partial a} * v \in \wh{\dder}_L^+(\wh{KQ}),
\]
where $u$ and $v$ runs over paths with $t(u) = t(a)$ and $s(v) =s(a)$ respectively,   such that $\xi=\wh{\mu}\circ \delta$.  In $\wh{KQ}_{\ol{\cy}}$, we have
\begin{eqnarray*}
\Theta_\#(\xi)
&=& \pi\bigg ( \Theta_*\big (\wh{\mu}\circ \wh{\tau} \circ \delta \big ) \bigg )\\
&=& \pi \big ( \sum_{a\in Q_1} \sum_{u,v} \lambda_{u,v}^{(a)}(t)~ u\cdot  \Theta_*(D_a) \cdot v \big )\\
&=& \pi \big ( \sum_{a\in Q_1} \sum_{u,v} \lambda_{u,v}^{(a)}(t)~ vu\cdot  \Theta_*(D_a) \big )\\
&=& \pi \big ( \sum_{a\in Q_1} \xi(a)\cdot  \Theta_*(D_a) \big )
\end{eqnarray*}
Note that $\xi(a)\in e_{s(a)} \cdot \wh{\fn}\cdot e_{t(a)}$. Let  $H\in {\rm Aut}_{L} (\wh{KQ}, \mathfrak{n})$ be chosen as in  Lemma \ref{integrate-derivation}. So
\[
H_0=\id \quad \text{and} \quad\frac{~d\, H(a)}{d~ t} = -H(\xi(a))\text{ for } a\in Q_1.
\]
Then, in $\wh{KQ}_{\ol{\cy}}$  we have
\begin{eqnarray*}
\frac{~~d\, H(\Theta)}{d~ t} &=& H(\frac{~d\, \Theta}{d~ t}) + \pi\bigg(\sum_{a\in Q_1}\frac{~d\, H(a)}{d~ t}\cdot H \big (\Theta_*(D_a)\big) \bigg)  \\
&=&H \bigg ( \Theta_\#(\xi)- \pi\big(\sum_{a\in Q_1} \xi(a) \cdot \Theta_*(D_a) \big )\bigg)\\
&=&0.
\end{eqnarray*}
Thus all coefficients of the canonical representative of $H(\Theta) $ are constants and hence $H(\Theta)= H(\Theta)_0= H_0(\Theta_0) = \Theta_0$.
\end{proof}

\begin{lemma}\label{tangent-compare-1}
For any potential $\Phi\in \wh{\CC Q}_{\ol{\cy}} \subseteq \wh{KQ}_{\ol{\cy}} $, we have
\[
\Phi_* \big (\wh{\cder}_l^+ (\wh{\CC Q}) \big ) \supseteq \wh{\fm}^r \Longleftrightarrow \Phi_*  \big (\wh{\cder}_{L}^+ (\wh{KQ}) \big ) \supseteq \wh{\fn}^r, ~~~ r>0.
\]
\end{lemma}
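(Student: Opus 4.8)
The plan is to reduce the assertion, via the Nakayama lemma, to a containment of spans of finitely many vectors with complex coordinates, a property that survives the extension $\CC\hookrightarrow K$ because ranks of complex matrices do.

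\emph{Step 1: $\Phi_*(\wh{\cder}^+)$ is an ideal that is finitely generated over $\CC$ up to closure.} Since $\wh{\cder}_l^+(\wh{\CC Q})$ is an $\wh{\CC Q}$-sub-bimodule of $\wh{\cder}_l(\wh{\CC Q})$ (if a cyclic derivation preserves $\wh{\fm}$ then so does $uDv$), the restriction of the bimodule map $\Phi_*$ to it has image a two-sided ideal of $\wh{\CC Q}$. Moreover $\wh{\cder}_l^+(\wh{\CC Q})$ is topologically generated as a bimodule by the finite set
\[
\wh{G}:=\{\,a\cdot D_b,\ D_b\cdot a\ :\ a,b\in Q_1\,\}\ \cup\ \{\,D_a\ :\ a\in Q_1\text{ a non-loop}\,\}:
\]
for $D=\sum_{a}\sum_{u,v}A^{(a)}_{u,v}\,uD_av\in\wh{\cder}_l^+(\wh{\CC Q})$, the requirement $D(a')\in\wh{\fm}$ forces the coefficient of the trivial paths, $A^{(a')}_{e,e}$, to vanish for every loop $a'$, and all surviving terms lie in the sub-bimodule generated by $\wh{G}$. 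Hence, writing $G:=\Phi_*(\wh{G})\subseteq\wh{\CC Q}$ — a finite set all of whose elements have coefficients in $\CC$ — and $(G)_A$ for the \emph{algebraic} two-sided ideal generated by $G$ in a ring $A$, continuity of $\Phi_*$ gives
\[
(G)_{\wh{\CC Q}}\ \subseteq\ \Phi_*\bigl(\wh{\cder}_l^+(\wh{\CC Q})\bigr)\ \subseteq\ \overline{(G)_{\wh{\CC Q}}}.
\]
The identical computation over $K$ yields $(G)_{\wh{KQ}}\subseteq\Phi_*(\wh{\cder}_L^+(\wh{KQ}))\subseteq\overline{(G)_{\wh{KQ}}}$ with the \emph{same} finite set $G$.

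\emph{Step 2: collapse the closure by Nakayama.} Lemma~\ref{Nakayama} applies over both $\CC$ and $K$, so for a two-sided ideal the conditions ``$\supseteq\wh{\fm}^r$'' and ``$+\,\wh{\fm}^{r+1}\supseteq\wh{\fm}^r$'' coincide; combining this with Step~1 gives
\[
\Phi_*\bigl(\wh{\cder}_l^+(\wh{\CC Q})\bigr)\supseteq\wh{\fm}^r \iff (G)_{\wh{\CC Q}}+\wh{\fm}^{r+1}\supseteq\wh{\fm}^r,
\]
and likewise over $K$ with $\wh{\fn}$. The right-hand side lives entirely in the finite-dimensional jet algebra $J^r:=\wh{\CC Q}/\wh{\fm}^{r+1}$: the image of $(G)_{\wh{\CC Q}}$ there is the $\CC$-span of the finite set $V:=\{\,\overline{ugv}: g\in G,\ |u|+|v|\le r\,\}$, and the condition is that this span contain the span $B$ of the residues of the paths of length exactly $r$. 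Since $\wh{KQ}/\wh{\fn}^{r+1}$ is canonically $J^r\otimes_\CC K$ and the vectors of $V$ and $B$ have $\CC$-coordinates in the path basis, the $K$-statement is exactly the analogous containment of $K$-spans inside $J^r\otimes_\CC K$.

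\emph{Step 3: base change, and the main obstacle.} It remains to check the elementary fact that for finitely many vectors in a finite-dimensional $\CC$-vector space $W$, a containment of $\CC$-spans is equivalent to the corresponding containment of $K$-spans in $W\otimes_\CC K$: the containment holds iff the composite of $\mathrm{span}_\CC(v_i)\hookrightarrow W$ with $W\twoheadrightarrow W/\mathrm{span}_\CC(w_j)$ is surjective, which is unaffected by the exact (faithfully flat) functor $-\otimes_\CC K$; equivalently, it amounts to an equality of ranks of two matrices over $\CC$, and the rank of a complex matrix is unchanged over $K$, over $\mathrm{Frac}(K)$, or over any field extension of $\CC$. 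Applying this with $\{v_i\}=V$ and $\{w_j\}=B$ finishes the proof. The genuinely delicate point is Step~1: once $\Phi_*(\wh{\cder}^+)$ is squeezed between an honest finitely generated ideal \emph{defined over $\CC$} and its closure, the non-noetherianness of $K$ (which would obstruct a naive Mittag-Leffler argument) becomes irrelevant, and everything afterwards is formal.
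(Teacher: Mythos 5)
Your proof is correct, but it takes a genuinely different route from the paper's. The paper's argument for the nontrivial (backward) implication is a one-step specialization: given $f\in\wh{\fm}^r$ and a witness $\delta\in\wh{\dder}_L^+(\wh{KQ})$ with $\Phi_*(\wh{\mu}\circ\wh{\tau}\circ\delta)=f$, it uses the direct-sum decomposition $\wh{KQ}\wh{\ot}_K\wh{KQ}=\wh{\CC Q}\wh{\ot}_\CC\wh{\CC Q}\oplus t\cdot\big(\wh{KQ}\wh{\ot}_K\wh{KQ}\big)$ (i.e.\ evaluation of coefficients at $t=0$) to split $\delta=\delta_1+t\delta_2$ with $\delta_1$ defined over $\CC$, and then reads off $f=\Phi_*(\wh{\mu}\circ\wh{\tau}\circ\delta_1)$ because $f$ and $\Phi$ both have constant coefficients. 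You instead squeeze $\Phi_*(\wh{\cder}^+)$ between the algebraic ideal $(G)$ generated by the finite set $G=\{a\Phi_*(D_b),\ \Phi_*(D_b)a,\ \Phi_*(D_c)\ (c\ \text{a non-loop})\}$ and its closure, collapse the closure by the Nakayama lemma (which the paper's Lemma 2.12 indeed supplies over $K$ as well, and which the paper itself invokes over $\wh{KQ}$ in the proof of finite determinacy), and then descend a containment of spans in the finite-dimensional jet space $J^r$ along $\CC\hookrightarrow K$. Your identification of the constraint cutting out $\wh{\cder}^+$ (vanishing of the representation-independent coefficient $A^{(a')}_{e,e}$ for loops $a'$) is the right one, and the sandwich $(G)\subseteq\Phi_*(\wh{\cder}^+)\subseteq\overline{(G)}$ holds over both base rings with the \emph{same} $G$ since $\Phi$ has constant coefficients. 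What each approach buys: the paper's is shorter and hinges only on the existence of the $\CC$-algebra retraction $K\to\CC$ (evaluation at $0$) — in fact your Step 3 could be replaced by the same one-line evaluation, since your vectors $V$ and $B$ have complex coordinates; your version, by reducing everything to a rank statement over a field, is more robust (it would survive replacing $K$ by any faithfully flat $\CC$-algebra, not just one retracting onto $\CC$) and makes explicit the finite-dimensionality that the Nakayama lemma is secretly exploiting elsewhere in the paper. Two cosmetic points only: the generators $aD_b$ and $D_ba$ should be restricted to composable pairs ($t(a)=t(b)$, resp.\ $s(a)=s(b)$), and in $V$ the bound $|u|+|v|\le r$ could be sharpened to $\le r-1$ since every $g\in G$ lies in $\wh{\fm}$; neither affects the argument.
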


\begin{proof}
We may identify $\wh{\CC Q}\wh{\ot}_\CC \wh{\CC Q}$ with a subspace of  $\wh{KQ}\wh{\ot}_K\wh{KQ}$ in the obvious way. Since double $l$-derivations of $\wh{\CC Q}$ and double $L$-derivations of  $\wh{KQ}$ are uniquely determined by their values at arrows, we may naturally identify $\wh{\dder}_l(\wh{\CC Q})$ with a subspace of $\wh{\dder}_L(\wh{KQ})$. By the definition of cyclic derivations, the forward implication is clear.

To see the backward implication, fix an arbitrary element $f\in \wh{\fm}^r\subseteq \wh{\fn}^r$.
Choose a double $L$-derivation $\delta\in \wh{\dder}_L^+(\wh{KQ})$ with $\Phi_*(\wh{\mu}_{\wh{KQ}} \circ \wh{\tau}_{\wh{KQ}}\circ\delta) = f$.  It is easy to see that every element $g\in \wh{KQ}\wh{\ot}_K\wh{KQ}$  can be uniquely decomposed into the following form:
\[
g=g_1+ t\cdot g_2,  \quad g_1\in \wh{\CC  Q}\wh{\ot}_\CC \wh{\CC Q}, ~~g_2\in  \wh{KQ}\wh{\ot}_K\wh{KQ}.
\]
Using the decompositions
 \[
\delta(a) = \delta(a)_1+ t\cdot \delta(a)_2, \quad a\in Q_1
\]
as above, we get a decomposition
\[
\delta= \delta_1 +t\delta_2, ~~~\delta_1\in \wh{\dder}_l^+ (\wh{\CC Q}) \subseteq \wh{\dder}_L^+(\wh{KQ}), ~~\delta_2\in \wh{\dder}_L^+(\wh{KQ}).
\]
In fact, $\delta_1$ (resp. $\delta_2$) is given by $\delta_1(a) = \delta(a)_1$ (resp. $\delta_2(a) = \delta(a)_2$).
We have
\[
f = \Phi_*(\wh{\mu}_{\wh{KQ}}\circ \wh{\tau}_{\wh{KQ}}\circ \delta)= \Phi_*(\wh{\mu}_{\wh{\CC Q}}\circ \wh{\tau}_{\wh{\CC Q}}\circ \delta_1) +t \cdot \Phi_*(\wh{\mu}_{\wh{KQ}}\circ \wh{\tau}_{\wh{KQ}}\circ  \delta_2).
\]
It follows that $f= \Phi_*(\wh{\mu}_{\wh{\CC Q}}\circ \wh{\tau}_{\wh{\CC Q}}\circ \delta_1) $ and hence $f \in \Phi_*\big(\wh{\cder}_l^+( \wh{\CC Q}) \big)$.
\end{proof}

\begin{proof}[Proof of Theorem  \ref{finite-determinacy}]
It suffices to show the second statement. Suppose  $\wh{\rJ}(Q,\Phi) \supseteq \wh{\fm}^r$. We proceed to show $\Phi$ is $(r+1)$-determined.  Suppose $\Psi\in \wh{\CC Q}_{\ol{\cy}}$ such that $\Psi^{(r+1)}=\Phi^{(r+1)}$.  Let
\[
\Theta:= \Phi+ t(\Psi-\Phi) \in \wh{KQ}_{\ol{\cy}}.
\]
Clearly, we have
\[
\Phi_* \big (\wh{\cder}_l^+ (\wh{\CC Q}) \big ) \supseteq \wh{\fm}^{r+1}.
\]
Then Lemma \ref{tangent-compare-1} tells us
\[
\Phi_* \big (\wh{\cder}_L^+ (\wh{KQ}) \big )   \supseteq \wh{\fn}^{r+1}.
\]
Since $\Theta$ and $\Phi$ has the same $(r+1)$-jet in $\wh{KQ}_{\ol{\cy}}$, it follows readily that
\[
\Theta_*\big (\wh{\cder}_{L} ^+(\wh{KQ}) \big) +\wh{\fn}^{r+2} = \Phi_* \big (\wh{\cder}_{L} ^+ (\wh{KQ}) \big) +\wh{\fn}^{r+2}\supseteq \wh{\fn}^{r+1}.
\]
Then  Lemma \ref{Nakayama}, the Nakayama lemma,  tells us
\[
\Theta_*\big (\wh{\cder}_{L} ^+(\wh{KQ}) \big) \supseteq \wh{\fn}^{r+1}.
\]
Consequently,
\[
\Theta_{\#} \big (\der_{L} ^+ (\wh{KQ})  \big ) = \pi \big(\Theta_* \big (\wh{\cder}_{L} ^+(\wh{KQ})\big )\big )\supseteq \pi(\wh{\fn}^{r+1}) \ni \Psi-\Phi = \frac{~d\, \Theta}{d~ t}.
\]
Apply Lemma \ref{thm-localtri}, there is an automorphism $H\in \Aut_L(\wh{KQ},\wh{\fn})$ such that $H(\Theta) = \Theta_0 = \Phi$. In particular,
$H_1(\Psi)= H_1(\Theta_1) = H(\Theta)_1 = \Phi $
and so $\Psi$ is right equivalent to $\Phi$.
\end{proof}

\subsection{Proof of the nc Mather-Yau theorem (Theorem \ref{ncMY})}

\begin{proof}[Proof of Theorem \ref{ncMY}]
By Proposition \ref{Jacobi-transform}, it remains to show (1) implies (2).

Let $\gamma: \wh{\Lm}(Q,\Phi)\to \wh{\Lm}(Q,\Psi)$ be an $l$-algebra isomorphism such that $\gamma_*([\Phi]_\Phi)=[\Psi]_\Psi$. We claim that $\gamma$ can be lifted to an $l$-algebra automorphism of $\wh{\CC Q}$.
Denote the image of arrows $a\in Q_1$ in $ \wh{\Lm}(Q,\Phi)$ and $\wh{\Lm}(Q,\Psi)$ both by $\ol{a}$. Clearly, any lifting $h_a\in e_{s(a)} \cdot \CC Q \cdot e_{t(a)}$ of $\gamma(\ol{a})$, $a\in Q_1$, lifts $\gamma$ to an $l$-algebra endomorphism $H: a\mapsto h_a$ of $\wh{\CC Q}$. In other words,  we have a commutative diagram:
\[
\xymatrix{
\wh{\CC Q}\ar[d]\ar[r]^H & \wh{\CC Q}\ar[d]\\
 \wh{\Lm}(Q,\Phi)\ar[r]^\gamma &  \wh{\Lm}(Q,\Psi).
}
\]
Recall that $\wh{\fm}_\Phi \subset \wh{\Lm}(Q,\Phi)$ is defined to be $\wh{\fm}/ \wh{\rJ}(Q, \Phi)$ and similarly for $\wh{\fm}_\Psi\subset \wh{\Lm}(Q,\Psi)$. Because $\Phi$ and $\Psi$ are of order  $\geq3$, there is a canonical isomorphism of $l$-bimodules  $\wh{\fm}/\wh{\fm}^2\cong \wh{\fm}_\Phi/\wh{\fm}_\Phi^2\cong \wh{\fm}_\Psi/\wh{\fm}_\Psi^2$.  Because $\gamma$ induces an isomorphism on $\wh{\fm}_\Phi/\wh{\fm}_\Phi^2\cong \wh{\fm}_\Psi/\wh{\fm}_\Psi^2$, $H$ induces an isomorphism on $\wh{\fm}/\wh{\fm}^2$. By Lemma \ref{inverse}, $H$ is invertible.

By the assumption $\gamma_*([\Phi]_\Phi) = [\Psi]_\Psi$ we have $[H(\Phi)]_\Psi=[\Psi]_\Psi$, and by Lemma \ref{Jacobi-transform} we have
\[
\wh{\rJ}(Q,\Psi) = H(\wh{\rJ}(Q,\Phi)) = \wh{\rJ}(Q,H(\Phi)).
\]
Thus, without lost of generality, we may replace $\Phi$ by $H(\Phi)$ and assume in priori that
\[
\wh{\rJ}(Q,\Phi) = \wh{\rJ}(Q,\Psi) \quad \text{ and }\quad [\Phi]_\Phi=[\Psi]_\Psi.
\]
Let $r$ be the minimal integer so that $\wh{\rJ}(Q,\Phi) \supseteq \wh{\fm}^r$.
By finite determinacy (Theorem \ref{finite-determinacy}), it suffice to show that $\Phi^{(s)}$ and $\Psi^{(s)}$ lie in the same orbit of $\cG^s=\Aut_l(J^s)$ for $s=r+1$. If $\Phi^{(s)}=\Psi^{(s)}$ then there is nothing to proof. So we may assume further that $\Phi^{(s)}\neq \Psi^{(s)}$.

Since $J^s_{\cy}$ is a finite dimensional vector space, it has a natural complex manifold structure. Also,  it is not hard to check that $\mathcal{G}^s$ is a complex Lie group acts analytically on $J^s_\cy$. So the orbit $\mathcal{G}^s\cdot \Xi^{(s)}$ is an immersed submanifold of $J^s_\cy$ for any potential $\Xi\in \wh{\CC Q}_{\ol{\cy}}$. We proceed to calculate $T_{\Xi^{(s)}}(\mathcal{G}^s\cdot \Xi^{(s)})$, the tangent space  of $\mathcal{G}^s\cdot \Xi^{(s)}$  at $\Xi^{(s)}$. Let $\der_l^+(J^s)$ be the space of $l$-derivations of $J^s$ satisfying that  $\delta(\wh{\fm}/ \wh{\fm}^{s+1}) \subseteq \wh{\fm}/ \wh{\fm}^{s+1}$. Clearly, the canonical map $\rho_s: \der_l^+ (\wh{\CC Q}) \to \der_l^+(J^s)$ is surjective. We have a commutative diagram of vector spaces over $\CC$ as following:
\[
\xymatrix{
\wh{\dder}_l^+ (\wh{\CC Q}) \ar@{->>}[r]^-{\wh{\mu}\circ \wh{\tau}\circ-}  \ar@{->>}[d]^-{\wh{\mu}\circ-} &   \wh{\cder}_l^+ (\wh{\CC Q}) \ar[r]^-{\Xi_*} & \wh{\CC Q}\ar[d]^{\pi} \\
\der_l^+ (\wh{\CC Q}) \ar@{->>}[d]^{\rho_s} \ar[rr]^-{\Xi_{\#}} & & \wh{\CC Q}_{\ol{\cy}}  \ar[d]^{q_s} \\ 
\der_l^+(J^s) \ar[rr]^{(\Xi^{(s)})_{\#}} && J^s_\cy,
}  
\]
where  $(\Xi^{(s)})_{\#}$ is constructed in Lemma \ref{cycprop} (1).  Recall that $\der_l^+(J^s)$ is the tangent space of $\mathcal{G}^s$ at the identity map, we have
\[
T_{\Xi^{(s)}}(\mathcal{G}^s\cdot \Xi^{(s)}) = \im ((\Xi^{(s)})_{\#}) =q_s \bigg(\pi \big(\wh{\fm}\cdot \wh{\rJ}(Q,\Xi) + \wh{\rJ}(Q,\Xi)\cdot \wh{\fm} \big)\bigg).
\]

Now consider the complex line $\cL:=\{~ \Theta_t^{(s)}=t\Psi^{(s)}+(1-t)\Phi^{(s)}~|~t\in\CC ~ \}$ contained in $J^s_\cy$. By the assumption that $\wh{\rJ}(Q,\Phi) = \wh{\rJ}(Q,\Psi)$, we have
\[T_{\Psi^{(s)}} (\mathcal{G}^s \cdot \Psi^{(s)}) = T_{\Phi^{(l)}} (\mathcal{G}^s\cdot \Phi^{(s)})=q_s \bigg(\pi \big(\wh{\fm}\cdot \wh{\rJ}(Q,\Phi) + \wh{\rJ}(Q,\Phi) \cdot \wh{\fm} \big)\bigg),\]
as subspaces of $J^s_\cy$.  It follows that for any $t$ the tangent space $T_{\Theta_t^{(s)}} (\mathcal{G}^s\cdot \Theta_t^{(s)})$ is a subspace of  $q_s \bigg(\pi \big(\wh{\fm}\cdot\wh{\rJ}(Q,\Phi) + \wh{\rJ}(Q,\Phi) \cdot \wh{\fm} \big)\bigg)$. Let $\cL_0$ be the subset of $\cL$ consisting of those $\Theta_t^{(s)}$ such that
\[
T_{\Theta_t^{(s)}} (\mathcal{G}\cdot \Theta_t^{(s)}) = q_s \bigg(\pi \big(\wh{\fm}\cdot \wh{\rJ}(Q,\Phi) + \wh{\rJ}(Q,\Phi)\cdot \wh{\fm} \big)\bigg).
\]
Then $\Phi$ and $\Psi$ are  both in $\cL_0$. It remains to show that  $\cL_0$ lies in the orbit  $\cG^s \cdot\Phi^{(s)}$. By a standard lemma in the theory of Lie group (c.f. Lemma 1.1 \cite{Wall}), it suffices to check that
\begin{enumerate}
\item[(1)] The complement $\cL\backslash \cL_0$ is a finite set (so $\cL_0$ is a connected smooth submanifold of $J^s_\cy$);
\item[(2)]  For all $\Theta_t^{(s)}\in \cL_0$, the dimension of   $T_{\Theta_t^{(s)}}(\cG^s \cdot \Theta_t^{(s)})$ are the same;
\item[(3)] For all $\Theta_t^{(s)}\in \cL_0$, the tangent space  $T_{\Theta_t^{(s)}}(\cL_0)$  is contained in $T_{\Theta_t^{(s)}}(\cG^s \cdot \Theta_t^{(s)})$.
\end{enumerate}
Condition (1) holds  because $\cL\backslash \cL_0$ corresponds to the locus of the continuous family
\[\{(\Theta_{t}^{(s)})_{\#}: \der_l^+(J^s) \to J^s_\cy \}_{t\in \CC}\]
 of linear maps between two finite dimensional spaces that have lower rank. Condition $(2)$ follows from the construction of $\cL_0$. Note that the tangent space of $\cL_0$ at each of its point  is spanned by $\Phi^{(s)}-\Psi^{(s)} = q_s(\Phi-\Psi)$ in $J^s_\cy$. By Proposition \ref{Bootstrapping} (2), condition (3) holds if $\Phi-\Psi \in \pi(\wh{\rJ}(Q,\Phi))$,  which is equivalent to the assumption that $[\Phi]_\Phi=[\Psi]_\Psi$.
\end{proof}


\section{A rigidity theorem on complete Ginzburg dg-algebra}\label{sec:CY-Ginzdg}
First we recall the definition of the \emph{complete Ginzburg dg-algebra}  $\hD(Q,\Phi)$ associated to a finite quiver $Q$ and a potential $\Phi\in \wh{kQ}_{\ol{\cy}}$, where $k$ is a fixed field. 
\begin{definition}(Ginzburg)\label{Ginzalg}
Let $Q$ be a finite quiver and $\Phi$ a potential on $Q$. Let $\ol{Q}$ be the graded quiver with the same vertices as $Q$ and whose arrows are 
\begin{enumerate}
\item[$\bullet$] the arrows of $Q$ (of degree 0);
\item[$\bullet$] an arrow $\theta_a:j\to i$ of degree $-1$ for each arrow $a:i\to j$ of $Q$;
\item[$\bullet$] a loop $t_i:i\to i$ of degree $-2$ for each vertex $i$ of $Q$. 
\end{enumerate}
The (complete) Ginzburg (dg)-algebra $\hD(Q,\Phi)$ is the dg $k$-algebra whose underlying graded algebra is the completion (in the category of graded vector spaces) of the graded path algebra $k\ol{Q}$ with respect to the two-sided ideal generated by the arrows of $\ol{Q}$. Its differential is the unique linear endomorphism homogeneous of degree 1 satisfying the Leibniz rule, and which takes the following values on the arrows of $\ol{Q}$:
\begin{enumerate}
\item[$\bullet$] $\fd e_i=0$ for $i\in Q_0$ where $e_i$ is the idempotent associated to $i$;
\item[$\bullet$] $\fd a=0$ for $a\in Q_1$;
\item[$\bullet$] $\fd(\theta_a)=\Phi_*(D_a)$ for  $a\in Q_1$;
\item[$\bullet$] $\fd(t_i)=e_i(\sum_{a\in Q_1}[a,\theta_a])e_i$ for each $i\in Q_0$. 
\end{enumerate}
It follows from Lemma \ref{Poincare} that $\hD(Q,\Phi)$ is a dg-$l$-algebra with $l=kQ_0$. Taking the topology into consideration, $\hD(Q,\Phi)$ is a pseudocompact dg-$l$-algebra in the sense of \cite[Appendix]{KY11}. 
\end{definition}

The degree zero component of   $\hD(Q,\Phi)$ is exactly the complete path algebra $\wh{kQ}$, which is itself a  pseudocompact algebra.  The $(-1)$- component $\hD^{-1}(Q,\Phi)$ consists of formal series of the form
\[
\sum_{a\in Q_1} \sum_{u,v} A_{u,v}^{(i)}~ u\cdot \theta_a \cdot v,~~~~~A_{u,v}^{(i)}\in k,
\]
where $t(u)=t(a)$ and $s(v)=s(a)$. Note that both $\hD^{-1}(Q,\Phi)$ and $\wh{\dder}_l(\wh{kQ})$ are pseudocompact $\wh{kQ}$-bimodules in the sense of \cite[Appendix]{KY11}.
Moreover, we have a commutative diagram
\begin{equation}\label{Ginzburg-superpotential}
\xymatrix{
\hD^{-1}(Q,\Phi) \ar[rr]^-{\fd} \ar[d]^{\cong} & & \wh{kQ} \ar[d]^-{=} \\
\wh{\dder}_l(\wh{kQ}) \ar[r]^-{\wh{\mu} \circ \wh{\tau} \circ- } & \wh{\cder}_l(\wh{kQ}) \ar[r]^-{\Phi_*} & \wh{kQ}.
}
\end{equation}
The map  $\hD^{-1}(Q,\Phi) \xrightarrow{\cong} \wh{\dder}_l(\wh{kQ})$ appears above is the  $\wh{kQ}$-bimodule isomorphism given by
\[
\sum_{a\in Q_1} \sum_{u,v} A_{u,v}^{(i)}~ u\cdot \theta_a \cdot v \mapsto \sum_{a\in Q_1} \sum_{u,v} A_{u,v}^{(i)}~ u * \frac{\partial}{\partial a} * v.
\]
Recall that the notation $*$ denotes the scalar multiplication of the bimodule structure of $\wh{\dder}_l(\wh{kQ})$ induced from the inner bimodule structure of $\wh{kQ}\wh{\ot}\wh{kQ}$.

\begin{lemma}
Let $Q$ be a finite quiver. For any potential $\Phi\in \wh{kQ}_{\ol{\cy}}$, one has
\[\wh{\Lm}(Q, \Phi) = H^0(\hD(Q,\Phi)).\]
\end{lemma}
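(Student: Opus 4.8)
The plan is to compute $H^0$ of the complex underlying $\hD(Q,\Phi)$ directly; granting that $\hD(Q,\Phi)$ really is a dg-algebra (this is built into the definition above and ultimately rests on the Poincar\'e lemma, Lemma~\ref{Poincare}, to guarantee $\fd^2=0$), the computation is essentially forced. First I would record that, by construction, the underlying graded algebra of $\hD(Q,\Phi)$ is concentrated in non-positive degrees, since the arrows of $\ol{Q}$ lie in degrees $0$, $-1$ and $-2$ only. In particular $\hD^1(Q,\Phi)=0$, so that
\[
H^0(\hD(Q,\Phi)) = \hD^0(Q,\Phi)\,/\,\fd\big(\hD^{-1}(Q,\Phi)\big),
\]
and we already know from the definition that $\hD^0(Q,\Phi)=\wh{kQ}$.

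It then remains to identify $\fd\big(\hD^{-1}(Q,\Phi)\big)$ with the Jacobi ideal $\wh{\rJ}(Q,\Phi)$, and this is exactly the content of the commutative diagram~\eqref{Ginzburg-superpotential}. Under the $\wh{kQ}$-bimodule isomorphism $\hD^{-1}(Q,\Phi)\xrightarrow{\cong}\wh{\dder}_l(\wh{kQ})$ sending $u\cdot\theta_a\cdot v$ to $u*\frac{\partial}{\partial a}*v$, the restriction of $\fd$ to $\hD^{-1}(Q,\Phi)$ corresponds to the composite $\Phi_*\circ(\wh{\mu}\circ\wh{\tau}\circ-)$. Since $\wh{\mu}\circ\wh{\tau}\circ-\colon \wh{\dder}_l(\wh{kQ})\twoheadrightarrow\wh{\cder}_l(\wh{kQ})$ is surjective by construction, the image of $\fd$ on degree $-1$ equals $\im(\Phi_*)$, which is precisely $\wh{\rJ}(Q,\Phi)$ by Definition~\ref{superpotential-complete}. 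Combining the two displays gives $H^0(\hD(Q,\Phi))=\wh{kQ}/\wh{\rJ}(Q,\Phi)=\wh{\Lm}(Q,\Phi)$.

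I do not expect any real obstacle here; this is a bookkeeping argument. The one point worth a sentence is the topological one: one wants the image of $\fd$ on the \emph{completed} bimodule $\hD^{-1}(Q,\Phi)$ to be literally $\im(\Phi_*)$ rather than merely its $\wh{\fm}$-adic closure. But $\wh{\rJ}(Q,\Phi)$ is by definition $\im(\Phi_*)$ for $\Phi_*$ on the completed spaces, and both the isomorphism and the surjectivity invoked above already live at that completed level, so the identification is exact; in any case, when $k$ is noetherian this ideal is closed by Lemma~\ref{Jacobi-closed}, so the distinction is moot.
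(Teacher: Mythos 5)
Your proposal is correct and is exactly the argument the paper intends: the paper's proof consists of the single sentence that the lemma is ``a direct consequence of the commutative diagram~(\ref{Ginzburg-superpotential})'', and your computation of $H^0$ as $\wh{kQ}/\fd(\hD^{-1}(Q,\Phi))$ followed by the identification of that image with $\im(\Phi_*)=\wh{\rJ}(Q,\Phi)$ via that diagram is precisely the bookkeeping being left to the reader.
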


\begin{proof}
This is a direct consequence of the commutative diagram (\ref{Ginzburg-superpotential}).
\end{proof}

The main theorem of this section is as follows.

\begin{theorem}\label{rigidity-Ginzburg}
Fix a finite quiver $Q$.  Let $\Phi,\Psi\in \wh{\CC Q}_{\ol{\cy}}$ be two potentials of order $\geq 3$, such that the Jacobi algebras
$\wh{\Lm}(Q,\Phi)$ and $\wh{\Lm}(Q,\Psi)$ are both finite dimensional. Assume there is a $\CC Q_0$-algebra isomorphism  $\gamma: \wh{\Lm}(Q,\Phi)\to\wh{\Lm}(Q,\Psi)$ so that
$\gamma_*([\Phi])=[\Psi]$. Then there exists a dg-$\CC Q_0$-algebra isomorphism
\[
\xymatrix{
\Gamma: \hD(Q,\Phi)\ar[r]^{\cong} &\hD(Q,\Psi)
}
\]
such that $\Gamma(t_i)=t_i$ for all $i\in Q_0$. 
\end{theorem}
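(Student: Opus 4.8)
The plan is to reduce the statement to a functoriality assertion via the noncommutative Mather--Yau theorem. Since the hypotheses here are exactly those of Theorem \ref{ncMY}, that theorem gives that $\Phi$ and $\Psi$ are right equivalent: there is an automorphism $H\in\cG=\Aut_l(\wh{\CC Q},\wh{\fm})$ (where $l=\CC Q_0$) with $H(\Phi)=\Psi$ in $\wh{\CC Q}_{\ol{\cy}}$. This $H$ need not be related to $\gamma$, but that is harmless: it remains only to show that \emph{any} right equivalence of potentials lifts to a dg-$l$-algebra isomorphism of the associated complete Ginzburg dg-algebras carrying each $t_i$ to $t_i$.

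To construct $\Gamma\colon\hD(Q,\Phi)\to\hD(Q,\Psi)$ I use that both dg-algebras have the \emph{same} underlying pseudocompact graded algebra (the completion of $k\ol{Q}$), differing only in the differential. A continuous $l$-algebra homomorphism out of that graded path algebra is determined by its values on the arrows of $\ol{Q}$, so I set $\Gamma(e_i)=e_i$, $\Gamma(a)=H(a)$ for $a\in Q_1$, $\Gamma(t_i)=t_i$, and, writing $g_c:=H^{-1}(c)$ and using the bimodule isomorphism $\hD^{-1}(Q,\Psi)\cong\wh{\dder}_l(\wh{\CC Q})$ of \eqref{Ginzburg-superpotential},
\[
\Gamma(\theta_a):=\sum_{c\in Q_1} H\!\left(\left(\frac{\partial g_c}{\partial a}\right)^{\pp}\right)\cdot\theta_c\cdot H\!\left(\left(\frac{\partial g_c}{\partial a}\right)^{\p}\right),\qquad a\in Q_1 .
\]
Under the isomorphism \eqref{Ginzburg-superpotential} this $\Gamma(\theta_a)$ corresponds to the double derivation $\delta_a:=(H\wh{\ot}H)\circ\frac{\partial}{\partial a}\circ H^{-1}$, and the displayed formula is just the rewriting of $\delta_a$ obtained by applying the chain rule (Lemma \ref{chain-rule}) to $H^{-1}$; its well-definedness and continuity follow from those of $H\wh{\ot}H$. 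This choice is essentially forced: $\delta_a$ is the unique double derivation for which $\wh{\mu}\circ\wh{\tau}\circ\delta_a$ carries the representative $\psi=H(\phi)$ of $\Psi$ to $H(\Phi_*(D_a))$. Extending $\Gamma$ multiplicatively and continuously gives a morphism of pseudocompact graded $l$-algebras with $\Gamma(t_i)=t_i$.

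Next I would check $\Gamma\circ\fd=\fd\circ\Gamma$ on the generators of $\ol{Q}$. On $e_i$ and on $a\in Q_1$ both sides vanish. On $\theta_a$: using \eqref{Ginzburg-superpotential} for $\Psi$ and the identity $\wh{\mu}\circ\wh{\tau}\circ(H\wh{\ot}H)=H\circ\wh{\mu}\circ\wh{\tau}$ from the proof of Lemma \ref{chain-rule}, one gets $\fd\Gamma(\theta_a)=\wh{\mu}\wh{\tau}\big((H\wh{\ot}H)(\tfrac{\partial\phi}{\partial a})\big)=H\big(\wh{\mu}\wh{\tau}(\tfrac{\partial\phi}{\partial a})\big)=H(D_a\phi)=\Gamma(\fd\theta_a)$, which is precisely the defining property of $\delta_a$. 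On $t_i$: since $\Gamma$ is an algebra map with $\Gamma(e_i)=e_i$, one must verify $e_i\big(\sum_{a}[H(a),\Gamma(\theta_a)]\big)e_i=e_i\big(\sum_{a}[a,\theta_a]\big)e_i$ in $\hD^{-1}(Q,\Psi)$ for all $i$; summing over $i$ and transporting along $\hD^{-1}(Q,\Psi)\cong\wh{\dder}_l(\wh{\CC Q})$, this becomes the identity of double derivations $\sum_{a}\big(H(a)*\delta_a-\delta_a*H(a)\big)=\sum_{a}\big(a*\tfrac{\partial}{\partial a}-\tfrac{\partial}{\partial a}*a\big)$. The right-hand side is the universal double derivation $f\mapsto 1\ot f-f\ot 1$, by the telescoping computation in the proof of Lemma \ref{Poincare}; evaluating the left-hand side at $g\in\wh{\CC Q}$, writing $f=H^{-1}(g)$ and using $\delta_a=(H\wh{\ot}H)\tfrac{\partial}{\partial a}H^{-1}$ together with the same telescoping identity pushed through $H\wh{\ot}H$, one gets $(H\wh{\ot}H)(1\ot f-f\ot 1)=1\ot g-g\ot 1$. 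Hence the identity holds, so $\Gamma$ is a dg-algebra morphism with $\Gamma(t_i)=t_i$.

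Finally, $\Gamma$ is invertible: it preserves the arrow ideal $\ol{\fm}$ of the completed $k\ol{Q}$, and modulo $\ol{\fm}^{2}$ it is block-diagonal along the internal degrees $0,-1,-2$, with blocks the invertible linear part of $H$, the invertible linear part of $H^{-1}$ (acting on the span of the $\theta_a$), and the identity (on the span of the $t_i$), respectively; so $\Gamma$ induces an automorphism of $\ol{\fm}/\ol{\fm}^{2}$, and by the graded analogue of the inverse function theorem (Lemma \ref{inverse}) it is an isomorphism of graded algebras, hence the desired dg-$l$-algebra isomorphism with $\Gamma(t_i)=t_i$. I expect the main obstacle to be the degree $-2$ part of the chain-map check: one must pin down $\Gamma(\theta_a)$ as the \emph{chain-rule lift} of $H(\Phi_*(D_a))$ along $\fd$ rather than an arbitrary lift, since an arbitrary lift preserves $\fd\theta_a$ but need not preserve the relation $\fd t_i=e_i(\sum_a[a,\theta_a])e_i$; making this compatibility transparent is exactly where the Poincar\'e-lemma computation (Lemma \ref{Poincare}) is invoked. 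Everything else is bookkeeping with pseudocompact (graded) modules and the chain rule.
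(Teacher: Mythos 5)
Your proposal is correct and follows essentially the same route as the paper: invoke the noncommutative Mather--Yau theorem to produce $H$ with $H(\Phi)=\Psi$, define $\Gamma(\theta_a)$ by the same chain-rule formula (your repackaging of it as the conjugated double derivation $(H\wh{\ot}H)\circ\frac{\partial}{\partial a}\circ H^{-1}$ is exactly what makes the paper's verifications on $\theta_a$ and $t_i$ go through, including the telescoping identity from the Poincar\'e lemma). The only divergence is at the last step, where the paper writes down $\Gamma^{-1}$ explicitly and checks it via the chain rule, whereas you deduce invertibility from the induced automorphism of $\ol{\fm}/\ol{\fm}^{2}$ and a graded version of Lemma \ref{inverse}; that shortcut is legitimate (the proof of Lemma \ref{inverse} carries over verbatim to the completed graded path algebra of $\ol{Q}$, and the inverse of a bijective dg-morphism is automatically a dg-morphism), though you should state the graded analogue explicitly rather than only gesture at it.
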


\begin{proof}
Let $l=\CC Q_0$. By the nc Mather-Yau theorem (Theorem \ref{ncMY}), there exists a $l$-algebra automorphism $H$ of $\wh{\CC Q}$ such that $H(\Phi)=\Psi$ in $\wh{\CC Q}_{\ol{\cy}}$. Choose such an automorphism $H$  so that  $a\mapsto h_a$, and let $H^{-1}: a\mapsto h_a^{-1}$ be its inverse. {\bf{Warning}:} $h_a^{-1}$ refers to the component of the automorphism $H^{-1}$ that corresponds to $a$, instead of the inverse of $h_a$.
Define a dg-algebra homomorphism $\Gamma: \hD(Q,\Phi)\to \hD(Q,\Psi)$ by
\[
\Gamma: e_i\mapsto e_i,~~~ a\mapsto h_a, ~~~ \theta_a\mapsto \sum_{\beta\in Q_1} H\bigg(\big(\frac{\partial h_\beta^{-1}}{\partial a}\big)^\pp\bigg) \cdot \theta_\beta\cdot H\bigg(\big(\frac{\partial h_\beta^{-1}}{\partial a}\big)^\p\bigg),~~~ t_i\mapsto t_i.
\]
We need to check that $\Gamma$ is compatible with $\fd$. The above assignment defines a morphism of dg-algebras if and only if the following equalities hold:
\begin{eqnarray*}
\Gamma \big(\fd(\theta_a) \big ) &=&\fd \big(\Gamma(\theta_a) \big),  \quad \quad a\in Q_1;\\
\Gamma\big( \fd(t_i) \big)&=&\fd \big( \Gamma(t_i) \big),  \quad \quad i\in Q_0.
\end{eqnarray*}
We verify the first equality (using chain rule).
\begin{align*}
\Gamma \big(\fd(\theta_a) \big )&=\Gamma \big(\Phi_*(D_a)\big)=\Gamma \big( H^{-1}(\Psi)_*(D_a) \big)\\
&=H\bigg(\sum_{\beta \in Q_1}\big(\frac{\partial h_\beta^{-1}}{\partial a}\big)^\pp\cdot H^{-1}\big(\Psi_*(D_\beta)\big) \cdot \big(\frac{\partial h_\beta^{-1}}{\partial a}\big)^\p\bigg)\\
&=\sum_{\beta \in Q_1} H\bigg(\big(\frac{\partial h_\beta^{-1}}{\partial a}\big)^\pp\bigg) \cdot \Psi_*(D_\beta) \cdot H\bigg(\big(\frac{\partial h_\beta^{-1}}{\partial a}\big)^\p\bigg)\\
&=\fd \big(\Gamma(\theta_a) \big)
\end{align*}
By the canonical identification $\hD^{-1}(Q,\Phi) \cong \wh{\dder}_l(\wh{\CC Q})$, to verify the second equality, it suffices to show the following equality holds for any $a$.
\begin{align*}
a *\frac{\partial}{\partial a}-\frac{\partial}{\partial a}* a ~~=~~&\sum_{\beta\in Q_1} h_\beta  *H\bigg(\big(\frac{\partial h_a^{-1}}{\partial \beta}\big)^\pp\bigg)* \frac{\partial}{\partial a} *H\bigg(\big(\frac{\partial h_a^{-1}}{\partial \beta}\big)^\p\bigg) \\  &- H\bigg(\big(\frac{\partial h_a^{-1}}{\partial \beta}\big)^\pp\bigg)* \frac{\partial}{\partial a}*H\bigg(\big(\frac{\partial h_a^{-1}}{\partial \beta}\big)^\p\bigg)* h_\beta.
\end{align*}
It is enough to check the application of both sides on $b\in Q_1$. For $b\neq a$, both sides equal to zero. The $b=a$ case reduces to the equality
\[
e_{s(a)}\ot a-a\ot e_{t(a)}=\sum_{\beta\in Q_1}   H\bigg(\big(\frac{\partial h_a^{-1}}{\partial \beta}\big)^\p\bigg)\ot h_\beta H\bigg(\big(\frac{\partial h_a^{-1}}{\partial \beta}\big)^\pp\bigg) - H\bigg(\big(\frac{\partial h_a^{-1}}{\partial \beta}\big)^\p\bigg)h_\beta \ot H\bigg(\big(\frac{\partial h_a^{-1}}{\partial \beta}\big)^\pp\bigg)
\]
Applying $H^{-1}\wh{\ot} H^{-1}$ to both sides of the equation, it is equivalent to verify the identity
\[
e_{s(a)}\ot h^{-1}_a-h^{-1}_a\ot e_{t(a)}=\sum_{\beta\in Q_1}  \big(\frac{\partial h_a^{-1}}{\partial \beta}\big)^\p\ot \beta\big(\frac{\partial h_a^{-1}}{\partial \beta}\big)^\pp- \big(\frac{\partial h_a^{-1}}{\partial \beta}\big)^\p \beta \ot \big(\frac{\partial h_a^{-1}}{\partial \beta}\big)^\pp.
\]
We claim this holds for arbitrary path $w = a_1\ldots a_r$ with $s(w)=s(a)$ and $t(w)=t(a)$, and therefore holds in general.  Indeed, we have
\[
e_{s(a)}\ot w - w\ot e_{t(a)} =\sum_{\beta\in Q_1}\sum_{a_s=\beta} \big( a_1\ldots a_{s-1} \ot a_{s}  a_{s+1} \ldots a_r -a_1\ldots a_{s-1} a_s \ot a_{s+1} \ldots a_r \big).
\]
Since
\[
\frac{\partial w}{\partial \beta}=\sum_{a_s=\beta}a_1\ldots a_{s-1} \ot a_{s+1} \ldots a_r,
\]
the desired identity follows.

Similarly, we may define a dg-morphism
\[
\Gamma^{-1}: e_i\mapsto e_i, ~~~ b\mapsto h^{-1}_b, ~~~ \theta_b\mapsto \sum_{\alpha\in Q_1} H^{-1}\bigg(\big(\frac{\partial h_\alpha}{\partial b}\big)^\pp\bigg)\cdot \theta_\alpha \cdot H^{-1}\bigg(\big(\frac{\partial h_\alpha}{\partial b}\big)^\p\bigg),~~~ t_i\mapsto t_i.
\]
Apply the canonical identification $\hD^{-1}(Q,\Phi) \cong \wh{\dder}_l(\wh{\CC Q})$ again, to prove $\Gamma^{-1}$ is the inverse of $\Gamma$, it suffices to check the identity
\[
\frac{\partial}{\partial b}=\sum_{\alpha\in Q_1}\sum_{\beta\in Q_1} \big(\frac{\partial h_\alpha^{-1}}{\partial b}\big)^\pp *H^{-1}\bigg(\big(\frac{\partial h_\beta}{\partial b}\big)^\pp\bigg) * \frac{\partial}{\partial \beta} *H^{-1}\bigg(\big(\frac{\partial h_\beta}{\partial b}\big)^\p\bigg)* \big(\frac{\partial h_\alpha^{-1}}{\partial b}\big)^\p
\]
It suffices to check the application of both sides on all $a$ such that $s(a)=s(b)$ and $t(a)=t(b)$. Then we get
\begin{align*}
\sum_{\alpha\in Q_1} H^{-1}\bigg(\big(\frac{\partial h_a}{\partial b}\big)^\p\bigg) \big(\frac{\partial h_\alpha^{-1}}{\partial b}\big)^\p\ot \big(\frac{\partial h_\alpha^{-1}}{\partial b}\big)^\pp H^{-1}\bigg(\big(\frac{\partial h_a}{\partial b}\big)^\pp\bigg) &=\frac{\partial\big(H^{-1}(h_a)\big)}{\partial b}\\
&=\delta_{a,b}e_{s(a)}\ot e_{t(a)}.
\end{align*}
The equality follows by the chain rule.
The desired identity follows.
\end{proof}

\begin{remark}
The condition $\Gamma(t_i)=t_i$ in Theorem \ref{rigidity-Ginzburg} can be interpreted as a volume-preserving condition in noncommutative geometry.
\end{remark}

\begin{remark}
When we finished our proof of Theorem \ref{rigidity-Ginzburg}, we learned that Keller and Yang has already got the fact that every $l$-algebra automorphism  $H$ of $\wh{kQ}$, which transform $\Phi$ to $\Psi$, can be extended to a dg-$l$-algebra isomorphism  $\hD(Q, \Phi) \xrightarrow{\cong} \hD(Q, \Psi)$, see \cite[Lemma 2.9]{KY11}. From this fact and the nc Mather-Yau theorem, one immediately obtain Theorem \ref{rigidity-Ginzburg} as well. However, we retain our proof in full detail for completeness and reader's convenience.
\end{remark}

The correct setup to discuss the complete Ginzburg dg-algebra is to use the language of pseudocompact dg-algebras and derived categories. We now state several definitions and results due to Keller and Van den Bergh. For simplicity, we will omit the definitions of pseudocompact dg-algebras and derived categories. The interested readers can find the details in the Appendix of \cite{KY11} and a generalization in Section 6 of \cite{VdB15}.

Let $l$ be a finite dimensional separable $k$-algebra over a field $k$.
Let $A$ be a pseudocompact dg-$l$-algebra. Denote the pseudocompact derived category of $A$ by $\D_{pc}(A)$. Define the \emph{perfect derived category} $\per_{pc}(A)$ to be the thick subcategory of $\D_{pc}(A)$ generated by the free $A$-module of rank 1. Define the \emph{finite-dimensional derived category} $\D_{fd,pc}(A)$ to be the full subcategory whose objects are the pseudocompact dg-modules $M$ such that
$\Hom_{D_{pc}(A)}(P,M)$ is finite dimensional for each perfect $P$. We say that $A$ is \emph{topologically homologically smooth} if the module $A$ considered as a pseudocompact dg-module over $A^e:=A \wh{\ot} A^{op}$ is quasi-isomorphic to a strictly perfect dg-module (See \cite[Appendix A.11]{KY11} for the definition).

Let $d$ be an integer. For an object $L$ of $\D_{pc}(A^e)$, define $L^\# =\RHom_{A^e}(L,A^e [d])$.
The dg-algebra $A$ is \emph{topologically bimodule $d$-Calabi-Yau} if there is an isomorphism
\[
A \xrightarrow{\cong} A^\#
\]
in $\D_{pc}(A^e)$. In this case, the category $\D_{fd,pc} (A)$
is $d$-Calabi-Yau as a triangulated category. If a  pseudocompact dg-algebra $A$ is topologically homologically smooth and topological $d$-Calabi-Yau as a bimodule, we say $A$ is a \emph{topological  $d$-Calabi-Yau algebra}. In this case, $\D_{fd,pc}(A)$ is a full subcategory of $\per_{pc}(A)$ (Proposition A.14 (c) \cite{KY11}).

Note that  if a pseudocompact dg-algebra $A$ is topologically $d$-Calabi-Yau and has cohomology concentrating in degree $0$ then $H^0(A)$ is also topologically $d$-Calabi-Yau as a pseudocompact algebra (see \cite[Proposition A.14 (e) ]{KY11}). Here, the  topological structure on $H^0(A)$ is inherited from $A$. If $A$ is algebraic (non-topological), then the notion of homologically smooth, bimodule Calabi-Yau and $d$-Calabi-Yau algebra can be defined similarly but with the pseudocompact derived category $\D_{pc}$ replaced by the algebraic derived category.

\begin{theorem}(\cite[Theorem A.17]{KY11})\label{GinzCY}
Complete Ginzburg dg-algebras are topologically $3$-Calabi-Yau.
\end{theorem}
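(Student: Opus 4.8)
This is the theorem of Keller and Yang \cite[Theorem A.17]{KY11}, refining Ginzburg's original statement and Van den Bergh's geometric argument in the Appendix of \cite{KV09}; what follows is how I would organize a direct proof. Write $A:=\hD(Q,\Phi)$, $l=kQ_0$ and $A^e:=A\wh{\ot}A^{op}$. By the definitions recalled above we must establish two things: that $A$ is quasi-isomorphic, as a pseudocompact dg $A^e$-module, to a strictly perfect one (topological homological smoothness), and that there is an isomorphism $A\xrightarrow{\cong}A^\#:=\RHom_{A^e}(A,A^e)[3]$ in $\D_{pc}(A^e)$ (the topological bimodule $3$-Calabi-Yau property). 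The plan is to deduce both from a single object: the Koszul--Tate (``cotangent'') bimodule resolution of $A$ attached to the generators of the graded quiver $\ol{Q}$.

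First I would write down the resolution. As a graded algebra $A$ is the pseudocompact completed tensor $l$-algebra $\widehat{T}_l(V\oplus V^\vee\oplus l[2])$, where $V=\bigoplus_{a\in Q_1}ka$ has its generators in degree $0$, its bimodule dual $V^\vee=\bigoplus_{a\in Q_1}k\theta_a$ has its generators in degree $-1$, and $l[2]=\bigoplus_{i\in Q_0}kt_i$ has its generators in degree $-2$. The resolution is the finite complex of free pseudocompact $A^e$-modules
\[
P_\bullet:\qquad A\wh{\ot}_l A \ \xleftarrow{\ d_1\ }\ A\wh{\ot}_l V\wh{\ot}_l A \ \xleftarrow{\ d_2\ }\ A\wh{\ot}_l V^\vee\wh{\ot}_l A \ \xleftarrow{\ d_3\ }\ A\wh{\ot}_l A,
\]
augmented over $A$ by the multiplication $\mu$, where each term additionally carries the internal differential $\fd$ and a shift reflecting the degrees $0,0,-1,-2$ of the corresponding generators of $\ol{Q}$ (so that $P_\bullet$ is a cofibrant resolution of the dg-bimodule $A$). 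The horizontal maps are $d_1\colon 1\ot a\ot 1\mapsto a\ot 1-1\ot a$, the universal $l$-derivation; $d_2\colon 1\ot\theta_a\ot 1\mapsto$ the double-derivative of $\fd\theta_a=\Phi_*(D_a)$, i.e. the map read off Diagram (\ref{Ginzburg-superpotential}); and $d_3\colon e_i\ot e_i\mapsto$ the bimodule expression of $\fd t_i=e_i\big(\sum_{a}[a,\theta_a]\big)e_i$ obtained by differentiating in the $\theta_a$. Each $P_p$ is a finite direct sum of modules $Ae_i\wh{\ot}_l e_jA$, hence finitely generated projective over $A^e$; so the moment $P_\bullet$ is shown to resolve $A$, topological homological smoothness follows.

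Next I would establish the Calabi--Yau duality. The decisive structural feature is the \emph{self-duality of $P_\bullet$ up to the shift by $3$}: the functor $(-)^\vee=\RHom_{A^e}(-,A^e)$ carries $Ae_i\wh{\ot}_l e_jA$ to $Ae_j\wh{\ot}_l e_iA$, it interchanges the two middle generating bimodules $V$ and $V^\vee$, it fixes the two outer copies of $l$, and it reverses the four positions of the complex; hence $P_\bullet^\vee[3]$ is again a complex of exactly the same shape, and one can write down an explicit chain isomorphism $P_\bullet\xrightarrow{\cong}P_\bullet^\vee[3]$ lifting $\id_A$. Verifying that it intertwines the differentials $d_\bullet$ and $\fd$ is exactly where the cyclic symmetry of $\Phi$ — equivalently of the Ginzburg super-potential $\Phi+\sum_{a}[a,\theta_a]$ — is used: it reduces to the chain rule (Lemma \ref{chain-rule}) together with the identity $\sum_{a\in Q_1}[a,D_a\phi]=0$ for cycles $\phi$, which is the ``if'' direction of the Poincar\'e lemma (Lemma \ref{Poincare}). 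Since $P_\bullet$ represents $A$ in $\D_{pc}(A^e)$, this yields $A^\#\cong P_\bullet^\vee[3]\cong P_\bullet\cong A$; combined with smoothness, $A$ is a topological $3$-Calabi-Yau algebra.

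The step I expect to be the main obstacle is showing that $P_\bullet$ genuinely resolves $A$, i.e. that the augmented total complex — built from the horizontal $d_\bullet$ and the internal $\fd$ — is acyclic as a complex of pseudocompact $A^e$-modules. This is the Koszul--Tate computation, and the cleanest route is to filter $A$ and the $P_p$ by powers of $\wh{\fm}$: on the associated graded algebra the Ginzburg differential degenerates to its ``quadratic'' part (here one uses $\Phi_*(D_a)\in\wh{\fm}^2$), the claim collapses to the classical two-term bimodule resolution of the completed tensor algebra $\widehat{T}_l(V\oplus V^\vee\oplus l[2])$ combined with the acyclicity of the Koszul complex of the tautological pairing $a\leftrightarrow\theta_a$, $t_i\leftrightarrow e_i$, and then one lifts acyclicity back to $A$ by a completeness/Mittag-Leffler argument of the type used in the proof of Lemma \ref{Jacobi-closed}. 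A second point needing care is that every tensor product, $\RHom$ and dual above must be taken in the pseudocompact category (cf. the Appendix of \cite{KY11}), so that $P_\bullet$ is strictly perfect as a pseudocompact $A^e$-module and $(-)^\vee$ behaves as asserted — it is precisely this bookkeeping that makes the conclusion the \emph{topological} Calabi-Yau property and not merely its algebraic shadow.
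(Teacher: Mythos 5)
The paper does not actually prove this statement: Theorem \ref{GinzCY} is imported verbatim from Keller--Yang \cite[Theorem A.17]{KY11}, so there is no internal argument to measure yours against. Your sketch is, in substance, the standard proof from the cited sources (Ginzburg's \S 5, Van den Bergh's appendix to \cite{KV09}, and \cite{KY11}): exhibit the bimodule resolution attached to the generators of $\ol{Q}$, observe its self-duality under $\RHom_{A^e}(-,A^e)[3]$, and check that the comparison map is a chain map using the cyclic invariance of $\Phi$ (your reduction to the chain rule and to $\sum_a[a,D_a\phi]=0$ is exactly where that invariance enters). One correction of emphasis: the step you single out as the main obstacle --- acyclicity of $P_\bullet$ --- is easier than you anticipate. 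Your four-term complex is just a regrading of the two-term cofibrant resolution $0\to A\wh{\ot}_l\ol{V}\wh{\ot}_l A\to A\wh{\ot}_l A\to A\to 0$ of the completed tensor algebra $A=\wh{T}_l\ol{V}$ with $\ol{V}=V\oplus V^\vee[1]\oplus l[2]$ (decomposing $\ol{V}$ into its three graded pieces spreads the middle term over homological degrees $1,2,3$); exactness of that sequence is a purely graded statement about (pseudocompact) tensor algebras, valid before one deforms the differential, so no $\wh{\fm}$-adic filtration or Mittag--Leffler argument is needed. The genuinely delicate points are the ones you flag second: the signs and degree bookkeeping in the self-duality $\ol{V}^\vee[2]\cong\ol{V}$, and keeping every $\wh{\ot}$ and $\RHom$ in the pseudocompact category so that the conclusion is the topological Calabi--Yau property. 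With those caveats your outline is correct and is the intended proof.
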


Assume a pseudocompact dg-$l$-algebra $A$ satisfies the following additional conditions: 
\begin{enumerate}
\item[$(1)$] A is topologically $3$-Calabi-Yau,
\item[$(2)$] for each $p>0$, the space $H^p(A)$ vanishes,
\item[$(3)$] the algebra $H^0(A)$ is finite-dimensional over $k$.
\end{enumerate}
The \emph{topological generalized cluster category} of $A$ is defined to be the Verdier quotient 
$$\per_{pc}(A)/\D_{fd,pc}(A).$$ 
For a non-topological dg-$l$-algebra $A$,  which is  (non-topological) $3$-Calabi-Yau and satisfies conditions (2) and (3) above, the \emph{algebraic generalized cluster category} $\per(A)/\D_{fd}(A)$ was first studied by Amiot (see Theorem 2.1 \cite{Am}).

As an immediate consequence of  Theorem \ref{rigidity-Ginzburg}, we have

\begin{corollary}\label{clustercat}
Fix a finite quiver $Q$.  Let $\Phi,\Psi\in \wh{\CC Q}_{\ol{\cy}}$ be two potentials of order $\geq 3$, such that the Jacobi algebras
$\wh{\Lm}(Q,\Phi)$ and $\wh{\Lm}(Q,\Psi)$ are both finite dimensional. Assume there is an $\CC Q$-algebra isomorphism  $\gamma: \wh{\Lm}(Q,\Phi)\to\wh{\Lm}(Q,\Psi)$ so that
$\gamma_*([\Phi])=[\Psi]$. Then the topological generalized cluster categories of $\hD(Q,\Phi)$ and of $\hD(Q,\Psi)$ are triangle equivalent. \hfill $\Box$
\end{corollary}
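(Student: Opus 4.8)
The plan is to deduce this directly and formally from Theorem \ref{rigidity-Ginzburg}: that theorem already supplies a dg-$\CC Q_0$-algebra isomorphism $\Gamma\colon \hD(Q,\Phi)\xrightarrow{\cong}\hD(Q,\Psi)$, and since the topological generalized cluster category is manufactured out of the dg-algebra by purely categorical operations, such an isomorphism transports it. So the only real content is bookkeeping.

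First I would verify that the topological generalized cluster category is actually defined for both $\hD(Q,\Phi)$ and $\hD(Q,\Psi)$, i.e. that the three conditions listed just before its definition hold. Condition (1), topological $3$-Calabi-Yauness, is exactly Theorem \ref{GinzCY}. Condition (2), the vanishing of $H^p$ for $p>0$, is automatic because the underlying graded algebra of a complete Ginzburg dg-algebra is concentrated in degrees $0,-1,-2$, so $\hD(Q,\Phi)^p=0$ for $p>0$ a fortiori $H^p(\hD(Q,\Phi))=0$. Condition (3), finite-dimensionality of $H^0(\hD(Q,\Phi))=\wh{\Lm}(Q,\Phi)$, is part of the hypothesis of the corollary, and likewise for $\Psi$.

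Next I would use $\Gamma$ to induce, by restriction of scalars, a triangle equivalence of pseudocompact derived categories $\D_{pc}(\hD(Q,\Psi))\xrightarrow{\cong}\D_{pc}(\hD(Q,\Phi))$; this is exact and preserves quasi-isomorphisms because $\Gamma$ is a (continuous, hence pseudocompact) isomorphism. This functor carries the rank-one free module to the rank-one free module, hence restricts to an equivalence on the perfect subcategories $\per_{pc}$; and, being an equivalence, it preserves finite-dimensionality of all $\Hom$-spaces against perfect objects, hence restricts to an equivalence on $\D_{fd,pc}$. Passing to Verdier quotients then yields the asserted triangle equivalence
\[
\per_{pc}(\hD(Q,\Phi))/\D_{fd,pc}(\hD(Q,\Phi)) \;\cong\; \per_{pc}(\hD(Q,\Psi))/\D_{fd,pc}(\hD(Q,\Psi)).
\]

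I do not expect a genuine obstacle here; the one place to be careful is that all categories and functors live in the pseudocompact/topological framework of \cite[Appendix]{KY11}, so one should confirm that $\Gamma$ is a morphism of \emph{pseudocompact} dg-algebras (it is, since it is built from a continuous $l$-algebra automorphism $H$ of $\wh{\CC Q}$ together with the chain-rule formula for $\theta_a$, all of which respect the adic topologies) and that the Verdier quotient defining the topological cluster category is formed in that same setting. With that in place the corollary follows at once, without any further computation. (Note that the stronger normalization $\Gamma(t_i)=t_i$ from Theorem \ref{rigidity-Ginzburg} is not needed here; any dg-algebra isomorphism suffices.)
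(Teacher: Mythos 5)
Your proposal is correct and follows exactly the route the paper intends: the paper states the corollary as an immediate consequence of Theorem \ref{rigidity-Ginzburg} with no written proof, and your argument simply fills in the routine bookkeeping (verifying conditions (1)--(3) via Theorem \ref{GinzCY}, the non-positive grading, and the finite-dimensionality hypothesis, then transporting $\per_{pc}$, $\D_{fd,pc}$, and the Verdier quotient along the dg-isomorphism $\Gamma$). Your parenthetical observation that the normalization $\Gamma(t_i)=t_i$ is not needed here is also accurate.
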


\begin{remark}
Let $C$ be a contractible rational curve in a smooth quasi-projective Calabi-Yau threefold $Y$. The derived noncommutative deformation of $\cO_C$ is represented by a Ginzburg algebra $\wh{\fD}(Q,\Phi)$, and the underived deformation is represented by the Jacobi algebra $\wh{\Lm}(Q,\Phi)$.  Its generalized cluster category is equivalent to the derived category of singularity for contraction of $Y$. Corollary \ref{clustercat} is used in \cite{HuaKeller} to prove that the Jacobi algebra $\wh{\Lm}(Q,\Phi)$ together with the class $[\Phi]$ classifies all three dimensional flops.
\end{remark}


\begin{thebibliography}{XXXX}
\bibitem{Am} C. Amiot, \emph{Cluster categories for algebras of global dimension 2 and quivers with potential.} (English, French summary)
Ann. Inst. Fourier (Grenoble) 59, no. 6 (2009): 2525--2590.
\bibitem{BY90} M. Benson, S. S.-S. Yau, \emph{Equivalences between isolated hypersurface singularities}, Math. Ann. 287 (1990): 107-134.
\bibitem{Berg} G. Bergman, \emph{The diamond  lemma for ring theory}, Adv.
Math. 29, no. 2 (1979): 178-218.
\bibitem{BCFHR} K. Behrend, I. Ciocan-Fontanine, J. Hwang, M. Rose, \emph{The derived moduli space of stable sheaves}, Algebra Number Theory 8, no. 4 (2014): 781-812.
\bibitem{CLbook} E. A. Coddington, L. Norman, \emph{Theory of ordinary differential equations}, New York, Toronto, London: McGill-Hill Book Company." Inc. XII (1955).
\bibitem{Da} B. Davison, \emph{Superpotential algebras and manifolds.} Advances in Mathematics 231, no. 2 (2012): 879-912.
\bibitem{DWZ} H. Derksen, J. Weyman,  A. Zelevinsky, \emph{Quivers with potentials and their representations I: Mutations}, Selecta Math. 14 (2008): 59-119.
\bibitem{GH85} T. Gaffney, H. Hauser, \emph{Characterizing singularities of varieties and of mappings.} Inventiones mathematicae 81, no. 3 (1985): 427--447.
\bibitem{Ginz} V. Ginzburg, \emph{Calabi-Yau algebras}, preprint, arXiv: 0612139v3 (2007).
\bibitem{GP} G.-M. Greuel, T. H. Pham, \emph{Mather-Yau theorem in positive characteristic}, J. Algebraic Geom. 26 (2017): 347-355.
\bibitem{HuaKeller} Hua, Zheng, and Bernhard Keller. \emph{Cluster categories and rational curves} arXiv preprint arXiv:1810.00749 (2018).
\bibitem{HuaZhou} Hua, Zheng, and Gui-song Zhou. \emph{Quasi-homogeneity of superpotentials} arXiv preprint arXiv:1808.03754 (2018).
\bibitem{KeICM} Keller, Bernhard. \emph{On differential graded categories.} In Proceedings of the International Congress of Mathematicians Madrid, August 22–30, 2006, pp. 151-190. 2007.
\bibitem{KV09} B. Keller, \emph{Deformed Calabi-Yau completions}, J. Reine Angew. Math. 654 (2011): 125-180. With an appendix by Michel Van den Bergh.
\bibitem{KY11} B. Keller, D. Yang, \emph{Derived equivalences from mutations of quivers with potentials}, Adv. Math. 226 (2011): 2118-2168.
\bibitem{Lam} T. Y. Lam, \emph{A first course in noncommutative rings}, Graduate studies in mathematics, Vol. 131, Springer-Verlage (1991).
\bibitem{Ma68} J. N. Mather, \emph{Stability of $C^{\infty}$ mappings, III: Finitely determined map-germs}, Publications Mathématiques de l'Institut des Hautes Études Scientifiques 35, no. 1 (1968): 127-156.
\bibitem{Ma69} J. N. Mather, \emph{Stability of $ C^\infty $ mappings, IV: Classification of stable germs by $ R $-algebras}, Publications Mathématiques de l'IHÉS 37 (1969): 223-248.
\bibitem{MY} J. N. Mather, S. S.-T. Yau, \emph{Classification of isolated hypersurfaces singularitiesby by their moduli algebras}, Invent. Math. 69 (1982): 243-251.
\bibitem{RRS} G.-C. Rota, B. Sagan, P. R. Stein, \emph{A cyclic derivative in noncommutative algebra}, J. Alg. 64 (1980): 54-75.
\bibitem{Sait} K. Saito, \emph{Quasihomogene isolierte Singularit{\"a}ten von Hyperfl{\"a}chen},  Invent. Math. 14 (1971): 123-142.
\bibitem{Stack} The Stacks Project Authors,  \emph{The Stacks project},  http://stacks.math.columbia.edu (2018).
\bibitem{VdB15} M. Van den Bergh,  \emph{Calabi-Yau algebras and superpotentials}, Selecta Mathematica 21 (2015): 555-603.
\bibitem{Wall} C. T. C. Wall, \emph{Finite determinacy of smooth map-germs}, Bull. London Math. Soc. 13 (1981): 481-539.
\bibitem{Yau84} S. S.-T. Yau, \emph{Criterions for right-left equivalence and right equivalence of holomorphic functions with isolated critical points}, Proc. Symp. Pure Math. 41 (1984): 291-297.
\end{thebibliography}
\end{document}